\pdfoutput=1
\documentclass[12pt]{amsart}
\usepackage[english]{babel}
\usepackage[usenames,dvipsnames]{xcolor}
\usepackage{amsmath,amssymb,amsthm,tikz,url,braket}
\usepackage{mathtools}
\usepackage{fullpage}
\usepackage{enumitem}
\usepackage{bm}

\usepackage[scr]{rsfso}
\newcommand{\HS}{{\mathscr{H}}}

\usepackage[protrusion=true,expansion=true]{microtype}
\usepackage{hyperref}
\hyphenpenalty=5000
\tolerance=1000

\hypersetup{
    pdftitle={Vertex Operators, Solvable Lattice Models and Metaplectic Whittaker Functions},
    pdfauthor={Ben Brubaker, Valentin Buciumas, Daniel Bump, Henrik P. A. Gustafsson},
    linktocpage=true,           
    colorlinks=true,            
    linkcolor=blue!75!black,    
    citecolor=green!50!black,   
    filecolor=magenta,          
    urlcolor=blue!75!black      
}

\newtheorem{maintheorem}{Theorem}

\newtheorem{theorem}{Theorem}[section]
\newtheorem{lemma}[theorem]{Lemma}
\newtheorem{proposition}[theorem]{Proposition}
\newtheorem{assumption}[theorem]{Assumption}
\newtheorem{corollary}[theorem]{Corollary}

\theoremstyle{definition}
\newtheorem{remark}[theorem]{Remark}
\newtheorem{example}{Example}            

\numberwithin{equation}{section}

\renewcommand{\leq}{\leqslant}
\renewcommand{\geq}{\geqslant}
\newcommand{\GL}{\operatorname{GL}}
\newcommand{\Sp}{\operatorname{Sp}}

\newcommand{\Z}{\mathbb{Z}}
\newcommand{\C}{\mathbb{C}}

\newcommand{\cmplx}{\mathbb{C}}

\newcommand{\Uq}{U_q(\widehat{\mathfrak{sl}}_n)}
\newcommand{\aH}{\widehat{H}_N}
\newcommand{\z}{\mathbf{z}}
\newcommand{\UqD}{U^{\bm{\alpha}}_q(\widehat{\mathfrak{sl}}_n)}
\newcommand{\End}{\operatorname{End}}
\newcommand{\wt}{\operatorname{wt}}

\renewcommand{\Im}{\operatorname{Im}}
\newcommand{\Ker}{\operatorname{Ker}}

\newcommand{\F}{\mathfrak{F}}

\newcounter{todocounter}\setcounter{todocounter}{1}

\newcommand{\gammaice}[6]{
\begin{tikzpicture}
\coordinate (a) at (-.75, 0);
\coordinate (b) at (0, .75);
\coordinate (c) at (.75, 0);
\coordinate (d) at (0, -.75);
\coordinate (aa) at (-.75,.5);
\coordinate (cc) at (.75,.5);
\coordinate (ee) at (0,0);
\draw[thick] (a)--(c);
\draw[thick] (b)--(d);
\draw[fill=white, thick] (a) circle (.25);
\draw[fill=white, thick] (b) circle (.25);
\draw[fill=white, thick] (c) circle (.25);
\draw[fill=white, thick] (d) circle (.25);
\node at (0,1) { };
\node at (a) {$#1$};
\node at (b) {$#2$};
\node at (c) {$#3$};
\node at (d) {$#4$};
\node at (aa) {$#5$};
\node at (cc) {$#6$};
\draw[fill=black] (ee) circle (.075);
\end{tikzpicture}}

\makeatletter
\@namedef{subjclassname@2020}{%
  \textup{2020} Mathematics Subject Classification}
\makeatother

\begin{document}
\title{Vertex Operators, Solvable Lattice Models and Metaplectic Whittaker Functions}
\author{Ben Brubaker}
\address{School of Mathematics, University of Minnesota, Minneapolis, MN 55455}
\email{brubaker@math.umn.edu}
\author{Valentin Buciumas}
\address{School of Mathematics and Physics, 
The University of Queensland, 
St. Lucia, QLD, 4072, 
Australia}
\email{valentin.buciumas@gmail.com}
\author{Daniel Bump}
\address{Department of Mathematics, Stanford University, Stanford, CA 94305-2125}
\email{bump@math.stanford.edu}
\author{Henrik P. A. Gustafsson}
\address{\hspace{-\parindent}\textnormal{Until September 15, 2019:} \newline
  \indent Department of Mathematics, Stanford University, Stanford, CA 94305-2125. \newline 
  \textnormal{Since September 16, 2019 (i.e.\ after journal submission):} \newline
  \indent School of Mathematics, Institute for Advanced Study, Princeton, NJ~08540. \newline 
  \indent Department of Mathematics, Rutgers University, Piscataway, NJ~08854. \newline 
  \indent Department of Mathematical Sciences, University of Gothenburg and Chalmers University of Technology, SE-412~96 Gothenburg, Sweden.}
\email{gustafsson@ias.edu}
\subjclass[2020]{Primary: 22E50; Secondary: 05E05, 17B69, 20G42}

\maketitle

\begin{abstract}
  We show that spherical Whittaker functions on an $n$-fold cover of the
  general linear group arise naturally from the quantum Fock space
  representation of $U_q(\widehat{\mathfrak{sl}}(n))$ introduced by Kashiwara,
  Miwa and Stern (KMS). We arrive at this connection by reconsidering the
  solvable lattice models known as ``metaplectic ice'' whose partition
  functions are metaplectic Whittaker functions. First, we show that a 
  certain Hecke action on metaplectic Whittaker coinvariants agrees (up to
  twisting) with a Hecke action of Ginzburg, Reshetikhin, and Vasserot arising
  in quantum affine Schur-Weyl duality. This allows us to expand the
  framework of KMS by Drinfeld twisting to introduce Gauss sums into the
  quantum wedge, which are necessary for connections to metaplectic forms.
  Our main theorem interprets the row transfer matrices of this ice model as
  ``half'' vertex operators on quantum Fock space that intertwine with the
  action of $U_q(\widehat{\mathfrak{sl}}(n))$.
  	
  In the process, we introduce new symmetric functions
  termed \textit{metaplectic symmetric functions} and explain how they are
  related to Whittaker functions on an $n$-fold metaplectic cover of
  $\GL_r$. These resemble \textit{LLT polynomials} or \textit{ribbon symmetric
  functions} introduced by Lascoux, Leclerc and Thibon, and in fact the
  metaplectic symmetric functions are (up to twisting) specializations
  of \textit{supersymmetric LLT polynomials} defined by Lam. Indeed Lam 
  constructed families of symmetric functions from Heisenberg algebra
  actions on the Fock space commuting with the $U_q(\widehat{\mathfrak{sl}}(n))$-action.
  The Heisenberg algebra is independent of Drinfeld twisting of the quantum group.
  We explain that half vertex operators agree with Lam's construction and this interpretation 
  allows for many new identities for metaplectic symmetric and Whittaker functions, including Cauchy identities.
  While both metaplectic symmetric
  functions and LLT polynomials can be related to vertex operators on the
  quantum Fock space, only metaplectic symmetric functions are connected to
  solvable lattice models.
\end{abstract}

\section{Introduction}

This paper concerns two mechanisms by which the quantum groups $U_q
(\hat{\mathfrak{g}})$, for $\mathfrak{g}$ a simple Lie algebra or
superalgebra, produce families of special functions with a suite of
interesting properties including functional equations, branching rules and
unexpected algebraic relations. The first mechanism uses solvable lattice models associated to
finite-dimensional modules of $U_q (\hat{\mathfrak{g}})$. The second mechanism uses
actions of Heisenberg and Clifford algebras on a fermionic Fock space, as in the boson-fermion 
correspondence~{\cite{DateJimboMiwaSolitons,JimboMiwa,KacRaina}} with connections to soliton theory. 
We will use these two points of view to provide new insight into the theory of metaplectic Whittaker functions
for the general linear group and relate them to LLT polynomials. To begin, we explain these two approaches to special functions from quantum affine
groups in more detail.

If $V$ is a
finite-dimensional module of $\mathfrak{g}$, then since
$U_q(\hat{\mathfrak{g}})$ is the quantization of a central extension of
$\mathfrak{g} \otimes \mathbb{C} [t, t^{- 1}]$, we obtain a family of
{\textit{evaluation modules}} $V_z$ ($z \in \mathbb{C}^{\times}$) in which $t$
is specialized to the value $z$. Using quasitriangularity, we have
$U_q(\hat{\mathfrak{g}})$-homomorphisms (almost always isomorphisms) $V_{z_1} \otimes V_{z_2} \longrightarrow V_{z_2} \otimes
V_{z_1}$ dictated by an $R$-matrix $R(z_1, z_2)$ satisfying
\begin{equation}
  \label{paramybe}
   R_{12}(z_1,z_2) R_{13}(z_1,z_3) R_{23}(z_2, z_3)  = R_{23}(z_2, z_3)
   R_{13}(z_1,z_3) R_{12}(z_1,z_2),
\end{equation}
in $\text{End}(V_{z_1} \otimes V_{z_2} \otimes V_{z_3})$. This
identity is called the
{\textit{parametrized (quantum) Yang-Baxter equation}} with parameter group
$\mathbb{C}^{\times}$. These are endomorphisms of
$V_{z_1}\otimes V_{z_2}\otimes V_{z_3}$ and the subscripts $R_{ij}$ mean
that the matrix $R$ is applied to the $i$-th and $j$-th component of the
threefold tensor product.

Given any such matrix $R$, we may ask for a matrix $T(z)$ satisfying the ``RTT'' relation:
\begin{equation}
  \label{rttybe}
  R(z_1,z_2) T(z_1) T(z_2) = T(z_2) T(z_1) R(z_1,z_2).
\end{equation}
Typically, the matrix $T(z)$ arises as an endomorphism of $V_{z}\otimes W$ where
$W$ is a fixed object in the category of
$U_q(\hat{\mathfrak{g}})$-modules. If $W=V_{z_3}$ and $T(z)=R(z,z_3)$
then (\ref{rttybe}) is
equivalent to (\ref{paramybe}). For arbitrary $W$, the
existence of a $T(z)\in\End(V_z,W)$ making (\ref{rttybe}) true
follows from quasitriangularity. 

Solutions $T(z)$ to (\ref{rttybe}) may also arise as a ``row transfer matrix'' in a solvable lattice model.
For example in the case of the field-free six-vertex model, 
Baxter~\cite{Baxter} demonstrates that the resulting partition function is a symmetric function in the $z_i$
when its Boltzmann weights satisfy the Yang-Baxter equation (\ref{paramybe}). The underlying algebra was explained
by Kulish and Reshetikhin~\cite{KulishReshetikhin}, Sklyanin~\cite{Sklyanin}, 
Drinfeld~\cite{Drinfeld} and Jimbo~\cite{JimboHecke} and the
relevant quantum group associated to the $R$-matrix is $U_q(\widehat{\mathfrak{sl}}_2)$.
To connect to the presentation of the Yang-Baxter equation in the previous paragraph,
each edge in the planar lattice model is associated to a two-dimensional evaluation
module~$V_z$ and the local Boltzmann weights encode endomorphisms among them.

In \cite{BrubakerBuciumasBump}, the first three authors considered examples of 
solvable square lattice models connected to $R$-matrices of evaluation modules for $U_q(\widehat{\mathfrak{gl}}(n|1))$.
In these examples (Theorem~1 in \cite{BrubakerBuciumasBump}),
the matrices $T(z)$ in (\ref{rttybe}) do not quite fit the standard paradigm.
Each vertex in the square lattice receives a Boltzmann weight reflecting the action of $T(z)$ on basis elements
determined by adjacent edges; while the horizontal edges may be identified with evaluation modules for 
$U_q(\widehat{\mathfrak{gl}}(n|1))$, the vertical edges represent a two-dimensional vector space with no known
algebraic connection to this quantum group. The problem is that
we are not aware of any candidate for a two dimensional module $M$ of
$U_q(\widehat{\mathfrak{gl}}(n|1))$ that would explain the matrix $T(z)$.
In other words, we would like there to exist an $M$ such that
the $R$-matrix for $V_z \otimes M$ is the matrix for a set of Boltzmann weights used in this paper. See
Table~\ref{tab:mweights} in Section~\ref{deltaicefockspace}. If no such two-dimensional module exists,
then we have an example of a parametrized Yang-Baxter equation that
is not explained by quasitriangularity. This is an important unresolved
question.

Nevertheless in \cite{BrubakerBuciumasBump} the partition function of the model is shown to be solvable and equal the spherical
Whittaker function on an $n$-fold metaplectic cover of the general linear
group; this will be our primary example of the sort of special functions mentioned
at the outset. 

As we will explain in the present paper, an alternate algebraic interpretation is possible if we take
$T(z)$ to be the row transfer matrix of an infinite grid; then a
module explaining $T(z)$ does appear, and it is the quantum fermionic Fock space defined by Kashiwara, Miwa and
Stern~{\cite{KMS}}.
Thus instead of trying to interpret the vertically oriented edges (which
can have only two states $\pm$) as 2-dimensional modules in the category,
there is an alternative approach -- one that takes us from the solvable
lattice model point of view to the Heisenberg algebra point of view, our
second mechanism for producing special functions.
In this approach, an infinite sequence of vertical edges in a fixed row of our square
lattice model parametrizes a vector in the
\textit{fermionic Fock space} $\mathfrak{F}$. The row transfer matrix for the
model then becomes an operator
$T(z) : \mathfrak{F} \longrightarrow \mathfrak{F}$ with
$z \in \mathbb{C}^{\times}$ a fixed parameter. The Yang-Baxter equation
implies that the operators $T(z_i)$ and $T(z_j)$ commute for any $i$ and $j$.

In these examples, the space $\mathfrak{F}$ is not the usual fermionic Fock
space described (for example) in~{\cite{KacRaina}}. Instead it is the quantum Fock
space $\mathfrak{F} = \mathfrak{F}_q^{(n)}$ of~{\cite{KMS}}, which is a module for $U_q
(\widehat{\mathfrak{sl}}_n)$. It will be a consequence of our main
theorem that the operators $T(z)$ are $U_q
(\widehat{\mathfrak{sl}}_n)$-module homomorphisms. It also gives a
proof, independent of the Yang-Baxter equation, that the operators
$T(z)$ commute. Thus our method here succeeds in providing a quantum group
interpretations to these problematic vertical edges in the metaplectic ice model.

We may picture the Fock space $\mathfrak{F}$ as follows. Similar to the way Dirac described the electron
sea, consider a quantum particle with an infinite number of states, one
for each energy level, and a system of such particles obeying the Pauli exclusion principle where the lowest energy levels are all occupied and the
highest levels are unoccupied. Thus if $u_i$ represents the particle in a
state with energy $i$, then a basis of $\mathfrak{F}$ consists of vectors
\begin{equation}
  \label{semimon} u_{\mathbf{i}}:=u_{i_m} \wedge u_{i_{m - 1}} \wedge \cdots
\end{equation}
where $\mathbf{i}=(i_m,i_{m-1},\cdots)$ is a strictly
decreasing sequence such that $i_k=k$ for $k\ll 0$.
Here $i_m, i_{m - 1}, \cdots$ are the energy levels of occupied states; we
may arrange that $i_m > i_{m - 1} > \cdots$. The condition that $i_k = k$ for $k \ll 0$ 
ensures that all sufficiently low energy levels are occupied. The totality of
such states for fixed $m$ is the level $m$ space $\mathfrak{F}_m$ and
$\mathfrak{F} = \bigoplus_m \mathfrak{F}_m$.

If $m$ is given, we may parametrize the semi-infinite monomials
(\ref{semimon}) by partitions: if $\lambda = (\lambda_1, \lambda_2, \cdots)$
is a partition, then we may take $i_m = m+\lambda_1$, $i_{m - 1} =
m-1+\lambda_2$, etc. This gives a bijection between partitions and basis
vectors of $\mathfrak{F}_m$. Thus we write
\begin{equation}
   \label{partitionnot}
   |\lambda\rangle = |\lambda; m\rangle := 
   u_{m+\lambda_1} \wedge u_{m-1+\lambda_2} \wedge \cdots\;.
\end{equation}

In Section~\ref{sec:fermionicfock} we review and generalize
the construction of the quantum Fock space of~\cite{KMS}. 
In
Theorem~\ref{thm:heckeactionsagree} we relate the
Hecke action that underlies this construction (due to Ginzburg, Reshetikhin
and Vasserot~\cite{GRV}) to another Hecke action, which was motivated by
the action in~\cite{BBBF} on Whittaker coinvariants. Because of this we are able to easily build an action of the Hecke algebra modified by a Drinfeld twist.
This generalization allows us to introduce
Gauss sums to the anticommutation rule for vectors in the Fock space.
This twisting is needed for the application to metaplectic Whittaker
functions, but is more general than what is needed for this application
and so may be of importance for other purposes.

To connect the quantum Fock space to solvable lattice models, we
introduce a grid, infinite in width, whose boundary edges
encode vectors in the Fock space.
The column edges of the solvable lattice model
in \cite{BrubakerBuciumasBump} are likewise indexed by partitions, so may be
viewed as semi-infinite wedge products according to the above correspondence.
This point of view will be detailed further in Section~\ref{deltaicefockspace}.

Let us now explain our main theorem which considers two solvable lattice models connected to $U_q(\widehat{\mathfrak{gl}}(n|1))$ $R$-matrices called \emph{Gamma ice} and \emph{Delta ice} detailed in Section~\ref{sec:proofmain}, and their row transfer matrices $T_\Gamma(z)$ and $T_\Delta(z)$. 

In addition to being a $U_q
(\widehat{\mathfrak{sl}}_n)$-module, $\mathfrak{F}$ is a module for a
Heisenberg Lie algebra, spanned by ``current'' operators $J_k$, and by a
central vector $1$. The operator $J_k$ (denoted $B_k$ in {\cite{KMS}}, and defined in \eqref{eq:J-def}) shifts
one fermion to a different level by changing its energy from $i$ to $i - kn$.
The operators $J_k$ with $k > 0$ are thus right-moving operators, and those
with $k < 0$ are left-moving. They satisfy $[J_k, J_l] = 0$ unless $k = - l$.

Introduce the operators $H_+ (z)$ and $H_- (z)$ defined by
\begin{equation}
    \label{eq:intro-Hpm}
  H_{\pm} (z) := \sum_{k = 1}^{\infty} \frac{1}{k}  (1 - v^k) z^{\pm nk}
  J_{\pm k} \hspace{0.17em} .
\end{equation}
Our main theorem, which will be proved in Section~\ref{sec:proofmain}, is:
\begin{maintheorem}
  \label{thm:eH-equals-T}The operators $e^{H_+(z)}$ and $e^{H_-(z)}$
  equal the row transfer matrices of Gamma and Delta ice:
  \begin{equation}
    \label{ehtform}
    e^{H_+(z)} = T_\Delta(z),\qquad e^{H_-(z)} = T_\Gamma(z)\;.
  \end{equation}
\end{maintheorem}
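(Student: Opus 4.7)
I would prove each identity in \eqref{ehtform} by comparing matrix coefficients in the semi-infinite wedge basis $\{|\lambda;m\rangle\}$ of \eqref{partitionnot}. Both $e^{H_\pm(z)}$ and the row transfer matrices $T_\Delta(z), T_\Gamma(z)$ preserve the level-$m$ decomposition of $\mathfrak{F}$, so it suffices, for each fixed $m$ and each pair of partitions $\lambda, \mu$, to verify
\begin{equation*}
\langle \mu;m|\, e^{H_+(z)}\,|\lambda;m\rangle \;=\; \langle \mu;m|\, T_\Delta(z)\,|\lambda;m\rangle,
\end{equation*}
and analogously for the $H_-/T_\Gamma$ identity. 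I would treat the $\Delta$ case in detail and deduce $\Gamma$ either by a symmetric argument using the left-moving currents $J_{-k}$, or through the particle--hole duality interchanging Gamma and Delta ice.

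\textbf{Left-hand side.} Because $[J_k,J_l]=0$ for $k,l>0$, the exponential $e^{H_+(z)}$ expands as a sum over partitions $\mathbf{k}=(k_1\ge k_2\ge\cdots\ge k_\ell)$, each term a product of factors $(1-v^{k_i})/k_i\cdot z^{nk_i}$ together with $J_{k_1}\cdots J_{k_\ell}$. Since $J_k$ shifts a single fermion by $-kn$ levels, its action on $|\lambda\rangle$ produces, via the standard $n$-quotient correspondence, a signed sum over additions of a single $n$-ribbon to $\lambda$, the sign being determined by the ribbon spin through the fermionic straightening rule. Iterating and using $z$-homogeneity, the matrix coefficient $\langle \mu|e^{H_+(z)}|\lambda\rangle$ becomes $z^{n|\mu/\lambda|}$ times a polynomial in $v$ enumerating sequences of $n$-ribbon additions growing $\lambda$ into $\mu$, weighted by spin and the scalars from $H_+(z)$. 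This is precisely the one-variable ribbon function Lam obtains from commuting Heisenberg operators on quantum Fock space, now carrying the Drinfeld twist described in Section~\ref{sec:fermionicfock}.

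\textbf{Right-hand side.} The matrix coefficient $\langle \mu|T_\Delta(z)|\lambda\rangle$ is the partition function of a single row of Delta ice whose top and bottom vertical edges are fixed to encode $|\mu\rangle$ and $|\lambda\rangle$ via the correspondence of Section~\ref{deltaicefockspace}. An admissible configuration is determined by the positions of the horizontal edges carrying a $+$ sign, and these positions record exactly which fermion in $\lambda$ is shifted and to which new level. Using Theorem~\ref{thm:heckeactionsagree} to translate between the Hecke structure underlying the Delta ice Boltzmann weights and the Drinfeld-twisted Hecke action on Fock space, I would group the local weights along the row into blocks indexed by the $n$-residues of the columns traversed, and show that each block contributes an $n$-ribbon with the same spin sign and Gauss-sum factor as on the left-hand side. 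This yields a term-by-term matching of the two expansions.

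\textbf{Main obstacle.} The hardest step is the precise weight-matching: establishing that the product of local Boltzmann weights along a row reproduces both the spin signs and the scalars $\prod_i (1-v^{k_i})/k_i$ coming from the expansion of the exponential. The factor $(1-v^k)$ has a natural origin on the lattice side as the difference of the two admissible weights at an ``exchange'' vertex of Delta ice, while the denominator $k_i$ and the reduction from ordered products of $J_{k_i}$ to a sum over partitions $\mathbf{k}$ arises from the symmetrization inherent in the commuting Heisenberg action. Matching the metaplectic Gauss sums in the Boltzmann weights with the twisted anticommutation rule in $\mathfrak{F}$ is exactly the point where Theorem~\ref{thm:heckeactionsagree} (and the Drinfeld twist it enables) becomes indispensable; without it, the two sides would differ by a twisting cocycle, which is the obstruction the earlier sections of the paper are set up to remove.
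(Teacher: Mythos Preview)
Your plan is not wrong in spirit---both sides are indeed partition-like sums indexed by ribbon data---but it is not a proof, and it diverges sharply from how the paper argues.

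\textbf{The main gap.} Your proposal rests on two claims that are not substantiated. First, you assert that Theorem~\ref{thm:heckeactionsagree} ``translates between the Hecke structure underlying the Delta ice Boltzmann weights and the Drinfeld-twisted Hecke action on Fock space.'' This is a misreading: Theorem~\ref{thm:heckeactionsagree} identifies two formulas for the \emph{same} Hecke action on $V_n(z)^{\otimes N}$, and is used only to set up the Drinfeld-twisted quantum wedge in Section~\ref{sec:fermionicfock}. It plays no role in the proof of Theorem~\ref{thm:eH-equals-T}, and there is no Hecke action lurking in the one-row transfer matrix that you could match to it. Second, your sketch of the weight-matching---``group the local weights along the row into blocks indexed by the $n$-residues of the columns''---is where the actual work lies, and you have not indicated how to carry it out. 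The difficulty is real: the quantum-wedge straightening rule \eqref{eq:quantum-wedge} has correction terms of unbounded length, so the action of a single $J_k$ on $|\lambda\rangle$ is not simply ``add one $n$-ribbon with a spin sign''; the Gauss sums $g(a)$ enter in a way that does not obviously telescope against the Boltzmann weights in Table~\ref{tab:mweights}. Your remark that $(1-v^k)$ ``has a natural origin on the lattice side as the difference of the two admissible weights at an exchange vertex'' is suggestive but not an argument.

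\textbf{What the paper does instead.} The paper avoids any direct combinatorial matching. For Delta ice it proves an \emph{intertwining identity}: both $e^{H_+(z)}$ and $T_\Delta(z)$ satisfy
\[
X\,\rho^*_k(z^n)\,|\xi\rangle \;=\; \rho^*_k(vz^n)\,X\,|\xi\rangle
\]
for $k>i_m$, where $\rho^*_k(w)=\psi^*_k - w\psi^*_{k-n}$ (Propositions~\ref{prop:rho-H} and~\ref{prop:rho-T}). For $e^{H_+(z)}$ this is a short Baker--Campbell--Hausdorff calculation; for $T_\Delta(z)$ it is reduced (Proposition~\ref{etxirhocompat}) to a finite identity on a window of $n+1$ columns, checked in $32$ cases in Tables~\ref{tab:caseisubcases} and~\ref{tab:caseiisubcases}. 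Since any $\xi$ can be written as $\rho^*_{i_m}(z^n)\xi' + z^n\xi''$ with $\xi',\xi''$ of strictly smaller degree (Lemma~\ref{lem:degree}), and both operators fix the vacuum, an induction on degree gives $e^{H_+(z)}=T_\Delta(z)$. The Gamma identity is then obtained not by a parallel argument but by adjointness: $T_\Gamma(1/\bar z)=T_\Delta(z)^\dagger$ and $J_{-k}=J_k^\dagger$, so $e^{H_-(z)}=T_\Gamma(z)$ follows formally. This intertwining-plus-induction strategy sidesteps entirely the term-by-term bookkeeping you propose, and in particular never needs to unpack the ribbon expansion of $e^{H_+(z)}$.
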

Operators such as these occur in conformal field theory, and also other areas
of mathematics such as soliton theory, ``monstrous moonshine'' and the
abstract boson-fermion correspondence. Generally, we will call an operator of
the form
\begin{equation}
  \label{hplusadef} \exp (H_+ [a] (z)), \hspace{2em} H_+ [a] (z) = \sum_{k =
  1}^{\infty} a_k J_k z^k
\end{equation}
or
\begin{equation}
  \label{hminadef} \exp (H_- [b] (z)), \hspace{2em} H_- [b] (z) = \sum_{k =
  1}^{\infty} b_{- k} J_{- k} z^{- k}
\end{equation}
a \textit{half-vertex operator}. We must be careful with $H_- [b]$, since
$H_- [b] (z) | \lambda \rangle$ is an infinite sum and not in
$\mathfrak{F}_m$. Nevertheless the sum $\langle \mu |H_- [b] (z) |
\lambda \rangle$ is finite and therefore such expressions make sense; in fact
just $\langle \mu|H_- [b] (z)$ is a finite sum. (Here we use the usual Dirac
notation for operators on $\mathfrak{F}_m$. If $H : \mathfrak{F}_m
\longrightarrow \mathfrak{F}_m$ is an operator, we will denote by $\langle
\mu|H| \lambda \rangle$ the inner product of $H| \lambda \rangle$ with
$|\mu \rangle$.)

Operators of the form $\exp(H_- [b] (z)) \cdot \exp(H_+ [a] (z))$ appear in mathematical
physics. See for example~{\cite{FrenkelKac}}, \cite{KMPS} Part~II in Volume~I
or {\cite{JimboMiwa}} (1.15). Subject to a locality assumption
({\cite{KacBeginners,FrenkelBenZvi}}), they are called \textit{vertex
operators}. In this paper we will deal mainly with half-vertex operators.
Yet there are representation theory contexts in which Gamma ice and Delta ice occur together (\cite{wmd5book,BBBG,Ivanov,BBCG,GrayThesis}) 
leading to vertex operators as above. 
In Section~\ref{sec:vertexops} we show that the locality properties of such operators
fit into the algebraic framework of Frenkel and
Reshetikhin~\cite{FrenkelReshetikhinChiral}.

As mentioned above, the method of Baxter~{\cite{Baxter}} based on
the Yang-Baxter equation produces families of
commuting row-transfer matrices. That is,
$T_{\Delta} (z_1)T_{\Delta} (z_2) = T_{\Delta} (z_2) T_{\Delta} (z_1)$, and
similarly for $T_\Gamma(z)$. On the other hand, the commutativity also follows
from the identity (\ref{ehtform}), because $J_k$ and $J_l$ commute if $k$ and
$l$ have the same sign. Note however that $T_\Delta$ does not commute with
$T_\Gamma$. In Theorem~\ref{thm:gdcommute} we compute precisely a scalar $C(z,w)$
such that $T_{\Delta}(z)T_\Gamma(w)=C(z,w)T_\Gamma(w)T_\Delta(z)$, and
this calculation is essential to our discussion of locality.

In the paragraphs above we have described relationships between quantum
groups, solvable lattice models, and Heisenberg algebras acting on a
Fock space $\mathfrak{F}$. Using these we will make two connections to
existing literature. First, it is shown in {\cite{BrubakerBuciumasBump,BBBG}}
that the Boltzmann weights that we use in this paper can be used in finite
systems whose partition functions are Whittaker functions on the $n$-fold
metaplectic covers of $\GL_r$ over a local field. It is striking that for
these, the relevant quantum group is $U_q (\widehat{\mathfrak{gl}}_n)$ or its
relatives $U_q (\widehat{\mathfrak{gl}} (n| 1))$ or $U_q
(\widehat{\mathfrak{sl}}_n)$. The relationship between the degree $n$ of the
cover and the rank of the quantum group was very unexpected. For the
application to metaplectic Whittaker functions, the quantum group must be
modified by Drinfeld twisting in order to introduce Gauss sums into the
comultiplication of $U_q (\widehat{\mathfrak{sl}}_n)$, and consequently
into the $R$-matrix and quantum wedge relations in $\mathfrak{F}$.

Although the metaplectic Whittaker functions are not symmetric in the
Langlands parameters $\mathbf{z} = (z_1, \cdots, z_r)$, when we switch to the
infinite grids and the Fock space $\mathfrak{F}$, we find expressions such as
\begin{equation}
  \label{metaplecticsf}
   \mathcal{M}_{\lambda / \mu}^n (\mathbf{z}) = \langle \mu |T_{\Delta} (z_1)
   \cdots T_{\Delta} (z_r) | \lambda \rangle = \left\langle \mu \middle| \exp
   \left( \sum_{k = 1}^{\infty} \frac{1}{k} (1 - v^k) p_{nk} (\mathbf{z}) J_k
   \right) \middle| \lambda \right\rangle,
\end{equation}
where $p_{nk} (\mathbf{z}) = \sum_i z_i^{nk}$ is the power-sum symmetric
function. (We use the notation $\mathcal{M}^n_\lambda$ if $\mu$ is the
empty partition.) By Theorem~A, $\mathcal{M}^n_{\lambda/\mu}$
can be interpreted as a partition function very similar to the metaplectic
Whittaker functions. But unlike Whittaker functions, these polynomials are
symmetric. We will call them \textit{metaplectic symmetric functions}. 
In Theorem~\ref{whitfrommw} we will show
how metaplectic Whittaker functions (which are not symmetric) can be expressed
in terms of the new metaplectic symmetric functions.

Thus we will show that the solvable models of \cite{BrubakerBuciumasBump,BBBG}
admit an interpretation in terms of a Heisenberg algebra commuting with a
$U_q(\widehat{\mathfrak{sl}}_n)$ action on Fock space. The case when $n=1$, which
reduces to the Shintani-Casselman-Shalika formula for the general linear group
(or Tokuyama's formula), was treated in Brubaker and Schultz
\cite{BrubakerSchultz}. In that case, values of Whittaker functions are Schur
polynomials, and so recovers a result expressing Schur polynomials
as partition functions of free-fermionic six-vertex models
\cite{HamelKing,ZinnJustinTiling,hkice,PZJ}.

This brings us to the second connection to existing literature. The quantum Fock space
has in prior results {\cite{LLTRibbon,LamRibbon,LamBoson}} been applied in
the theory of LLT polynomials, also known as \textit{ribbon symmetric
functions}. These are $q$-deformations of products of $n$ Schur functions. If
$n$ is large, they become Hall-Littlewood polynomials. They are a reflection
of the plethysm with power-sum symmetric functions (Adams operations) and are
connected with algorithms in the (modular) representation theory of symmetric
groups. They have reappeared in other contexts such as Schur positivity and
affine Schubert calculus.

Lam~{\cite{LamBoson}} formalized a generalized boson-fermion correspondence
that includes these examples and others such as the LLT polynomials.  The
bosonic Fock space $\mathfrak{B}$ may be identified with the ring $\Lambda$ of
symmetric polynomials and (over $\mathbb{Q}$) the power-sum symmetric
functions $p_k$ generate. They give rise to a representation of the Heisenberg
Lie algebra on $\mathfrak{B}$ in which multiplication by, or differentiation
with respect to, the $p_k$ correspond to the operators $J_k$ on the fermionic
Fock space. (See also {\cite{KacRaina,JimboMiwa,DateJimboMiwaSolitons}}.) Lam
explained how to construct symmetric functions from any such Heisenberg
algebra action and reinterpreted results of \cite{LLTRibbon} to put LLT
polynomials into this framework.

As we demonstrate in Section~\ref{LLTsection}, Lam's symmetric function construction is equivalent
to action by half-vertex operators. Thus LLT polynomials may be expressed in the form
\begin{equation}
 \label{eq:defLLT}
 \mathcal{G}_{\lambda / \mu}^n (\mathbf{z}^n) = \left\langle \mu \middle| \exp
 \left( \sum_{k = 1}^{\infty} \frac{1}{k} p_{nk} (\mathbf{z}) J_k \right) \middle|
 \lambda \right\rangle .
\end{equation}
This is very similar to the metaplectic symmetric functions, and indeed we
will show that the metaplectic symmetric functions are specializations of super LLT polynomials, presented
in Definition 29 of~\cite{LamRibbon}. One might suspect from this that LLT polynomials might
likewise be expressible as partition functions of the solvable lattice models from~\cite{BrubakerBuciumasBump} with
boundary conditions determined by the pair of partitions $\lambda$ and $\mu$; in fact this is not possible. It is
only this very particular specialization of the super LLT polynomial that results in an appropriate cancellation of
terms and permits the resulting function to be expressed using our solvable models.

Moreover in \cite{LamBoson}, Lam shows that these families of symmetric functions constructed from Heisenberg algebras satisfy 
a large collection of interesting identities, including Cauchy and Pieri identities. 
Thus, as a consequence of the main theorem, we are now able 
to use these same tools to prove analogous identities for metaplectic symmetric
functions. As proof of concept, we prove a Cauchy identity for the new
metaplectic symmetric functions.
(See Theorem~\ref{thm:metaplecticcauchy}.) In the non-metaplectic setting, such Cauchy identities for Schur functions
found application in the Rankin-Selberg method.

It seems an important question to find other theories that connect the
two mechanisms of solvable lattice models and vertex operators. The
well-known relationship between the Heisenberg spin-chain Hamiltonians and the field-free six
and eight vertex models may be one example. (See Baxter~\cite{BaxterAnisotropic}.)
Another place to look for an analog of our Main Theorem is in the theory
of Hall-Littlewood polynomials. Thus in Jing~\cite{JingHallLittlewood,JingBoson}
a quantum boson-fermion correspondence is described, where the commuting
actions of a Heisenberg Lie algebra with a quantum group is used to study
Hall-Littlewood polynomials in the context of vertex operators. But on the other
hand Korff~\cite{KorffVerlinde}, taking a point of view surprisingly close to
ours, develops a theory of Hall-Littlewood polynomials using lattice models
based on Boltzmann weights that connect with a $q$-deformed \textit{bosonic}
Fock space. Borodin and Wheeler~\cite{BorodinWheeler} and Wheeler and
Zinn-Justin~\cite{WheelerZJIII}
contain further developments of this viewpoint.

As noted above, the results described above concern mainly half-vertex operators,
which have expansions in terms of the positive or negative Heisenberg
generators $J_k$. However, it is also interesting to consider
operators that involve both the positive and negative generators.
Because Gamma ice and Delta ice occur together in several different
contexts, it is
natural to consider ``fields'' such as $V(z)=T_\Gamma(z)T_\Delta(z)$.
We will look at these in Section~\ref{sec:vertexops}, in particular investigating locality properties of
the field $V(z)$. 
It is outside the scope of this paper to fully realize our operators in the language
of vertex algebras, but it seems likely that this can be
done using the framework of quantum vertex
algebras~\cite{FrenkelReshetikhinChiral,EtingofKazhdanQVOA,BorcherdsQuantum} and
we intend to revisit this in a subsequent paper.
Additional future directions may include generalizations of our construction of solvable
lattice models to other Cartan types, perhaps using the abstract Fock space built in
the work of Lanini, Ram and Sobaje \cite{LaniniRamSobaje,LaniniRam}. 

\medbreak\noindent
\textbf{Acknowledgements:} This work was supported by NSF grants
DMS-1406238 and 1801527 (Brubaker) and DMS-1601026 (Bump) and ERC grant AdG 669655 (Buciumas). During his time at Stanford University (when this paper was written), Gustafsson was supported by the Knut and Alice Wallenberg Foundation. 
We thank Gurbir Dhillon, David Kazhdan, Arun Ram and Anne Schilling for helpful
conversations and communications.

\newpage

\section{The Fermionic Fock Space\label{sec:fermionicfock}}
This section reviews the definition of the fermionic Fock space
following Kashiwara, Miwa and Stern~\cite{KMS}. As they showed,
this is a module for the affine quantum group $\Uq$.
However we will require greater generality by giving the Fock space
the structure of a module over a Drinfeld twist of this quantum group.
Thus while we follow~\cite{KMS} very closely, sometimes we add some details
to make clear the differences between working with $\Uq$ or its
Drinfeld twist. Theorem~\ref{thm:heckeactionsagree} appears
to be new and it is a key ingredient that allows us to deduce the action
of the affine Hecke algebra on the Drinfeld twist of the Fock space.

\subsection{The quantum group}

Let $n$ be a positive integer, and let $q$ be either a formal parameter or a \textit{generic} complex number (i.e., not a root of unity). All the indices in the relations in this paper involving elements of the quantum group should be read modulo $n$.

We introduce the quantum group $\Uq$ which acts on the fermionic Fock space, focusing on the quasitriangular bialgebra structure (it is also a Hopf algebra, but we will not be using the antipode anywhere). Let $[m]_q$ be the quantum integer associated to the integer $m$ defined by 
\[[m]_q := \frac{q^{m} - q^{-m}}{q-q^{-1}}.\]
Let $A = (a_{ij})_{0 \leq i,j \leq n-1}$ be the Cartan matrix of affine type $\widehat{A}_{n-1}$. Its non-zero entries are $a_{ii}=2$ and $a_{ij}=-1$ when $i = j \pm 1$ for $n \geq 3$ (where we recall that the indices should be read modulo $n$). For $n=2$ the second equality in the definition of the Cartan matrix is replaced by $a_{ij} = -2$. 

The quantum group $\Uq$ is the unital algebra generated by elements $E_i, F_i, K_i^{\pm}$ for  $0 \leq i \leq n-1$, subject to the following relations (when $n\geq 3$ or $n=1$):

\begin{equation}\label{qgrelations}
\begin{split}
&\begin{aligned}
    K_i K_j &= K_j K_i, \quad K_i E_j = q^{a_{ij}} E_j K_i, \quad K_i F_j = q^{-a_{ij}} F_j K_i, \\[0.5em]
    E_i E_j &= E_j E_i, \quad F_i F_j = F_j F_i  \quad \text{ if } i \neq j \pm 1, \\
    E_i F_j &- F_j E_i = \delta_{i,j}\frac{K_i -K^{-1}_i}{q-q^{-1}}, \\
\end{aligned}\\[0.5em]
&\begin{aligned}
    E_i^{2} E_{i \pm 1} &- (q+q^{-1})E_i E_{i\pm 1} E_i && \hspace{-0.85em} + E_{i\pm 1}E^2_i && \hspace{-0.85em} = 0, \\
    F_i^{2} F_{i \pm 1} &- (q+q^{-1})F_i F_{i\pm 1} F_i && \hspace{-0.85em} + F_{i\pm 1}F^2_i && \hspace{-0.85em} = 0.
\end{aligned}
\end{split}
\end{equation}

In the case $n=2$, the last two relations are replaced by the following relations:
\begin{equation}\label{qgrelationsn=2}
\begin{aligned}
E_i^{3} E_{i \pm 1} &- [3]_q E^2_i E_{i\pm 1} E_i && \hspace{-0.85em} + [3]_q E_i E_{i\pm 1} E^2_i && \hspace{-0.85em} - E_{i\pm 1}E^3_i && \hspace{-0.85em} = 0, \\
F_i^{3} F_{i \pm 1} &- [3]_q F^2_i F_{i\pm 1} F_i && \hspace{-0.85em} + [3]_q F_i F_{i\pm 1} F^2_i && \hspace{-0.85em} - F_{i\pm 1}F^3_i && \hspace{-0.85em} = 0.
\end{aligned}
\end{equation}

The subalgebra of $\Uq$ generated by $E_i, F_i, K_i^{\pm}$ for  $1 \leq i \leq n-1$ is the finite quantum group $U_q(\mathfrak{sl}_n)$.

\begin{remark}
In the case $n=1$, $U_q(\widehat{\mathfrak{sl}}_1)$ is the algebra generated by $K_{0}^{\pm1}$. We will show that our method produces interesting six-vertex models even starting from this ``trivial'' quantum group.  
\end{remark}

\begin{remark}
The quantum group we denote by $\Uq$ is denoted by $U_q'(\widehat{\mathfrak{sl}}_n)$ in \cite{KMS}. Our quantum group does not contain a derivation $d$.  
\end{remark}

The comultiplication $\Delta$ on $\Uq$ is defined as follows:
\begin{equation}\label{qgcomultiplication}
    \begin{split}
        \Delta(K_i) &= K_i \otimes K_i, \\
        \Delta(E_i) &= 1 \otimes E_i + E_i \otimes K_i, \\
        \Delta(F_i) &= F_i \otimes 1 + K_i^{-1} \otimes F_i.
    \end{split}
\end{equation}

Let $V_n$ be an $n$-dimensional vector space with basis $\{v_1, \ldots, v_n\}$. The natural module of $\Uq$, which we denote by $V_n(z)$, is the vector space $V_n \otimes \cmplx[z,z^{-1}] $ with basis $\{ z^k v_i \}$ for $1 \leq i \leq n , k \in \Z$. Another useful basis is $\{ u_j \}$ with $j \in \Z$ satisfying the relations 
\begin{equation}\label{vtoutransition}
u_{j-kn} = z^k v_j,
\end{equation}
for $1 \leq j \leq n, k \in \Z$. The action of $\Uq$ on $V_n(z)$ is as follows:
\begin{equation}\label{qgaction}
\begin{split}
K_i z^k v_j &= q^{\delta_{i,j} - \delta_{i+1,j}} z^k v_j, \\
E_i z^k v_j &= \delta_{i, j-1}z^{k-\delta_{i,0}}v_{j-1} , \\
F_i z^k v_j &= \delta_{i,j} z^{k-\delta_{i,0}} v_{j+1}.
\end{split}
\end{equation}
There is a natural ordering on the basis $\{ z^k v_j\}$:
\begin{equation}\label{eq:sequence} 
\cdots > z^{k-1}v_2 >z^{k-1} v_1 > z^{k} v_n >z^{k} v_{n-1}> \cdots.
\end{equation}
Note that in the $\{ u_j \}$-basis, the ordering is just $u_{j+1}>u_j$. 

There is an action of $\Uq$ on tensor powers of the natural module
$V_n(z)^{\otimes N}$ given by iterations of the comultiplication. An affine
version of Schur-Weyl duality was studied in \cite{GRV,VaragnoloVasserot},
where it is shown that the centralizer of the action of $\Uq$ on
$V_n(z)^{\otimes N}$ is the Hecke algebra $\widehat{H}_N(v)$, for $v=q^2$. In
\cite{Green}, the affine quantum Schur algebra is introduced and a double
centralizer property is proved (though note that the definition of the affine
quantum group in \cite{Green} is slightly different from our definition).

\subsection{\label{sec:aha}The affine Hecke algebra}
The (type A) affine Hecke algebra $\widehat{H}_N(q^2) =:\aH$ is the associative algebra with generators $T_i$ for $1 \leq i \leq N-1$ and $y^{\pm}_j$ for $1 \leq j \leq N$ subject to the following relations:
\begin{equation}\label{heckerelations}
    \begin{split}
        & T_i^2 = (q^2-1)T_i + q^2, \\
        & T_i T_{i+1}T_i = T_{i+1} T_i T_{i+1}, \\
        & \hspace{-0.9em} 
        \begin{aligned}
            T_i T_j &= T_j T_i  &&\text{ if } |i-j| > 1, \\
            y_i y_j &= y_j y_i, \\
            y_j T_i &= T_i y_j  &&\text{ if } i \neq j, j+1, \\
            T_i y_i T_i &= q^2 y_{i+1}. 
        \end{aligned}\\
    \end{split}
\end{equation}
The first relation in the definition of the Hecke algebra can be rewritten as $(T_i +1)(T_i - q^2) = 0$, which allows one to decompose any space on which $T_i$ acts into eigenspaces corresponding to its two eigenvalues: $q^2$ and $-1$. 

We denote by $S_N$ the symmetric group on $N$ strands. For $\sigma \in S_N$, let $\sigma = s_{i_1} \cdots s_{i_l}$ be a minimal length expression, where $s_i \in S_N$ are the simple permutations. It is then well known that the definition 
\begin{equation}
    \label{eq:T-sigma}
    T_\sigma := T_{s_{i_1}} \cdots T_{s_{i_l}}
\end{equation}
is independent of which minimal length expression of $\sigma$ we choose and
that the set $\{T_{\sigma}, \sigma \in S_N\}$ is a basis for the \textit{the finite Hecke}
algebra $H_N \subset \aH$, which by definition is the algebra generated by $T_1,\cdots,T_{N-1}$.

We denote $(V_n(z))^{\otimes N} = V_n(z_1) \otimes V_n(z_2) \otimes \cdots \otimes V_n(z_N)$ to distinguish between the indeterminates corresponding to different copies of $V_n(z)$. The space $V_n(z)^{\otimes N}$ has a basis 
\begin{equation}
  \label{mathbfzdef}
  v_{\mathbf{j}} \otimes \mathbf{z}:= v_{j_1} \otimes \cdots \otimes
  v_{j_N} \otimes z_1^{k_1}\cdots z_N^{k_N}
\end{equation}
where $\mathbf{j} = (j_1,\cdots,j_N)$ and $\mathbf{z}$ is shorthand for $z_1^{k_1}\cdots z_N^{k_N}$, $k_i \in \Z$. The symmetric group $S_N$ acts on all elements of the form $v_{\mathbf{j}}$ by permutation; it also acts on all elements of the form $\mathbf{z}$ as follows: $s_i : \cdots z_i^{k_i} z_{i+1}^{k_{i+1}}\cdots \mapsto \cdots z_{i+1}^{k_i} z_{i}^{k_{i+1}}\cdots $. 

\begin{remark}
  The notation $\mathbf{z}$ has a different meaning in this section than in
  the introduction. In this section (following~\cite{KMS}) $\mathbf{z}$ is
  defined by (\ref{mathbfzdef}).
\end{remark}

There is a right action of the Hecke algebra $\aH$ on the tensor product $V_n(z)^{\otimes N}$ which was first written down in \cite{GRV}: 

\begin{equation}\label{heckeaction1}
\begin{split}
(v_{\mathbf{j}}\otimes \mathbf{z} ) \cdot T_i &=
    \begin{cases}
		(1-q^2)v_{\mathbf{j}}\otimes \frac{z_{i+1}\mathbf{z}^{s_i} -z_i \mathbf{z}}{z_{i}-z_{i+1}} - q v_{s_i(\mathbf{j})} \otimes \mathbf{z}^{s_i}  & \mbox{if } j_i < j_{i+1}, \\
		(1-q^2)v_{\mathbf{j}}\otimes \frac{z_{i}(\mathbf{z}^{s_i} -\mathbf{z})}{z_{i}-z_{i+1}} -  v_{s_i(\mathbf{j})} \otimes \mathbf{z}^{s_i}  & \text{if } j_i = j_{i+1}, \\
		(1-q^2)v_{\mathbf{j}}\otimes \frac{z_{i}(\mathbf{z}^{s_i} -  \mathbf{z})}{z_{i}-z_{i+1}} - q v_{s_i(\mathbf{j})} \otimes \mathbf{z}^{s_i}  & \text{if } j_i > j_{i+1}, \\
    \end{cases}
 \\
(v_{\mathbf{j}}\otimes \z ) \cdot y_i &= (v_{\mathbf{j}}\otimes \z \cdot z^{-1}_i ).
\end{split}
\end{equation}
A crucial fact in defining the quantum Fock space is the property that the right action of $\aH$ and the left action of $\Uq$ on $V_n(z)^{\otimes N}$ commute. 

Let $V_x$, $x \in \cmplx^{\times}$ be the \textit{evaluation} module of $\Uq$. It is the quotient of the natural module by the submodule spanned by elements $v_{i}z^{k+1} - xv_iz^{k}$. It is called the evaluation module because we ``evaluate'' the indeterminate $z$ at $x \in \cmplx^{\times}$. In \cite[Section~3]{BBBF}, the first three authors and Friedberg give examples of representations of the affine Hecke algebra on evaluation modules of quantum groups with applications to the study of metaplectic Whittaker functions. There is a ``natural''  lifting of the action in \cite{BBBF} to an action of $\aH$ on $V_n(z)^{\otimes N}$ which involves the affine $R$-matrix. 

The quantum group $\Uq$ is quasitriangular; this means there is an element living in (a completion of) $\Uq \otimes \Uq$ called the universal $R$-matrix, which we denote by $\mathcal{R}$, satisfying certain well-known properties. See Proposition 4.1 in \cite{FR} for a formula of the universal $R$-matrix of $\Uq$. The action of $\mathcal{R}$ on $V_n(z)^{\otimes 2} = V_n(z_i) \otimes V_n(z_{i+1})$ is given by the affine $R$-matrix $R(z_{i},z_{i+1}) \in \text{End}(V_n \otimes V_n) \otimes \cmplx[z_i, z_{i+1}]$ defined as:
\begin{equation}
\begin{split}
\tau R(z_i, z_{i+1}) = \sum_i (q z_i -q^{-1}z_{i+1}) e_{ii}\otimes e_{ii} + \sum _{i \neq j} (z_i - z_{i+1})e_{ji} \otimes e_{ij} \\+ \sum_{i>j} (q-q^{-1})z_i e_{jj} \otimes e_{ii} + (q-q^{-1})z_{i+1} e_{ii} \otimes e_{jj},
\end{split}
\end{equation}
where $e_{ij} \in \text{End}(V_n \otimes V_n)$ are the maps $e_{ij}:v_j \mapsto \delta_{ij} v_i$ and $\tau \in \text{End}(V_n \otimes V_n)$ is the flip map $\tau : v_{i} \otimes v_j \mapsto v_j \otimes v_i $. See the unnumbered equation between equations 30 and 31 in \cite{FR} and the preceding discussion for an explanation of the fact that the action of $\mathcal{R}$ on $V_n(z_1) \otimes V_n(z_2)$ is $R(z_1,z_2)$.
Denote by $R(z):=R(1,z)$.

\begin{remark}
If we replace the indeterminates $z_i$ and $z_{i+1}$ in $R(z_i, z_{i+1})$ by complex numbers $x_i$ and $x_{i+1}$, we obtain the affine (type A) $R$-matrix for evaluation modules discovered by Jimbo \cite{JimboToda} before the work of Frenkel and Reshetikhin \cite{FR}.
\end{remark}

The natural version of the evaluation action of the affine Hecke algebra given in \cite{BBBF}, Theorem 3.3 reads as follows:
\begin{equation}\label{heckeaction2}
\begin{split}
    (v_{\mathbf{j}}\otimes \mathbf{z} ) \cdot T_i  &= (q^2-1) \frac{z_i}{z_i-z_{i+1}} v_{\mathbf{j}} \otimes \mathbf{z} -  \frac{q}{z_i-z_{i+1}} (\tau R_q)_{i,i+1} (z_{i},z_{i+1}) v_{\mathbf{j}} \otimes \mathbf{z}^{s_i}, \\
    (v_{\mathbf{j}}\otimes \z ) \cdot y_i &= (v_{\mathbf{j}}\otimes \z \cdot z^{-1}_i ).
\end{split}
\end{equation}

\begin{theorem}\label{thm:heckeactionsagree}
The actions of the affine Hecke algebra in equations \eqref{heckeaction1} and \eqref{heckeaction2} agree.
\end{theorem}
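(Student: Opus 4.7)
The plan is a direct verification on basis vectors. Both formulas give the $y_j^{\pm 1}$-generators the same definition (multiplication by $z_j^{\mp 1}$), so it suffices to check agreement of the two actions of each $T_i$. Since each $T_i$-action is local, touching only the $i$-th and $(i+1)$-th tensor factors and their spectral parameters $z_i, z_{i+1}$, the first step is to reduce to the two-factor case and verify the identity on a generic $v_a \otimes v_b \otimes z_1^{k_1} z_2^{k_2}$ with $a,b \in \{1,\ldots,n\}$.

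Next I would evaluate $(\tau R)(z_1, z_2)$ on $v_a \otimes v_b$ using the explicit $R$-matrix formula stated just before the theorem. Three cases arise naturally from that formula: (i) $a = b$, where only the diagonal matrix units contribute and yield $(qz_1 - q^{-1}z_2)(v_a \otimes v_a)$; (ii) $a < b$; and (iii) $a > b$. In the latter two, one obtains a ``permutation'' term $(z_1 - z_2)(v_b \otimes v_a)$ together with a ``diagonal correction'' $(q - q^{-1})z_\ast (v_a \otimes v_b)$ where $z_\ast = z_1$ if $a < b$ and $z_\ast = z_2$ if $a > b$. These three cases are in bijection with the three branches of the piecewise definition \eqref{heckeaction1}.

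Substituting each case into \eqref{heckeaction2} and using the identity $q(q-q^{-1}) = q^2-1$, the coefficient of $v_{s_i(\mathbf{j})} \otimes \mathbf{z}^{s_i}$ reduces to exactly $-q$ (or $-1$ in case~(i)), in agreement with \eqref{heckeaction1}. The remaining coefficients of $v_{\mathbf{j}} \otimes \mathbf{z}$ and $v_{\mathbf{j}} \otimes \mathbf{z}^{s_i}$ are then placed over the common denominator $z_i - z_{i+1}$ and compared to the rational functions in \eqref{heckeaction1}; the key algebraic rearrangement that produces the asymmetric numerators $z_i$ or $z_{i+1}$ seen in the two unequal cases is
\[
\frac{z_i(\mathbf{z}^{s_i} - \mathbf{z})}{z_i - z_{i+1}} - \mathbf{z}^{s_i} = \frac{z_{i+1} \mathbf{z}^{s_i} - z_i \mathbf{z}}{z_i - z_{i+1}},
\]
which absorbs the $\mathbf{z}^{s_i}$-contribution of the diagonal correction into the numerator. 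The main obstacle is purely notational: tracking which spectral parameter appears in the $R$-matrix correction in each case, and matching it with the correct branch of \eqref{heckeaction1}. Once the case labeling is fixed, the remaining verification is a short algebraic simplification.
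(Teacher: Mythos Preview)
Your approach is exactly the paper's: a direct case-by-case verification on basis vectors, splitting according to whether $j_i<j_{i+1}$, $j_i=j_{i+1}$, or $j_i>j_{i+1}$, expanding $(\tau R)(z_i,z_{i+1})$ on the relevant tensor factors, and matching the resulting rational functions to the three branches of \eqref{heckeaction1}. One caution on the bookkeeping you flagged: the assignment of $z_\ast$ in your cases (ii) and (iii) is swapped relative to what the computation actually requires---for $j_i<j_{i+1}$ the diagonal correction must carry $z_{i+1}$ (not $z_i$) so that, after pulling out the common $(q^2-1)\frac{z_i}{z_i-z_{i+1}}v_{\mathbf j}\otimes\mathbf z$ term, the remaining $\mathbf z^{s_i}$-coefficient becomes $\frac{z_{i+1}\mathbf z^{s_i}-z_i\mathbf z}{z_i-z_{i+1}}$ as in \eqref{heckeaction1}; your displayed identity is precisely the manipulation needed once the cases are labeled correctly.
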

\begin{proof}
This follows by the following computation:
\begin{equation*}
\begin{split}
& ((v_{\mathbf{j}}\otimes \mathbf{z} ) \cdot T_i)_{\text{eq. } \eqref{heckeaction1}} = 
    \begin{cases}
		(1-q^2)v_{\mathbf{j}}\otimes \frac{z_{i+1}\mathbf{z}^{s_i} -z_i \mathbf{z}}{z_{i}-z_{i+1}} - q v_{s_i(\mathbf{j})} \otimes \mathbf{z}^{s_i}  & \mbox{if } j_i < j_{i+1} \\
		(1-q^2)v_{\mathbf{j}}\otimes \frac{z_{i}(\mathbf{z}^{s_i} -\mathbf{z})}{z_{i}-z_{i+1}} -  v_{s_i(\mathbf{j})} \otimes \mathbf{z}^{s_i}  & \text{if } j_i = j_{i+1} \\
		(1-q^2)v_{\mathbf{j}}\otimes \frac{z_{i}(\mathbf{z}^{s_i} -  \mathbf{z})}{z_{i}-z_{i+1}} - q v_{s_i(\mathbf{j})} \otimes \mathbf{z}^{s_i}  & \text{if } j_i > j_{i+1} \\
    \end{cases}
\\
&= (q^2-1) \frac{z_i}{z_i-z_{i+1}} v_{\mathbf{j}} \otimes \mathbf{z} - \frac{1}{z_i-z_{i+1}}
    \begin{cases}
		((q^2-1)v_{\mathbf{j}} z_{i+1} + q (z_i-z_{i+1})v_{s_i(\mathbf{j})} )\otimes \mathbf{z}^{s_i}  & \text{if } j_i < j_{i+1} \\
		((q^2-1)v_{\mathbf{j}} z_i  + (z_i-z_{i+1}) v_{s_i(\mathbf{j})} ) \otimes \mathbf{z}^{s_i}  & \text{if } j_i = j_{i+1} \\
		((q^2-1)v_{\mathbf{j}} z_{i} + q(z_i-z_{i+1}) v_{s_i(\mathbf{j})}) \otimes \mathbf{z}^{s_i}  & \text{if } j_i > j_{i+1} \\
    \end{cases}
\\
& =(q^2-1) \frac{z_i}{z_i-z_{i+1}} v_{\mathbf{j}} \otimes \mathbf{z} - \frac{1}{z_i-z_{i+1}} 
    \begin{cases}
		((q^2-1)v_{\mathbf{j}} z_{i+1} + q (z_i-z_{i+1})v_{s_i(\mathbf{j})} )\otimes \mathbf{z}^{s_i}  & \text{if } j_i < j_{i+1} \\
		((q^2 z_i  -z_{i+1}) v_{\mathbf{j}} ) \otimes \mathbf{z}^{s_i}  & \mbox{if } j_i = j_{i+1} \\
		((q^2-1)v_{\mathbf{j}} z_{i} + q(z_i-z_{i+1}) v_{s_i(\mathbf{j})}) \otimes \mathbf{z}^{s_i}  & \text{if } j_i > j_{i+1} \\
	\end{cases}
\\
& = (q^2-1) \frac{z_i}{z_i-z_{i+1}} v_{\mathbf{j}} \otimes \mathbf{z} -  \frac{q}{z_i-z_{i+1}} (\tau R_q)_{i,i+1} (z_i, z_{i+1}) v_{\mathbf{j}} \otimes \z^{s_i} = ((v_{\mathbf{j}}\otimes \mathbf{z} ) \cdot T_i)_{\text{eq. } \eqref{heckeaction2}} .
\end{split}
\end{equation*}
\end{proof}

The importance of Theorem \ref{thm:heckeactionsagree} is twofold. First it
clarifies the relation between the two actions of the affine Hecke algebra
which were discovered in different contexts. Secondly, it gives us a way to
rewrite the action in \cite{KMS}, which is instrumental in the construction of
the Fock space representation, in terms of the affine $R$-matrix. In the next
sections we use a Drinfeld twist of the $R$-matrix to write down a different
action of the affine Hecke algebra on $V_n(z)^{\otimes N}$ (which commutes
with the action of a Drinfeld twist of the quantum group $\Uq$). This allows
us to define the Drinfeld twist of the quantum Fock space. 
   
\subsection{Drinfeld twisting}

The Drinfeld twist \cite{DrinfeldQuasiHopf} is a deformation of the Hopf
algebra structure of a quantum group that changes the comultiplication,
antipode and the universal $R$-matrix, but leaves the multiplication, unit and
counit intact. Drinfeld twisting produces new solutions of the Yang-Baxter
equation.

Reshetikhin \cite{ReshetikhinMultiparameter} proved that given a quantum group $H$ and an element $F \in H \otimes H$ of the form $F = \sum_i f^i \otimes f_i$ satisfying certain properties (\cite[Section~1]{ReshetikhinMultiparameter}), one can define a Drinfeld twist of $H$, denoted $H^F$, with a new comultiplication and universal $R$-matrix given by  
\begin{equation}\label{eq:DtwistDeltaRmatrix}
    \begin{split}
        \Delta^{F}(a) &= F \Delta(a) F^{-1} \\
        \mathcal{R}^F &= F_{21} \mathcal{R} F^{-1}
    \end{split}
\end{equation}
where $F_{21} = \sum_i f_i \otimes f^i$. He then shows, in \cite{ReshetikhinMultiparameter} Section 2, that  
\begin{equation}\label{eq:definitionofF}
F =  \exp \left(\sum_{1 \leq i < j \leq n } a_{ij} (H_i \otimes H_j - H_j \otimes H_i) \right) \end{equation}
satisfies the relations needed to produce a Drinfeld twist of $U_q(\mathfrak{sl}_n)$, where $K_i = q^{H_i}$ and $a_{ij} \in \cmplx$. 

Let $U_q^F(\widehat{\mathfrak{sl}}_n)$ be the quantum group obtained by applying a Drinfeld twist on $\Uq$ using the element $F \in U_q(\mathfrak{sl}_n) \otimes U_q(\mathfrak{sl}_n) \subset \Uq \otimes \Uq$ defined in equation \eqref{eq:definitionofF}. Its comultiplication and universal $R$-matrix will be given by equation \eqref{eq:DtwistDeltaRmatrix}. The twisted quantum group $U_q^F(\widehat{\mathfrak{sl}}_n)$ is the same as $U_q(\widehat{\mathfrak{sl}}_n)$ as algebras, however the coproduct, universal $R$-matrix and antipode are different. The twisted quantum group $U_q^F(\widehat{\mathfrak{sl}}_n)$ also has a natural module $V_n(z).$ (We will abuse notation, but it should be clear throughout the paper when $V_n(z)$ is the natural module of the twisted or untwisted quantum group.) Since the twisted and untwisted quantum groups are the same as algebras, the action $U_q^F(\widehat{\mathfrak{sl}}_n)$ on $V_n(z)$ is the same as the action given in \eqref{qgaction}. However, since the comultiplication is different, the action of the twisted and untwisted quantum groups on $V_n(z)^{\otimes N}$ for $N>1$ will be different.

\begin{remark}
The quantum group in \cite{ReshetikhinMultiparameter} is defined over $\C[[h]]$ as opposed to being defined over $\C(q)$ as in our case. It follows that $F$ defined in \eqref{eq:definitionofF} does not live in $\Uq \otimes \Uq$, but in a certain completion of the tensor product. Similarly, the universal $R$-matrix $\mathcal{R}$ also lives in a completion of $\Uq \otimes \Uq$. These facts will not be problematic for our purposes.
\end{remark}

Recall the definition of $u_i$ from equation \eqref{vtoutransition} and denote the tensor product $u_i \otimes u_j$ by $u_{ij}$. 

\begin{lemma}\label{lemma:F-action}
The elements $F$ (defined in equation \eqref{eq:definitionofF}) and $F_{21}$ act on $V_n(z)^{\otimes 2}$ as follows:
\begin{equation}\label{eq:F-action}
    \begin{split}
        F &: u_i \otimes u_j \mapsto \sqrt{\alpha_{ij}} u_i \otimes u_j, \\
        F_{21} &: u_i\otimes u_j \mapsto \sqrt{\alpha_{ji}} u_i \otimes u_j,
    \end{split}
\end{equation}
where $\alpha_{ii} = 1$ and $\alpha_{ij}$ when $i \neq j$ is given by
\begin{equation}\label{eq:alphaij}
\alpha_{ij} = \exp (2a_{i,j} -2a_{i-1,j}-2a_{i,j-1} +2a_{i-1,j-1})
\end{equation} 
\end{lemma}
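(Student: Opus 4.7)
The plan is to show that both $F$ and $F_{21}$ act diagonally in the basis $\{u_k \otimes u_l\}$ and then to read off their eigenvalues. First I would use \eqref{qgaction} together with the transition \eqref{vtoutransition} to observe that each Cartan generator $K_s$ acts on $u_k$ by multiplication by $q^{h_s(k)}$, where
\begin{equation*}
    h_s(k) := \delta_{s,[k]} - \delta_{s+1,[k]} = \delta_{s,[k]} - \delta_{s,[k]-1},
\end{equation*}
with $[k] \in \{1,\ldots,n\}$ denoting the residue of $k$ modulo $n$. Equivalently, $H_s$ is diagonal in the $u$-basis with eigenvalue $h_s(k)$ on $u_k$. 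Since the $H_s$ mutually commute and are simultaneously diagonal in the $u$-basis, every $H_i \otimes H_j$, and hence the antisymmetric combination appearing in the exponent of $F$, is diagonal on $u_k \otimes u_l$.

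Consequently, $F(u_k \otimes u_l) = \exp(\Sigma_{kl})\,u_k \otimes u_l$, where
\begin{equation*}
    \Sigma_{kl} = \sum_{i<j} a_{ij}\bigl(h_i(k)\,h_j(l) - h_j(k)\,h_i(l)\bigr).
\end{equation*}
Extending $a$ to a skew-symmetric array by declaring $a_{ji} := -a_{ij}$ for $i<j$ and $a_{ii} := 0$, the antisymmetrization is absorbed into the summation, giving $\Sigma_{kl} = \sum_{s,t} a_{st}\,h_s(k)\,h_t(l)$. The point now is that $h_s(k) = \delta_{s,[k]} - \delta_{s,[k]-1}$ is a first difference in $s$: performing the sum in $s$ turns $\sum_s a_{st} h_s(k)$ into $a_{[k],t} - a_{[k]-1,t}$, and then applying the same telescoping in $t$ collapses $\Sigma_{kl}$ to
\begin{equation*}
    \Sigma_{kl} = a_{[k],[l]} - a_{[k]-1,[l]} - a_{[k],[l]-1} + a_{[k]-1,[l]-1}.
\end{equation*}
Therefore the scalar factor on $u_k \otimes u_l$ is $\sqrt{\alpha_{kl}} = \exp(\Sigma_{kl})$ with $\alpha_{kl} = \exp(2\Sigma_{kl})$, which is exactly \eqref{eq:alphaij}; when $[k]=[l]$ the skew-symmetry of $a$ forces $\Sigma_{kk} = 0$ and hence $\alpha_{kk}=1$.

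For $F_{21}$, swapping the tensor legs negates each term in the antisymmetric combination that exponentiates to give $F$, so if $F = \exp(T)$ then $T_{21} = -T$ and $F_{21} = \exp(-T) = F^{-1}$. Hence $F_{21}$ acts on $u_k \otimes u_l$ as multiplication by $\exp(-\Sigma_{kl})$; using the skew-symmetry of $a$ one checks that $-\Sigma_{kl} = \Sigma_{lk}$, matching the claimed eigenvalue $\sqrt{\alpha_{lk}}$. I expect the main obstacle to be purely bookkeeping: making sense of the indices $[k]-1$ and $[l]-1$ modulo $n$, and fixing the antisymmetric extension of the coefficients $a_{ij}$ so that the expression in \eqref{eq:alphaij} is unambiguous. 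Once these conventions are in place, the computation reduces to a routine discrete-derivative collapse.
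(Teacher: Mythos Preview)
Your proposal is correct and follows exactly the same approach as the paper's proof, which consists of the single line ``This follows from noting that $H_i : u_{j} \mapsto (\delta_{i,j}-\delta_{i,j-1})u_{j}$.'' You have simply filled in the details the paper leaves to the reader: the diagonal action of the Cartan elements, the antisymmetric extension of the coefficients $a_{ij}$, the telescoping of the double sum, and the observation that $F_{21}=F^{-1}$ via $T_{21}=-T$. Your caveat about bookkeeping conventions for the indices $[k]-1$, $[l]-1$ and the extension of $a_{ij}$ outside the range $1\le i<j\le n$ is well taken and is indeed the only place where care is needed.
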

\begin{proof}
This follows from noting that $H_i : u_{j} \mapsto (\delta_{i,j}-\delta_{i,j-1} )u_{j}$.
\end{proof}

\begin{proposition}\label{proposition:DaffineRmatrix}
The affine $R$-matrix $R^F(z_{i},z_{i+1})$ corresponding to $U_q^F(\widehat{\mathfrak{sl}}_n)$ is
\begin{equation}
\begin{split}
\tau R^F(z_i, z_{i+1}) = \sum_i (q z_i -q^{-1}z_{i+1}) e_{ii}\otimes e_{ii} + \sum _{i \neq j} \alpha_{ij} (z_i - z_{i+1})e_{ji} \otimes e_{ij} \\+ \sum_{i>j} (q-q^{-1})z_i e_{jj} \otimes e_{ii} + (q-q^{-1})z_{i+1} e_{ii} \otimes e_{jj}.
\end{split}
\end{equation}
\end{proposition}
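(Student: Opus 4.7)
The plan is a direct matrix computation. By the definition of the twisted universal $R$-matrix in equation \eqref{eq:DtwistDeltaRmatrix}, I have $\mathcal{R}^F = F_{21}\mathcal{R}F^{-1}$, so passing to the representation on $V_n(z_i)\otimes V_n(z_{i+1})$ gives
\[
R^F(z_i,z_{i+1}) \;=\; F_{21}\,R(z_i,z_{i+1})\,F^{-1},
\]
where $F$ and $F_{21}$ act on $V_n(z)^{\otimes 2}$ as described by Lemma~\ref{lemma:F-action}. The crucial observation is that both $F$ and $F_{21}$ are \emph{diagonal} in the basis $\{u_a\otimes u_b\}$. Since conjugation of any linear operator by a diagonal operator preserves diagonal matrix entries and only rescales off-diagonal entries, we may proceed term by term in the given formula for $\tau R$.

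The decomposition of $\tau R$ has four types of summands. The first, $\sum_a(qz_i-q^{-1}z_{i+1})e_{aa}\otimes e_{aa}$, is diagonal in $\{u_a\otimes u_b\}$ since it fixes each $u_a\otimes u_a$. The two ``Cartan'' sums $\sum_{a>b}(q-q^{-1})z_i\,e_{bb}\otimes e_{aa}$ and $\sum_{a>b}(q-q^{-1})z_{i+1}\,e_{aa}\otimes e_{bb}$ are likewise diagonal, since $e_{bb}\otimes e_{aa}$ merely projects onto $u_b\otimes u_a$ and leaves it fixed. Hence all three of these sums are carried to themselves by conjugation with $F_{21}$ and $F^{-1}$, matching the corresponding sums in the claimed formula for $\tau R^F$.

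The only remaining contribution is the swap term $\sum_{a\neq b}(z_i-z_{i+1})e_{ba}\otimes e_{ab}$, for which $e_{ba}\otimes e_{ab}$ sends $u_a\otimes u_b\mapsto u_b\otimes u_a$. Conjugation by $F^{-1}$ on the right produces a factor $1/\sqrt{\alpha_{ab}}$ coming from the input $u_a\otimes u_b$, while conjugation by $F_{21}$ on the left produces the factor $\sqrt{\alpha_{ba}}$ coming from the output $u_b\otimes u_a$. The result is a rescaling by $\sqrt{\alpha_{ba}}/\sqrt{\alpha_{ab}}$, which by the symmetry properties of the exponent in the definition of $F$ (equivalently, by $\tau F\tau = F^{-1}$, which gives the relation $\alpha_{ab}\alpha_{ba}=1$) simplifies to a single power of $\alpha_{ab}$ written $\alpha_{ij}$ in the index convention of the Proposition. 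Summing over $a\neq b$ and reassembling the four contributions yields the asserted formula.

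The whole argument is bookkeeping once one has recognized that $F,F_{21}$ are diagonal in the $u$-basis; the only real point to check is the scalar produced by conjugation on the swap terms, which is the expected factor from Lemma~\ref{lemma:F-action}. There is no nontrivial algebraic obstacle, but one must be careful with the convention for which index of $\alpha$ comes from $F$ and which from $F_{21}$.
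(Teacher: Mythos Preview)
Your approach is essentially the same as the paper's: the paper simply says the result follows immediately from the action of $F^{-1}$ and $F_{21}$ on $V_n(z)^{\otimes 2}$ given in Lemma~\ref{lemma:F-action}, and you carry out that conjugation explicitly, term by term.

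One bookkeeping slip to flag. You correctly start from $R^F = F_{21}\,R\,F^{-1}$, but when you pass to $\tau R^F$ you should use $\tau F_{21} = F\tau$, so that $\tau R^F = F\,(\tau R)\,F^{-1}$, i.e.\ the conjugation of $\tau R$ is by $F$ on the left, not $F_{21}$. In fact the scalar you write down for the output $u_b\otimes u_a$, namely $\sqrt{\alpha_{ba}}$, is exactly the eigenvalue of $F$ (not of $F_{21}$, which would give $\sqrt{\alpha_{ab}}$) on that vector, so your numerical computation is right and only the label is off. Relatedly, note that $\sqrt{\alpha_{ba}}/\sqrt{\alpha_{ab}} = \alpha_{ba}$ (using $\alpha_{ab}\alpha_{ba}=1$), so be careful when identifying this with the index in the Proposition's formula; your own closing caveat about index conventions is well placed.
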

\begin{proof}
Note that $R^F(z_i, z_{i+1})$ is the action of the universal $R$-matrix $\mathcal{R}^F$ on the representation $V_n(z)^{\otimes 2}$. The result follows immediately after using the action of $F^{-1}$ and $F_{21}$ on $V_n(z)^{\otimes 2}$ from Lemma \ref{lemma:F-action}.  
\end{proof}

Given $U_q^F(\widehat{\mathfrak{sl}}_n)$ with $R$-matrix $R^F(z_{i},z_{i+1})$ as in Proposition \ref{proposition:DaffineRmatrix} which depends on complex numbers  $a_{ij}$, denote by $\bm{\alpha}$ the set of numbers $\alpha_{ij}, 1 \leq i, j \leq n$ obtained from $a_{ij}$ using equation \eqref{eq:alphaij}. From now on we will write the dependence of the Drinfeld twisting in terms of $\bm{\alpha}$ instead of $F$ (so we write $\UqD$ instead of $U_q^F(\widehat{\mathfrak{sl}}_n)$ and $R^{\bm{\alpha}}(z_{i},z_{i+1})$ instead of $R^F(z_{i},z_{i+1})$). Even though there are different choices of $F$ that produce the same set $\bm{\alpha}$, we will not distinguish between such quantum groups. For our purposes, Drinfeld twists by different $F$'s with the same $\bm{\alpha}$'s will correspond to the same six-vertex models in future sections. 

One should keep in mind that for $\alpha_{ij}=1$, $\UqD$ is the non-twisted quantum groups $\Uq$ and that $\alpha_{ij} \alpha_{ji}=1 = \alpha_{ii}$ for any $\bm{\alpha}$. 
A standard, though tedious, computation shows:

\begin{proposition}
There is an action of the Hecke algebra $\aH$ on $V_n(z)^{\otimes N}$ where $y_i$ acts by multiplication with $z_i^{-1}$ and 
\begin{equation}\label{heckeaction2D}
(v_{\mathbf{j}}\otimes \mathbf{z} ) \cdot T_i  = (q^2-1) \frac{z_i}{z_i-z_{i+1}} v_{\mathbf{j}} \otimes \mathbf{z} -  \frac{q}{z_i-z_{i+1}} (\tau R^{\bm{\alpha}}_q)_{i,i+1} (z_{i},z_{i+1}) v_{\mathbf{j}} \otimes \mathbf{z}^{s_i}. 
\end{equation}
\end{proposition}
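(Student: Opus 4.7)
The strategy is to verify each defining relation of $\aH$ on $V_n(z)^{\otimes N}$, exploiting the fact that formula \eqref{heckeaction2D} is structurally identical to \eqref{heckeaction2} with $R_q$ replaced by the twisted $R$-matrix $R^{\bm{\alpha}}_q$. Since the untwisted version was already shown to satisfy the Hecke relations (via its equivalence with \eqref{heckeaction1} in Theorem~\ref{thm:heckeactionsagree}), the whole point is to track what the twist does. The key conceptual input is that $\UqD$ remains a quasitriangular Hopf algebra with universal $R$-matrix $\mathcal{R}^{\bm\alpha}=F_{21}\mathcal{R}F^{-1}$, so its evaluation $R^{\bm\alpha}(z_i,z_{i+1})$ on $V_n(z)^{\otimes 2}$ still satisfies the parametrized Yang-Baxter equation \eqref{paramybe} as well as unitarity.

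First I would dispatch the easy relations. The $y$-relations hold automatically because each $y_i$ acts as multiplication by the scalar $z_i^{-1}$, which commutes with everything. The commutation $T_i T_j = T_j T_i$ for $|i-j|>1$ is immediate since $T_i$ and $T_j$ modify disjoint tensor slots. The commutation $y_j T_i = T_i y_j$ for $j\neq i,i+1$ holds because $T_i$ leaves the $j$-th tensor factor and the variable $z_j$ untouched; the $y$-Bernstein relation $T_i y_i T_i = q^2 y_{i+1}$ can then be checked by a direct computation using \eqref{heckeaction2D}, where the coefficient functions in $z_i,z_{i+1}$ conspire in the same way as in the untwisted case (the scalars $\alpha_{ij}$ drop out because the relation is an identity in the scalar algebra of $z_i$'s after the $v_{\mathbf{j}}$-components match).

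Next I would verify the quadratic relation $T_i^2 = (q^2-1)T_i + q^2$. Applying \eqref{heckeaction2D} twice reduces this to the identity
\begin{equation*}
\bigl(\tau R^{\bm\alpha}_q\bigr)_{i,i+1}(z_i,z_{i+1})\,\bigl(\tau R^{\bm\alpha}_q\bigr)_{i,i+1}(z_{i+1},z_i) = q^{2}(z_i-z_{i+1})(z_{i+1}-z_i)\cdot\operatorname{id},
\end{equation*}
which is the unitarity of the twisted $R$-matrix. This can be checked directly from the formula in Proposition~\ref{proposition:DaffineRmatrix}: the diagonal $e_{ii}\otimes e_{ii}$ contributions give $(qz_i-q^{-1}z_{i+1})(qz_{i+1}-q^{-1}z_i)$, and the off-diagonal pieces for a pair $i\neq j$ combine into a $2\times 2$ block whose determinant picks up the factor $\alpha_{ij}\alpha_{ji}=1$ by Lemma~\ref{lemma:F-action}, making the twist invisible to unitarity.

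The main obstacle, and the only genuinely non-trivial step, is the braid relation $T_iT_{i+1}T_i = T_{i+1}T_iT_{i+1}$ on $V_n(z)^{\otimes 3}$. Expanding each side using \eqref{heckeaction2D} and collecting coefficients, the equation separates into a piece that is automatic (diagonal terms, handled by symmetric rational-function identities in the $z_j$) and a piece that reduces to the parametrized Yang-Baxter equation
\begin{equation*}
R^{\bm\alpha}_{12}(z_1,z_2)\,R^{\bm\alpha}_{13}(z_1,z_3)\,R^{\bm\alpha}_{23}(z_2,z_3) = R^{\bm\alpha}_{23}(z_2,z_3)\,R^{\bm\alpha}_{13}(z_1,z_3)\,R^{\bm\alpha}_{12}(z_1,z_2).
\end{equation*}
This holds by the general theory of Drinfeld twisting of quasitriangular Hopf algebras (Reshetikhin \cite{ReshetikhinMultiparameter}): since $\mathcal{R}$ satisfies the abstract YBE in $\UqD$ and $\mathcal{R}^{\bm\alpha}=F_{21}\mathcal{R}F^{-1}$, the evaluations satisfy the parametrized YBE. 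Alternatively one can verify it by hand from Proposition~\ref{proposition:DaffineRmatrix}, using only $\alpha_{ii}=1$, $\alpha_{ij}\alpha_{ji}=1$, and the cocycle-type identity $\alpha_{ij}\alpha_{jk}\alpha_{ki}=\alpha_{ji}\alpha_{kj}\alpha_{ik}$ that follows from \eqref{eq:alphaij}. Once the braid relation is secured, all defining relations of $\aH$ are in place and the proposition follows; this is why the authors characterize the verification as standard though tedious.
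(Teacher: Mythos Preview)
Your proposal is correct and is exactly the kind of direct verification the paper alludes to with ``a standard, though tedious, computation.'' The reduction of the quadratic relation to unitarity of $\tau R^{\bm\alpha}$ (where $\alpha_{ij}\alpha_{ji}=1$ makes the twist invisible) and of the braid relation to the parametrized Yang--Baxter equation for $R^{\bm\alpha}$ (guaranteed by Drinfeld twist theory, since $\mathcal{R}^{\bm\alpha}=F_{21}\mathcal{R}F^{-1}$ for a cocycle $F$) is the standard route, and your handling of the $y$-relations and the Bernstein relation is fine.

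One small caution: the ``cocycle-type identity'' $\alpha_{ij}\alpha_{jk}\alpha_{ki}=\alpha_{ji}\alpha_{kj}\alpha_{ik}$ you offer for the by-hand alternative is equivalent to $(\alpha_{ij}\alpha_{jk}\alpha_{ki})^2=1$, and this does not follow from \eqref{eq:alphaij} in general. Fortunately it is not needed. A cleaner way to see this---and in fact to obtain all the Hecke relations at once---is to observe that $\tau R^{\bm\alpha}=F(\tau R)F^{-1}$ on $V_n(z)^{\otimes 2}$, and more globally that the diagonal operator $D=\prod_{a<b}F_{a,b}$ on $V_n(z)^{\otimes N}$ satisfies $D^{-1}T_i D = T_i^{\bm\alpha}$ for every $i$ (the cross-factors $F_{c,i}$ and $F_{c,i+1}$ cancel, leaving only the factor from $F_{i,i+1}$). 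Since $D$ is diagonal and independent of the $z$-exponents, it also commutes with each $y_i$. Thus the twisted action is conjugate to the untwisted one by a single invertible operator, and every relation in $\aH$ transfers automatically. This bypasses the term-by-term expansion of the braid relation and explains conceptually why no identity among the $\alpha_{ij}$ beyond $\alpha_{ij}\alpha_{ji}=1$ is required.
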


Equation \eqref{heckeaction2D} can be rewritten, via the same process as in the proof of Theorem \ref{thm:heckeactionsagree}, as
\begin{equation}\label{heckeaction1D}
(v_{\mathbf{j}}\otimes \mathbf{z} ) \cdot T_i= \left\{
	\begin{array}{ll}
		(1-q^2)v_{\mathbf{j}}\otimes \frac{z_{i+1}\mathbf{z}^{s_i} -z_i \mathbf{z}}{z_{i}-z_{i+1}} - q \alpha_{j_i j_{i+1}} v_{s_i(\mathbf{j})} \otimes \mathbf{z}^{s_i}  & \mbox{if } j_i < j_{i+1} \\
		(1-q^2)v_{\mathbf{j}}\otimes \frac{z_{i}(\mathbf{z}^{s_i} -\mathbf{z})}{z_{i}-z_{i+1}} -  v_{s_i(\mathbf{j})} \otimes \mathbf{z}^{s_i}  & \mbox{if } j_i = j_{i+1} \\
		(1-q^2)v_{\mathbf{j}}\otimes \frac{z_{i}(\mathbf{z}^{s_i} -  \mathbf{z})}{z_{i}-z_{i+1}} - q \alpha_{j_i j_{i+1}} v_{s_i(\mathbf{j})} \otimes \mathbf{z}^{s_i}  & \mbox{if } j_i > j_{i+1} \\
	\end{array}
\right. . 
\end{equation}

\begin{proposition}
The action of $T_i$ in equation \eqref{heckeaction1D} is an $\UqD$-module homomorphism. 
\end{proposition}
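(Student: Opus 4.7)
The plan is to reduce the statement to the two-factor case $N=2$ and then deduce it from the intertwining property of the twisted universal $R$-matrix. Throughout, I would work with the $R$-matrix form \eqref{heckeaction2D} of the Hecke action, since it makes the connection to quasitriangularity manifest, while remembering that by the calculation in the proof of Theorem~\ref{thm:heckeactionsagree} this is the same operator as \eqref{heckeaction1D}.

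For the reduction, by coassociativity of $\Delta^F$ we may write the iterated coproduct as
\[
\Delta^{F,(N)}(x) = \bigl(\mathrm{id}^{\otimes(i-1)} \otimes \Delta^F \otimes \mathrm{id}^{\otimes(N-i-1)}\bigr)\Delta^{F,(N-1)}(x),
\]
so on positions $i,i+1$ the $\UqD$-action is governed by a single copy of $\Delta^F$. Since $T_i$ acts as the identity on the tensor factors outside $\{i,i+1\}$ (apart from the swap $z_i \leftrightarrow z_{i+1}$, which is in any case what the quantum group sees in those two positions after the intertwiner acts), the commutation $[T_i,\Delta^{F,(N)}(x)] = 0$ reduces to $[T_1,\Delta^F(y)] = 0$ on $V_n(z_1)\otimes V_n(z_2)$ for arbitrary $y\in\UqD$.

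For the $N=2$ case, I would invoke the twisted intertwining identity $\mathcal{R}^F\Delta^F(y) = (\Delta^F)^{\mathrm{op}}(y)\mathcal{R}^F$, which follows directly by substituting $\Delta^F(y) = F\Delta(y)F^{-1}$ and $\mathcal{R}^F = F_{21}\mathcal{R}F^{-1}$ into the corresponding untwisted relation. On the representation $V_n(z_1)\otimes V_n(z_2)$ this becomes $R^{\bm{\alpha}}(z_1,z_2)\Delta^F(y) = (\Delta^F)^{\mathrm{op}}(y)R^{\bm{\alpha}}(z_1,z_2)$, and composing with $\tau$ shows that $(\tau R^{\bm{\alpha}})(z_1,z_2)$ is $\UqD$-equivariant when one accounts for the swap of the two factors. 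The formal substitution $\mathbf{z} \mapsto \mathbf{z}^{s_1}$ appearing next to $\tau R^{\bm{\alpha}}$ in \eqref{heckeaction2D} matches precisely this swap, since swapping $z_1 \leftrightarrow z_2$ simply relabels the two copies of $V_n(z)$. The remaining scalar term $(q^2-1)z_1/(z_1-z_2)$ times the identity on the $v$-factors commutes with $\Delta^F(y)$ because the only generators that move the $z_i$'s are $E_0,F_0$, which act by monomial shifts, and the rational prefactor transforms compatibly when combined with the companion $\tau R^{\bm{\alpha}}$-term.

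The main obstacle is the bookkeeping in the last step: one has to track carefully how the powers of $z_1,z_2$ introduced by $E_0,F_0$ interact with both the rational coefficient $z_1/(z_1-z_2)$ and the $R$-matrix piece with its swap $\mathbf{z}^{s_1}$, and confirm that these recombine. A cleaner alternative I would pursue if the direct approach becomes opaque is the following conjugation argument: in positions $i,i+1$, the diagonal operator $D:v_{\mathbf{j}} \mapsto \sqrt{\alpha_{j_i j_{i+1}}}\,v_{\mathbf{j}}$ (which by Lemma~\ref{lemma:F-action} is exactly the action of $F$) satisfies $D^{-1}T_i^{\mathrm{untw}}D = T_i^{\mathrm{tw}}$ on the two factors, as is checked by the same manipulation that appears in the proof of Theorem~\ref{thm:heckeactionsagree} (using $\alpha_{ij}\alpha_{ji}=1$), and simultaneously conjugates the untwisted $\Delta$-action into $\Delta^F$. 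Combined with the $N=2$ reduction, this transports the known commutation of the GRV action with $\Uq$ (cited after \eqref{heckeaction1}) to the twisted setting without further computation.
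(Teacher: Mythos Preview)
Your proposal is correct and aligns with the paper's own argument. The paper gives two proofs: a ``non-computational'' one observing that by \eqref{heckeaction2D} the operator $T_i$ is a linear combination of the identity map and $(\tau R^{\bm{\alpha}})_{i,i+1}(z_i,z_{i+1})$, both of which are $\UqD$-module homomorphisms; and a direct calculation using $\Delta^F = F\Delta F^{-1}$ together with the explicit action of $F$ from Lemma~\ref{lemma:F-action}. Your main argument is precisely the first, and your ``cleaner alternative'' via conjugation by $D=F$ is precisely the second.

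One simplification: you spend effort worrying about whether the scalar prefactor $(q^2-1)z_i/(z_i-z_{i+1})$ commutes with $E_0$, $F_0$. This is automatic and needs no bookkeeping. Multiplication by any rational function in the $z_j$'s commutes with the entire $\UqD$-action on $V_n(z)^{\otimes N}$, because each generator acts $\mathbb{C}[z_j^{\pm 1}]$-linearly on the $j$-th factor (the only effect of $E_0$, $F_0$ on the $z$-variable is to shift its degree, and degree shifts commute with multiplication by $z_j$). So the first term in \eqref{heckeaction2D} is trivially a module map, and the only content lies in the second term---which, as you say, follows from the twisted intertwining identity for $\mathcal{R}^F$.
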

\begin{proof}
Note that $a \in \UqD$ acts on $V_n(z)^{\otimes 2}$ via $\Delta^F(a) = F \Delta(z) F^{-1}$ and using the action of $F$ and $F^{-1}$ on $V_n(z)^{\otimes 2}$ from equation \eqref{eq:F-action}, the proof becomes a routine calculation. 

A non-computational proof goes as follows: by equation \eqref{heckeaction2D}, the action of $T_i$ on $V_n(z)^{\otimes N}$ is a linear combination of the identity map and $(\tau R)_{i,i+1}(z_i, z_{i+1})$, both of which are $\UqD$-module homomorphisms.
\end{proof}

It follows that the right action of $\aH$ from equation \eqref{heckeaction1D} (which depends on $\bm{\alpha}$) and the left action of $\UqD$ on $V_n(z)^{\otimes N}$ commute.

\subsection{The quantum wedge}

We now define the exterior product of $V_n(z)$ following \cite{KMS}. 

Define the $q$-antisymmetrizing operator $A^{(N)}$ acting on $V_n(z)^{\otimes N}$ to be
\[ A^{(N)} = \sum_{\sigma \in S_N} T_{\sigma}. \]
where $T_\sigma$ was defined in~\eqref{eq:T-sigma}.

\begin{proposition}
The $\UqD$-module $V_n(z)^{\otimes N}$ decomposes as 
\[ V_n(z)^{\otimes N} = \Im A^{(N)} \oplus \Ker A^{(N)}  \]
and the space $\Ker A^{(N)}$ is the sum of the kernels of the operators $T_i+1$ for $1 \leq i \leq N-1$. 
\end{proposition}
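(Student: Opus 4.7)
The plan is to show that $A^{(N)}$, rescaled by a nonzero constant, is an idempotent, which immediately yields the direct sum decomposition, and then to identify the kernel using the eigenspace structure of each $T_i$.

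First I would establish the identity
\[
A^{(N)} T_i \;=\; q^2\, A^{(N)} \qquad \text{for every } i,
\]
by partitioning $S_N$ into pairs $\{\sigma, \sigma s_i\}$ with $\ell(\sigma s_i) > \ell(\sigma)$. For each such pair, using $T_\sigma T_{s_i} = T_{\sigma s_i}$ together with the quadratic relation $T_i^2 = (q^2-1)T_i + q^2$, the two summands $T_\sigma T_{s_i} + T_{\sigma s_i} T_{s_i}$ collapse to $q^2(T_\sigma + T_{\sigma s_i})$. Iterating along a reduced expression of $\sigma$ gives $A^{(N)} T_\sigma = q^{2\ell(\sigma)} A^{(N)}$, and summing over $S_N$ yields
\[
(A^{(N)})^2 \;=\; \Bigl(\sum_{\sigma \in S_N} q^{2 \ell(\sigma)}\Bigr) A^{(N)} \;=\; [N]_{q^2}!\, A^{(N)}.
\]
Since $q$ is generic, $[N]_{q^2}!$ is nonzero, so $\frac{1}{[N]_{q^2}!} A^{(N)}$ is an idempotent in $\End(V_n(z)^{\otimes N})$, giving the direct sum decomposition $V_n(z)^{\otimes N} = \Im A^{(N)} \oplus \Ker A^{(N)}$. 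This is automatically a decomposition of $\UqD$-modules because the $\aH$-action commutes with $\UqD$.

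Next I would extract two consequences of $A^{(N)} T_i = q^2 A^{(N)}$. On the one hand, $\Im A^{(N)} \subseteq \bigcap_i \ker(T_i - q^2)$, with equality because any $v$ on which every $T_i$ acts by $q^2$ satisfies $v \cdot A^{(N)} = [N]_{q^2}!\, v$ and hence lies in $\Im A^{(N)}$. On the other hand, rewriting the identity as $A^{(N)}(T_i+1) = (q^2+1)A^{(N)}$ shows that whenever $(T_i+1)v = 0$ we have $(q^2+1) A^{(N)} v = 0$, and hence $A^{(N)}v = 0$. This gives the easy inclusion $\sum_i \ker(T_i+1) \subseteq \Ker A^{(N)}$.

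The main obstacle is the reverse inclusion $\Ker A^{(N)} \subseteq \sum_i \ker(T_i+1)$. Here I would invoke semisimplicity of the finite Hecke algebra $H_N$ at generic $q$ to decompose $V_n(z)^{\otimes N}$ as an $H_N$-module into a direct sum of irreducibles $V_\lambda$. The trivial representation (where each $T_i$ acts as $q^2$) is exactly $\Im A^{(N)}$, so $\Ker A^{(N)}$ equals the sum of the non-trivial isotypic components. For any non-trivial irreducible summand $V_\lambda$, some $T_i$ has a nonzero $(-1)$-eigenspace in $V_\lambda$, so $U_\lambda := \sum_i (V_\lambda \cap \ker(T_i+1))$ is a nonzero subspace. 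The key step is to verify that $U_\lambda$ is $H_N$-stable: for $u \in V_\lambda \cap \ker(T_i+1)$ and $|i-j|>1$, commutation gives $T_j u \in \ker(T_i+1) \subseteq U_\lambda$; for $j = i \pm 1$, a short manipulation with the quadratic relation shows that $T_j u = q^2 u - (q^2+1) P_j^- u$, where $P_j^- = \frac{q^2 - T_j}{q^2+1}$ is the projection onto $\ker(T_j+1)$, and both terms lie in $U_\lambda$. Irreducibility of $V_\lambda$ then forces $U_\lambda = V_\lambda$, so every non-trivial isotype is contained in $\sum_i \ker(T_i+1)$, completing the proof.
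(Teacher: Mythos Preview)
Your proof is correct and self-contained, whereas the paper simply defers to Propositions~1.1 and~1.2 of~\cite{KMS} and notes that their argument goes through unchanged for the Drinfeld-twisted Hecke action. Your strategy---establishing $A^{(N)}T_i = q^2 A^{(N)}$ to get $(A^{(N)})^2 = [N]_{q^2}!\,A^{(N)}$ and hence the direct-sum decomposition, then using semisimplicity of $H_N$ at generic $q$ to show every non-trivial irreducible summand is already contained in $\sum_i \ker(T_i+1)$---is a clean way to unpack what the citation hides.

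One minor simplification: the case split in your stability argument for $U_\lambda$ is unnecessary. The identity $T_j u = q^2 u - (q^2+1)P_j^- u$ is just the spectral decomposition of $T_j$ and holds for \emph{every} $j$ and \emph{every} $u$; since $q^2 u \in V_\lambda \cap \ker(T_i+1)$ and $P_j^- u \in V_\lambda \cap \ker(T_j+1)$, you get $T_j u \in U_\lambda$ immediately, without ever invoking commutation or the braid relation.
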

\begin{proof}
See Propositions 1.1 and 1.2 in \cite{KMS}. Their proof goes through unchanged even for the new action of $\aH$ on $V_n(z)^{\otimes N}$ from equation \eqref{heckeaction1D}.
\end{proof}

In order to understand the spaces $\Ker (T_i +1)$ which determine $\Ker(A^{(N)})$, take $N=2$ and $T:= T_1$. 

Given integers $m$ and $l$, let $k_1,k_2 \in \Z $ and $1\leq j_1,j_2 \leq n$ be
  such that $l=j_1-k_1n$ and $m=j_2-k_2n$ so that $u_l=v_{j_1} z^{k_1}$ and $u_m=v_{j_2} z^{k_2}$. For such integers $m$ and $l$, define $\alpha_{lm} := \alpha_{ij}$. Then the following elements in $V_n(z) \otimes V_n(z)$ are in $\Ker(T+1)$:  
  
\begin{equation}\label{eq:relationsinkernel}
\begin{aligned}
& u_{l} \otimes u_m + u_m \otimes u_l & & \text{ if } l \equiv m \text{ mod } n \\[1em]
& \begin{multlined}[b]
    u_l \otimes u_m +q \alpha_{l,m} u_m \otimes u_l +{} \\ + u_{m-i} \otimes u_{l+i} + q \alpha_{m-i, l+i} u_{l+i} \otimes u_{m-i}
\end{multlined}
& & \text{ if } m-l \equiv i \text{ mod } n \text{ and } 0 < i < n. 
\end{aligned}
\end{equation}

Note that $\alpha_{l,m} = \alpha_{m-i,l+i}$ when $m-l \equiv i \text{ mod } n$ and $0<i<n$. 

Define the quantum wedge $\Lambda^2 V_n(z)$ to be the quotient $V_n(z)^{\otimes 2}/ \Ker A^{(2)}$ and denote by $u_l \wedge u_m$ the image of $u_l \otimes u_m$ in $\Lambda^2 V_n(z)$. 
It is easy to see from equation \eqref{eq:relationsinkernel} that the following relation holds in $\Lambda^2 V_n(z)$ when $m = l \text{ mod }n$:
\begin{equation}\label{eq:qwedge1}
u_l \wedge u_m = - u_m \wedge u_l.
\end{equation}

If $m, l$ are integers such that $m>l$ and $m-l \equiv i \text{ mod } n$, then consider the following sequence of ordered elements taken out of equation \eqref{eq:sequence}:
\begin{equation}\label{eq:sequence45}
\cdots > u_m > u_{m-i} > u_{m-n} > u_{m-n-i} > \cdots > u_{l+n+i} > u_{l+n} > u_{l+i} > u_l \cdots
\end{equation}

We say a wedge $u_l \wedge u_m$ is \textit{normal-ordered} if $l>m$, so that
$u_l > u_m$ in the order given by equation \eqref{eq:sequence}. For $u_m > u_l$, the following relation holds in $\Lambda^2 V_n(z)$:
\begin{equation}\label{eq:KMS45}
\begin{split}
u_l \wedge u_m  = -q \alpha_{l,m} u_m \wedge u_l + (q^2-1)(
u_{m-i} \wedge u_{l+i} - q \alpha_{l, m} u_{m-n} \wedge u_{l+n} + \\
q^2 u_{m-n-i} \wedge u_{l+n+i} - q^3 \alpha_{m,l} u_{m-2n} \wedge u_{l+2n}  \cdots )
\end{split}
\end{equation}
where the sum on the right uses entries in the sequence \eqref{eq:sequence45}
and continues as long as we get normal-ordered wedges.
Here $i$ is the unique value with $0<i<n$ and $m-i\equiv l$ modulo~$n$. 
This fact follows by applying the second line in
equation \eqref{eq:relationsinkernel} repeatedly until we obtain a formula for
$u_l \wedge u_m$ in terms of normal-ordered wedges only.

In the special case when $\alpha_{ij}=1$ we get back equation (45) in
\cite{KMS}. The specialization we need may be described as follows. Let $g$ be
a function of integers modulo $n$ that satisfies the following Assumption.

\begin{assumption}\label{assumptiong(a)}
  Let $v$ denote $q^2$. The function $g$ satisfies $g(0)=-v$, and if $a$ is
  not congruent to $0$ modulo $n$, then $g(a)g(-a)=v$.
\end{assumption}

Now let us take $\alpha_{ij} = -q^{-1} g(i-j)$ when $i\neq j$ (we always want
$\alpha_{ii}=1$.) We obtain the formula, valid if $m>l$:

\begin{equation}
    \label{eq:quantum-wedge}
    u_l \wedge u_m = \left\{ \begin{aligned}
     &- u_m \wedge u_l & \text{if $l \equiv m$ mod $n$},\\
     &\begin{multlined}[b]
    g (l - m) u_m \wedge u_l + (q^2 - 1) (u_{m - i} \wedge u_{l + i} + g (l -
   m) u_{m - n} \wedge u_{l + n} \\ 
   +q^2 u_{m - n - i} \wedge u_{l + n + i} + q^2\, g (l - m) u_{m - 2 n} \wedge u_{l + 2 n} + \cdots)
\end{multlined}
 & \text{otherwise} .
   \end{aligned} \right.
\end{equation}
As with (\ref{eq:KMS45}), $i$ is the unique value with $0<i<n$ and $m-i\equiv l$
modulo~$n$. And as with (\ref{eq:KMS45}) the summation continues as long as the
terms are of the form $u_a\wedge u_b$ with $a>b$; this is a finite sum.

Let $\Lambda^N V_n(z)$ be the quotient $V_n(z)^{\otimes N} / \Ker(A^{(N)})$. The definition of a normal-ordered wedge extends to $\Lambda^N V_n(z)$.
By identical arguments to the one in Proposition 1.3 of \cite{KMS}, one can show:
\begin{proposition}\label{proposition:generators}
$\Lambda^N V_n(z)$ is the quotient of $V_n(z)^{\otimes N}$ by the relations \eqref{eq:qwedge1} and \eqref{eq:KMS45} in each pair of adjacent factors; the elements 
\[ u_{m_1} \wedge \cdots \wedge u_{m_N} \]
where $m_1 > m_2 > \cdots > m_N$, form a basis for $\Lambda^N V_n(z)$. 
\end{proposition}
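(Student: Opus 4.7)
The plan is to mimic the argument of Proposition 1.3 in \cite{KMS}, adjusted for the twisting factors $g(l-m)$ that now appear in the relations.

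First, I would invoke the preceding proposition, which gives $\Lambda^N V_n(z) = V_n(z)^{\otimes N}/\Ker A^{(N)}$ together with $\Ker A^{(N)} = \sum_{i=1}^{N-1} \Ker(T_i + 1)$. Since $T_i$ acts only on the $i$-th and $(i+1)$-st tensor factors, $\Ker(T_i+1)$ is the image of $\Ker(T+1) \subset V_n(z)^{\otimes 2}$ under insertion into positions $i,i+1$ (tensoring on either side with arbitrary basis vectors). Combined with the explicit generators of $\Ker(T+1)$ listed in \eqref{eq:relationsinkernel}, solving each of them for the non-normal-ordered term $u_l \otimes u_m$ with $m > l$ recovers exactly the relations \eqref{eq:qwedge1} and \eqref{eq:KMS45}. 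This establishes the first assertion of the proposition, that the quotient is cut out by these relations applied in adjacent factors.

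Next I would show that the normal-ordered wedges $u_{m_1} \wedge \cdots \wedge u_{m_N}$ with $m_1 > \cdots > m_N$ span $\Lambda^N V_n(z)$. Starting from any elementary tensor, one iteratively rewrites an adjacent pair whose indices are not in strictly descending order, using \eqref{eq:qwedge1} if they are congruent modulo $n$ and \eqref{eq:KMS45} otherwise. The right-hand side of \eqref{eq:KMS45} is a finite linear combination of wedges whose indices lie in the descending sequence \eqref{eq:sequence45}, so a termination argument using a suitable lexicographic order on the multisets of indices guarantees that the rewriting process halts in finitely many steps, yielding a sum of normal-ordered wedges.

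The main obstacle is linear independence of the normal-ordered wedges, equivalently confluence of the rewriting. One route is to verify this directly via Bergman's diamond lemma, checking that for any three adjacent factors where two distinct reductions are available the two outputs agree; confluence is ultimately a consequence of the braid relation $T_1 T_2 T_1 = T_2 T_1 T_2$ in the twisted Hecke action \eqref{heckeaction1D}. A cleaner alternative is to exploit the $\Z$-grading by total $z$-degree on $V_n(z)^{\otimes N}$: it descends to a grading on the quotient with finite-dimensional graded pieces, and by Assumption~\ref{assumptiong(a)} the twisting factors $g(l-m)$ are invertible, so the relations \eqref{eq:qwedge1} and \eqref{eq:KMS45} have the same combinatorial shape as in the untwisted case. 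Thus the graded dimensions of $\Lambda^N V_n(z)$ in the twisted and untwisted settings coincide, and the linear independence established in Proposition 1.3 of \cite{KMS} transfers verbatim to the present setting, completing the proof.
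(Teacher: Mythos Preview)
Your proposal is correct and takes the same approach as the paper, which simply refers to Proposition~1.3 of \cite{KMS} and asserts that the argument carries over unchanged in the twisted setting. Your dimension-counting shortcut is cleanest if you observe that, since $F$ acts diagonally on basis tensors (Lemma~\ref{lemma:F-action}), conjugation by its iterate on $V_n(z)^{\otimes N}$ carries the untwisted $\Ker A^{(N)}$ isomorphically onto the twisted one, so the two quotients agree as graded vector spaces; this is what makes the phrase ``same combinatorial shape'' precise.
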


\begin{remark}
Note that for $n=1$, $m-l$ is always congruent to $0$ mod $n$. Therefore the
quantum wedge is defined only using relation \eqref{eq:qwedge1}. In this case
the definition of the quantum wedge is the same as the definition of the
classical ($q=1$) wedge for $\widehat{\mathfrak{sl}}_m$ for all~$m$. 
\end{remark}

\subsection{The fermionic Fock space\label{section:ffs}}

Let $S_{\infty}$ be the infinite symmetric group generated by simple reflections $s_i, i \in \mathbb{N}$. Let $\widehat{H}_\infty$ be the infinite affine Hecke algebra, with generators $T_i, y_i^{\pm}, i = 1, 2, 3, \ldots$ subject to the relations \eqref{heckerelations}. It acts on $V_n(z)\otimes V_n(z) \otimes V_n(z) \cdots $ via \eqref{heckeaction1D}; the action is well-defined because each $T_i$ acts only on a pair of adjacent factors. 

Let $\mathfrak{U}_m$ be the linear span of vectors of the form 
\[ u_{i_m} \otimes u_{i_{m-1}} \otimes u_{i_{m-2}} \otimes \cdots \]
such that $i_k = k$ for $k \ll 0$. The Fock space of level $m$ is denoted by $\F_{m}$; it is the quotient of $\mathfrak{U}_m$ by the space $\sum_i \Ker(T_i+1)$, or equivalently, by the relations \eqref{eq:qwedge1} and \eqref{eq:KMS45} in each pair of adjacent factors. 

There is a ``formal'' action of the quantum group $\UqD$ on the space $\mathfrak{U}_m$ via the coproduct \eqref{qgcomultiplication} which descends to genuine action on $\F_m$.  A basis of $\F_{m}$ is given by elements of the form 
\[ u_{i_m}\wedge u_{i_{m-1}}\wedge u_{i_{m-2}} \wedge \cdots \]
where $i_m > i_{m-1} > \cdots$ and $i_k = k$ for $k \ll 0$. Define 
\[\ket{m} := u_m \wedge u_{m-1} \wedge u_{m-3}\wedge \cdots \in \F_{m}, \] 
which we call the vacuum in $\mathfrak{F}_m$.
The Fock space $\F$ is defined as \[\F = \bigoplus_{m \in \mathbb{Z}} \F_{m}.\] 

\bigbreak
\begin{remark}
We caution the reader that due to the ``correction terms'' in
(\ref{eq:quantum-wedge}) there may be unexpected terms in many
calculations. For example, if $n = 2$, (\ref{eq:quantum-wedge}) shows that
\[ u_1 \wedge u_4 \wedge u_1 \wedge u_0 \wedge \cdots = g (- 3) u_3 \wedge u_2
   \wedge u_1 \wedge u_0 \wedge \cdots \; . \]
In the usual wedge, the left-hand side would be zero due to the repeated
factor $u_1$; however we see that this is not true in the quantum Fock space.
\end{remark}

Now let us introduce operators $J_k$ on $\F$. These operators are
$\Uq$-module endomorphisms that are denoted $B_k$ in~\cite{KMS}.
Let $u_{i_m} \wedge u_{i_{m-1}} \wedge u_{i_{m-2}} \wedge \cdots \in \mathfrak{F}_m$
and for a non-zero $k \in \mathbb{Z}$ define the displacement operator $J_k :
\mathfrak{F}_m \to \mathfrak{F}_m$ by
\begin{equation}
  \label{eq:J-def}
  J_k  (u_{i_m} \wedge u_{i_{m-1}} \wedge u_{i_{m-2}} \wedge \cdots) = (u_{i_m - nk}
  \wedge u_{i_{m-1}} \wedge u_{i_{m-2}} \wedge \cdots) + (u_{i_m} \wedge u_{i_{m-2} - nk}
  \wedge u_{i_{m-2}} \wedge \cdots) + \cdots \hspace{0.17em} .
\end{equation}
That this is indeed an action on the quantum Fock space consistent with the
quantum wedge resulting in a finite sum of wedges is shown in {\cite[Lemma 2.1]{KMS}}.
For $u_i \wedge \eta \in \mathfrak{F}_m$ we note that
\begin{equation}
  \label{eq:J-derivative} J_k  (u_i \wedge \eta) = u_{i - nk} \wedge \eta +
  u_i \wedge J_k (\eta) \hspace{0.17em} .
\end{equation}
The following commutation relation holds:
\begin{equation}
  \label{jkjmkcomm} [J_k, J_l] = k \frac{1 - v^{n |k|}}{1 - v^{|k|}} \delta_{k, - l}
  .
\end{equation}
This is Proposition~2.6 in {\cite{KMS}}. This commutator is not affected by the
Drinfeld twisting.

\section{The Main Theorem\label{deltaicefockspace}}

We recall two types of solvable lattice models called Gamma and Delta ice.
These first appeared in \cite{BrubakerBuciumasBump} in the context of
metaplectic Whittaker functions, but as we will exhibit later, they have surprising
connections to symmetric functions beyond this particular application.

Let us begin with a planar grid having a finite number $r$ of rows. The
grid may either have finitely many or infinitely many columns. We will number the
rows $1,\cdots,r$; for Delta ice, the row numbers increase
from the bottom up, and for Gamma ice, they increase from
the top down. We will also number the columns by integers, in decreasing
order. The column numbers may be all integers in the case of infinitely many columns 
or a finite interval, say $0,1,2,\cdots,N$, in the case of finitely many columns.  We will fix nonzero complex numbers $z_1,\cdots,z_r$ and
associate $z_i$ to the row numbered $i$. There are \textit{vertices} at every intersection of a row
and column, and four \textit{edges} adjacent to each vertex as in Table~\ref{tab:mweights}. A \textit{boundary edge} is
an edge that is adjacent to a single vertex.

\begin{table}[h]
\[
\begin{array}{|c|c|c|c|c|c|c|}
\hline
&\tt{a}_1&\tt{a}_2&\tt{b}_1&\tt{b}_2&\tt{c}_1&\tt{c}_2\\
\hline
\Gamma \text{-ice} &
\begin{array}{c}\hspace{-6pt}\scalebox{.75}{\gammaice{+}{+}{+}{+}{a+1}{a}}\\z^{-1}\end{array} &
\begin{array}{c}\scalebox{.75}{\gammaice{-}{-}{-}{-}{0}{0}}\\1\end{array} &
\begin{array}{c}\hspace{-6pt}\scalebox{.75}{\gammaice{+}{-}{+}{-}{a+1}{a}}\\z^{-1}\,g(a)\end{array} &
\begin{array}{c}\scalebox{.75}{\gammaice{-}{+}{-}{+}{0}{0}}\\1\end{array} &
\begin{array}{c}\scalebox{.75}{\gammaice{-}{+}{+}{-}{0}{0}}\\1-v\end{array} &
\begin{array}{c}\scalebox{.75}{\gammaice{+}{-}{-}{+}{1}{0}}\\z^{-1}\end{array} \\
\hline
\Delta \text{-ice} &
\begin{array}{c}\scalebox{.75}{\gammaice{+}{+}{+}{+}{0}{0}}\\1\end{array} &
\begin{array}{c}\scalebox{.75}{\gammaice{-}{-}{-}{-}{a}{a+1}\hspace{-6pt}}\\g(a) z\end{array} &
\begin{array}{c}\scalebox{.75}{\gammaice{+}{-}{+}{-}{0}{0}}\\1\end{array} &
\begin{array}{c}\scalebox{.75}{\gammaice{-}{+}{-}{+}{a}{a+1}\hspace{-5pt}}\\z\end{array} &
\begin{array}{c}\scalebox{.75}{\gammaice{-}{+}{+}{-}{0}{0}}\\ (1-v) z\end{array} &
\begin{array}{c}\scalebox{.75}{\gammaice{+}{-}{-}{+}{0}{1}}\\1\end{array} \\
\hline
\end{array}
\]
\caption{The Boltzmann weights for $\Gamma$ and $\Delta$ vertices associated to
a row parameter $z \in \mathbb{C}^\times$. The charge $a$ above an edge indicates any choice of
charge mod $n$ and gives the indicated weight. The weights depend on a parameter $v$ and any function $g$ with $g(0)=-v$
and $g(n-a)g(a) = v$ if $a \not\equiv 0$ mod $n$. If a configuration does not
appear in this table, its weight is zero. We take $z=z_i$ in the $i$-th row
(from the top for Gamma ice, or from the bottom for Delta ice). For Gamma ice,
the Boltzmann weights used in~\cite{BrubakerBuciumasBump} and~\cite{BBBG} are
multiplied by $z$. This change from those papers only multiplies the
partition function by a constant power of $z_1\cdots z_r$.}
\label{tab:mweights}
\end{table}

A \textit{state} of Gamma or Delta ice is given by the assignment of a 
\textit{spin} $\pm$ to each edge of the grid with certain restrictions. To each horizontally oriented edge,
we will also associate a \textit{charge} which will be an integer
$a$ modulo $n$. The combination of the spin and charge will be
called a \textit{decorated spin} and will be denoted $\pm^a$.
For Delta ice, we only allow the spin $+^a$ when $a$ is $0$
modulo $n$; for Gamma ice, we only allow $-^a$ when $a$ is
$0$ modulo~$n$. Thus in either cases, there are $n+1$
allowed decorated spins.

For the boundary edges, the spins and (for horizontal edges,
the charges) will be fixed. Their specification, together with a
set of Boltzmann weights associated to each vertex according to~\ref{tab:mweights}, 
will define what we call the \textit{system}. In this section, we will consider
systems of infinite width, whose columns are labeled
by all integers. In Section~\ref{sec:metwhit} we will
consider finite systems.

Thus let us describe the boundary conditions when the grid is infinite. 
The boundary edges are all therefore vertically oriented. Let us fix an
integer $m$ and consider two strictly decreasing
sequences of integers,
\begin{equation}
  \label{boldij} \mathbf{i}=(i_m,i_{m-1},\cdots)\,,\qquad \mathbf{j}=(j_m,j_{m-1},\cdots)
\end{equation}
such that $i_k=j_k=k$ if $0\gg k$. The associated boundary
spins along the top edge are $-$
for the edges in columns $i_m,i_{m-1},\cdots$ and
$+$ for the edges in columns $i_m,i_{m-1},\cdots$.
We similarly fix the spins along the bottom boundary to
be $-$ in columns $j_{m},j_{m-1},\cdots$ and $+$
in the others. With these data we may associate the following vectors in $\F_m$:
\[\xi=u_{\mathbf{i}}=u_{i_m}\wedge u_{i_{m-1}} \wedge \cdots \qquad \qquad
\eta=u_{\mathbf{j}}=u_{j_m}\wedge u_{j_{m-1}} \wedge \cdots . \]

A state of this infinite system thus requires assigning
spins to the internal vertical edges and decorated spins for the horizontal
internal ones. For Delta ice (resp.~Gamma ice), we require that all but
finitely many horizontal edges have spins $+^0$ (resp.~$-^0$).
Regardless of whether the grid is finite or infinite, a state $\mathfrak{s}$ of the system 
will be called \textit{admissible} if  the configuration of spins at the adjacent edges 
of every vertex is one of the configurations in a fixed row of
Table~\ref{tab:mweights}. Let $\mathfrak{S}$ denote the set of all admissible states $s$
of the system, determined by the boundary conditions and Boltzmann weights. When
no confusion may arise, we sometimes use the same notation $\mathfrak{S}$ to denote
either the system or its set of admissible states. The two systems we consider will thus be denoted
$\mathfrak{S}_{\mathbf{z},\xi,\eta,r}^\Delta$ or $\mathfrak{S}_{\mathbf{z},\xi,\eta,r}^\Gamma$ according
to the weights in row one and row two of Table~\ref{tab:mweights}, respectively.

\begin{lemma}
  \label{interleavinglemma}
  Let $r=1$ and let $\xi=u_{\mathbf{i}}, \eta=u_\mathbf{j} \in \mathfrak{F}_m$. For either $\mathfrak{S}^\Gamma$ or $\mathfrak{S}^\Delta$, there
  exists at most one admissible state for the system
  $\mathfrak{S}_{\mathbf{z},\xi,\eta,1}$, so $\langle\eta|T(z)|\xi\rangle$
  is the Boltzmann weight of this state (or zero if no admissible
  state exists). If such a state exists, then, for $\mathfrak{S}^\Delta_{\mathbf{z},\xi,\eta,1}$
  \begin{equation}
    \label{interleaving} i_m \geqslant j_m \geqslant i_{m - 1} \geqslant j_{m
    - 1} \geqslant \cdots .
  \end{equation}
  For $\mathfrak{S}^\Gamma_{z,\xi,\eta,1}$, we have instead
  $j_m\geqslant i_m\geqslant \cdots$.
\end{lemma}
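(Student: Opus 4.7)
My plan is to analyze a single row by sweeping column by column from one end to the other, showing that the vertex type at each column is uniquely forced by the horizontal spin on one side together with the top and bottom boundary spins. I will treat Delta ice in detail, and indicate at the end how the Gamma case follows by a parallel argument.

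For the uniqueness, I would first enumerate the six allowed Delta vertex configurations by the triple $(L, T, B)$ of (left horizontal spin, top spin, bottom spin). Inspecting Table~\ref{tab:mweights}, exactly six of the eight possible such triples are realized, and the two missing ones are $(+,+,-)$ and $(-,-,+)$. Thus, given $L$ and the boundary pair $(T, B)$ at a column, either there is a unique vertex type which then forces the right horizontal spin, or the configuration is forbidden. Starting from the far-left boundary condition that all but finitely many horizontal edges have spin $+^{0}$ (for Delta ice), the initial horizontal spin is $+$ and the spins propagate uniquely across the row. The charges along horizontal edges are likewise forced: they are $0$ on $+$ edges, while on a maximal $-$ run they are initialized to $1$ at the opening $c_2$ and incremented by $1$ at each intermediate $a_2$ or $b_2$ vertex, with the consistency requirement that they return to $0 \pmod n$ at the closing $c_1$. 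Hence any admissible state is unique.

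For the necessity of the interleaving, the key observation is that the forbidden triple $(+,+,-)$ cannot occur in an admissible state, so while the horizontal spin is $+$ any column with bottom$-$ must also carry top$-$, making it a $b_1$ vertex. Any bottom-only column, i.e., one in $\{j_k\}\setminus\{i_k\}$, can therefore occur only inside a $-$ run, which must have been switched on earlier by a $c_2$ at a top-only column in $\{i_k\}\setminus\{j_k\}$. Symmetrically, $(-,-,+)$ being forbidden rules out top-only columns while horizontal is $-$, so the $-$ run must be closed at a $c_1$ before another $c_2$ can open. Reading columns from the high-index end downward, the top-only and bottom-only columns therefore alternate, beginning with top-only. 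Combining this alternation with the shared columns where $i_k = j_k$ (which are $b_1$ vertices contributing to both $\{i_k\}$ and $\{j_k\}$), the resulting pattern is precisely $i_m \geq j_m \geq i_{m-1} \geq j_{m-1} \geq \cdots$.

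The Gamma case is identical in structure, except that the far-left horizontal spin is now $-$, per the convention that all but finitely many horizontal edges are $-^{0}$. The roles of the two forbidden triples swap, so reading left to right the first unbalanced column is now bottom-only, and the interleaving becomes $j_m \geq i_m \geq j_{m-1} \geq i_{m-1} \geq \cdots$. The main bookkeeping subtlety in both cases will be handling the shared columns $i_k = j_k$ correctly; once they are recognized as $b_1$ vertices that do not change the horizontal state, the translation between the alternation pattern and the stated inequality is purely combinatorial.
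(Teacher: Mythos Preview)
Your argument is correct and takes essentially the same approach as the paper's: both hinge on the observation that the local pattern (left $+$, top $+$, bottom $-$) is forbidden for Delta ice, and use this to force the interleaving by sweeping along the row; the paper phrases this via the ice rule (even number of $-$ spins at each vertex), shows $i_m \geq j_m$ from one instance of the forbidden pattern, and then writes ``continuing this way,'' citing earlier literature, whereas you spell out the propagation and the alternation of top-only and bottom-only columns more explicitly.

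One small correction that does not affect your argument: shared columns (top $-$, bottom $-$) are not always of type $\texttt{b}_1$; when the incoming horizontal spin is $-$ the vertex is $\texttt{a}_2$ instead. Since both $\texttt{b}_1$ and $\texttt{a}_2$ preserve the horizontal spin, the alternation of top-only and bottom-only columns is unaffected and your conclusion stands.
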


\begin{proof}
  Let us consider the case of Delta ice.  
  To see that the state (if it exists) is unique, observe that
  every vertex must have an even number of $-$ signs on its
  adjoining edges. 
  We have required all but finitely many horizontal edges to have
  configuration $+^0$. Suppose that $j_m>i_m$. In that case,
  this observation shows that the spin to the left of the
  $j_m$ column is~$+^0$; so at the vertex in the $j_m$
  column the configuration would be
  \[\scalebox{0.75}{\gammaice{+}{+}{-}{-}{}{}}\]
  which is an illegal pattern. Thus $i_m\geqslant j_m$,
  and continuing this way gives (\ref{interleaving}).
  The case of Gamma ice is similar.
  Compare~\cite{wmd5book} Proposition~19.1 or~\cite{Baxter} Section~8.2.
\end{proof}

The {\textit{Boltzmann
weight of the state}} is the product of the Boltzmann weights at the
vertices. The {\textit{partition function}} $Z(\mathfrak{S})$ is the sum of
the Boltzmann weights over all states.  These definitions make sense by the
following result.

\begin{proposition}
  In the case where the grid is infinite, there are only a
  finite number of states for $\mathfrak{S}^\Delta_{\mathbf{z},\xi,\eta,r}$ or
  $\mathfrak{S}^\Gamma_{\mathbf{z},\xi,\eta,r}$. For each state, all but
  finitely many vertices have Boltzmann weight~$1$,
  so the Boltzmann weight of the state is a finite
  product.
\end{proposition}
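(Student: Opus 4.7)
The plan is to show that any admissible state is uniquely determined outside a finite interval of columns $[A,B]$, where it takes a ``vacuum'' configuration of Boltzmann weight $1$ at every vertex. Both claims of the proposition follow: finitely many configurations fit in the finite window, and only vertices inside the window can have weight different from $1$.

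First I would localize the non-trivial data of a given state. By the very definition of an admissible state in the infinite setting, only finitely many horizontal edges carry non-trivial decorated spins (different from $+^0$ for Delta, or from $-^0$ for Gamma). Since the boundary sequences satisfy $i_k = j_k = k$ for $k \ll 0$, the top and bottom boundary spins also depart from a uniform pattern in only finitely many columns. Taking $[A,B]$ large enough to contain the support of all non-trivial horizontal edges and of all non-uniform boundary spins, outside $[A,B]$ both the horizontal edges and the top/bottom boundary spins are at their vacuum values.

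Next I would argue column-by-column that for each column $c \notin [A,B]$ the vertex configuration is uniquely forced and has weight~$1$. This is a finite case analysis on Table~\ref{tab:mweights}: given trivial horizontal spins on both sides and vacuum vertical spins on top and bottom, only the fully trivial vertex type (a$_1$ for Delta, a$_2$ for Gamma, each of weight $1$) is admissible. Competing candidates are ruled out because they either carry a non-trivial horizontal spin or have an incompatible vertical spin; the configuration b$_2$ is further excluded for $n \geq 2$ by the charge constraint, which would require the labelled charges $a$ and $a+1$ to both satisfy $a \equiv 0 \pmod n$. A short induction on the rows $k = 1, \dots, r$ then forces every interior vertical edge in the column to its vacuum value.

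Finally, inside the finite window $[A,B]$ there are only finitely many edges, each with finitely many admissible decorated-spin choices, so only finitely many interior configurations are possible, and each extends uniquely to the vacuum outside. This gives finitely many admissible states, and for each state only vertices inside $[A,B]$ can contribute a Boltzmann weight different from $1$, so the Boltzmann weight of the state is a finite product. The main delicacy I anticipate is correctly identifying the ``vacuum'' vertex type at each tail of the column range and checking that the row-induction matches the window data to the exterior configuration at the two transition columns $c = A-1$ and $c = B+1$, keeping careful track of the Delta/Gamma charge asymmetry.
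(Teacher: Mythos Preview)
Your argument correctly handles the ``finite product'' claim: once you fix a state and take $[A,B]$ to contain all of that state's non-trivial horizontal edges together with the non-uniform boundary columns, the row-by-row analysis forces every vertex outside $[A,B]$ to a weight-$1$ configuration. However, the argument for ``finitely many states'' is circular. Because you choose $[A,B]$ \emph{per state} to contain that state's non-trivial horizontal support, different states could in principle require arbitrarily large windows, and counting configurations inside a state-dependent $[A,B]$ does not bound the total number of states. What is missing is a proof that the non-trivial horizontal edges of \emph{every} admissible state lie in a fixed window $[A_0,B_0]$ depending only on $\xi,\eta$. One can supply this by a propagation argument from the tails inward: for Delta ice at columns $c>\max(i_m,j_m)$ the boundary verticals are both $+$, and starting from the far left (where the horizontal edges are $+^0$ by hypothesis) the vertex in each row is forced to type $\tt{a}_1$, so $+^0$ propagates rightward until the first non-uniform boundary column; a symmetric argument works on the right tail. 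Your second paragraph gestures at this but cannot carry it out, because you have already \emph{assumed} the horizontals outside $[A,B]$ are trivial rather than proving it. The paper bypasses this entirely by invoking Lemma~\ref{interleavinglemma}: for a single row the state is unique, and the interleaving condition then bounds the intermediate row configurations for general~$r$.

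A smaller point: your identification of the vacuum vertex type is incomplete. For Delta ice the far-left vacuum is $\tt{a}_1$, but on the far right (columns $c\ll 0$, where the top and bottom boundary spins are both $-$) the vacuum type is $\tt{b}_1$; for Gamma the two tails give $\tt{b}_2$ and $\tt{a}_2$. The paper records both pairs. Your charge-constraint remark excluding $\tt{b}_2$ is also off: for Delta with trivial horizontals $+^0$, the pattern $\tt{b}_2$ is already excluded because its horizontal edges carry $-$ spins, so no charge argument is needed.
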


\begin{proof}
  The fact that there are only finitely many states is a consequence of
  Lemma~\ref{interleavinglemma}.  With our assumption that all but finitely
  many horizontal edges have decorated spin $+^0$ for Delta ice or $-^0$ for
  Gamma ice, it is not hard to see that for any state $\mathfrak{s}$ all but
  finitely many vertices are in configuration $\tt{a}_1$ or $\tt{b}_1$ for
  Delta ice, or $\tt{a}_2$ or $\tt{b}_2$ for Gamma ice. Since those vertices
  have Boltzmann weight~$1$, the Boltzmann weight of a state is a finite product.
\end{proof}

We will sometimes use the Dirac notation $\xi=|\xi\rangle$
for elements of $\mathfrak{F}$. 
Let us define an inner product on $\mathfrak{F}$ in which the normal-ordered
monomials
\[ \xi = u_{i_m} \wedge u_{i_{m - 1}} \wedge \cdots, \hspace{2em} i_m > i_{m -
   1} > \cdots \]
is an orthonormal basis. There is a unique involution on $\mathfrak{F}$ which is
conjugate-antilinear and which is the identity on the real vector space
spanned by the normal-ordered monomials. If $\xi = | \xi \rangle$ is an
element of $\mathfrak{F}$ we will denote by $\langle \xi |$ its image under
the involution. Then $\langle\eta|\xi\rangle$ will denote the inner
product of $\xi$ and $\eta$. This inner product is linear in $\xi$
and conjugate-linear in $\eta$.

Now let us specialize to Delta ice.
We may define an operator $T_\Delta(\mathbf{z})$ on $\mathfrak{F}_m$ by
\begin{equation}
  \label{transfermat}
 T_\Delta(\mathbf{z})\,\xi=
 T_\Delta(\mathbf{z})\,|\xi\rangle=\sum_\eta Z(\mathfrak{S}^\Delta_{\mathbf{z},\xi,\eta,r})|\eta\rangle.
\end{equation}
It is a consequence of Lemma~\ref{interleavinglemma} that
there are only finitely many terms in the right-hand side. (This
would fail for $\mathfrak{S}^\Gamma_{\mathbf{z},\xi,\eta,r}$.)

In the same notation we may write
\begin{equation}
  \label{ztransip}
  Z(\mathfrak{S}^\Delta_{\mathbf{z},\xi,\eta,r})=\langle\eta|T_\Delta(\mathbf{z})|\xi\rangle.
\end{equation}
In the special case where $r=1$, we will use the notation $T_\Delta(z)$
with $\mathbf{z}=(z)$. We call the operator the \textit{row transfer matrix}.
We have $T_\Delta(\mathbf{z})=T_\Delta(z_1)\cdots T_\Delta(z_r)$.

\begin{remark}
  \label{rmk:Gamma-finite}
  In (\ref{ztransip}) we have specialized to the case of Delta ice.
  For $\mathfrak{S}^\Gamma_{\mathbf{z},\xi,\eta,r}$ the sum (\ref{transfermat})
  would fail to be finite. Nevertheless we could similarly define $T_\Gamma(\mathbf{z})$
  for Gamma ice as an operator on ``bras'' $\langle\eta|$ instead of ``kets''
  $|\xi\rangle$ by the formula
  \[\langle\eta|T_\Gamma(\mathbf{z})=\sum_{\xi}Z(\mathfrak{S}^\Gamma_{\mathbf{z},\xi,\eta,r})\langle\xi|,\]
  which is a finite sum. Then (\ref{ztransip}) would still be correct.
\end{remark}

We specialize now to the case $r=1$ and denote $z=z_1$. As in \eqref{eq:intro-Hpm}, we define
operators $H_+(z)$ and $H_-(z)$ on $\F_m$ by 
\begin{equation}
  \label{hpmdef}
  H_{\pm} (z) := \sum_{k = 1}^{\infty} \frac{1}{k}  (1 - v^k) z^{\pm nk}
  J_{\pm k}
\end{equation}
If $\xi\in\mathfrak{F}$ then $H_+(z)\xi=H_+(z)|\xi\rangle$ is a finite
sum. For $H_-(z)$, this fails, but as with $T_\Gamma(z)$, 
we may interpret $H_-(z)$ as an operator by the formula
\[\langle\eta| H_-(z)=\sum_\xi \langle\eta|H_-(z)|\xi\rangle\langle\xi|,\]
and this is a finite sum.

Our main theorem (Theorem~\ref{thm:eH-equals-T}), states that
\begin{equation}
    e^{H_+(z)} = T_\Delta(z),\qquad e^{H_-(z)} = T_\Gamma(z)\;.
\end{equation}

We will prove this in the next section. As an immediate consequence,
the row transfer matrices $T_\Delta(z)$ and $T_\Gamma(z)$ are $\Uq$-module
homomorphisms, because the operators $J_k$ are.

\section{\label{sec:proofmain}Proof of the Main Theorem}

The proof is structured as follows. We will first prove the statement for Delta ice using induction to reduce the proof to an identity for two finite subsystems where we get a finite number of cases that are checked in Tables~\ref{tab:caseisubcases} and~\ref{tab:caseiisubcases}. One reason for starting with Delta ice is because of Remark~\ref{rmk:Gamma-finite} together with normal-ordering issues. The transfer matrix for Gamma ice is then related to the adjoint of the Delta ice transfer matrix in Subsection~\ref{sec:gammaproof}.
Therefore, until Subsection~\ref{sec:gammaproof} we will consider
Delta ice. We will fix $z$, and let $T=T(z)$ be the transfer matrix
(\ref{transfermat}) of the one-rowed system, and $H=H_+(z)$.

We pause to refine the criterion in Lemma~\ref{interleavinglemma}
for an admissible state to exist in the one-row system
$\mathfrak{S}_{\mathbf{z},\xi,\eta,1}$. For even if (\ref{interleaving}) is satisfied,
there may not be an admissible state $\mathfrak{s}$. Let us describe a further
condition that must be satisfied.

We may write $\xi = u_{i_m} \wedge u_{i_{m - 1}} \wedge \cdots$ and $\eta =
u_{j_m} \wedge u_{j_{m - 1}} \wedge \cdots$. By (\ref{interleaving})
$i_m \geqslant j_m \geqslant i_{m - 1} \geqslant \cdots$, and if $r$ is sufficiently negative, then $i_r =
r$ and $j_r = r$. The substance of the lemma that we will now state is that
there is a bijection between the two sequences $\mathbf{i} = (i_m, i_{m - 1},
\cdots)$ and $\mathbf{j} = (j_m, j_{m - 1}, \cdots)$, and that corresponding
elements are congruent modulo $n$.

Since the elements of $\mathbf{j}$ are distinct, each $i_a$ can be equal to a
unique $j_b$, which must be either $j_a$ or $j_{a + 1}$. In this case we say
that $i_a$ and $j_b$ are \textit{paired}. It remains for the bijection to be
defined on those elements of $\mathbf{i}$ (resp.~$\mathbf{j}$) that are not
equal to any element of the other sequence. Thus we say that the index $i_a$
is \textit{isolated} for the pair $\xi, \eta$ if $j_{a + 1} > i_a > j_a$,
and similarly we say that the index $j_b$ is \textit{isolated} if $i_b > j_b
> i_{b - 1}$. The isolated indices $i_a$ and $j_b$ are \textit{paired} if
\begin{equation}
  \label{isolatedij}
   j_{a + 1} > i_a > j_a = i_{a - 1} > j_{a - 1} = i_{a - 2} > \cdots j_{b +
     1} = i_b > j_b > i_{b - 1} .
\end{equation}
(We omit the condition $j_{a + 1} > i_a$ if $a = m$.)
The condition (\ref{isolatedij}) means there are no isolated indices between
$i_a$ and $j_b$, though there may be many indices that are not isolated. If
$i_a$ is not isolated, then either $i_a = j_a$ or $i_a = j_{a + 1}$. In this
case, we consider $i_a$ to be paired with $j_a$ or~$j_{a + 1}$.

\begin{lemma}
  \label{isolatedinpairs}For any admissible state $\mathfrak{s}$, every
  isolated $i_a$ is paired with a unique isolated $j_b$. The pairing
  relationship is a bijection between the $i_a$ and the $j_b$, and if $i_a$
  and $j_b$ are paired, then $i_a \equiv j_b$ modulo $n$.
\end{lemma}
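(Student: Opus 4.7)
The plan is to exploit the fact that for Delta ice in a single-row system, the decorated spin on every horizontal edge of an admissible state is forced by the top and bottom boundary data. A direct inspection of Table~\ref{tab:mweights} shows that at each vertex the triple (top spin, bottom spin, left horizontal decorated spin) determines the right horizontal decorated spin uniquely: columns with $(t,b)\in\{(+,+),(-,-)\}$ preserve the horizontal spin, sending $+^{0}\mapsto +^{0}$ (via $\tt{a}_1$ or $\tt{b}_1$) and $-^{a}\mapsto -^{a+1}$ (via $\tt{b}_2$ or $\tt{a}_2$); columns with $(t,b)=(-,+)$ admit only $+^{0}\mapsto -^{1}$ (via $\tt{c}_2$); and columns with $(t,b)=(+,-)$ admit only $-^{0}\mapsto +^{0}$ (via $\tt{c}_1$). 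In particular, a $(-,+)$ vertex is inadmissible when the left edge has spin $-$, and a $(+,-)$ vertex is inadmissible when the left edge has spin $+$ or carries a non-zero charge mod $n$.

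Next I translate the column types back to the sequences $\mathbf{i}$ and $\mathbf{j}$: a column is of type $(-,+)$ iff it is an isolated $i_a$, of type $(+,-)$ iff it is an isolated $j_b$, of type $(-,-)$ iff $i_a=j_a$ or $i_a=j_{a+1}$ (a non-isolated pair), and of type $(+,+)$ otherwise. Because the horizontal edge equals $+^{0}$ outside a finite window (by the boundary convention for Delta ice) and only isolated $i$'s and isolated $j$'s flip the horizontal spin, a left-to-right scan (in decreasing column order) forces the isolated $i$'s and isolated $j$'s to strictly alternate, beginning with an isolated $i$ and ending with an isolated $j$. This yields the bijection of the lemma: each isolated $i_a$ is matched with the next isolated index to its right, necessarily an isolated $j_b$, and every column strictly between them is a non-isolated pair, giving the equality chain $j_a=i_{a-1},\,j_{a-1}=i_{a-2},\,\ldots,\,j_{b+1}=i_b$ appearing in (\ref{isolatedij}).

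The congruence $i_a\equiv j_b\pmod n$ drops out of a charge-accumulation count: the right edge of column $i_a$ carries charge $1$ by the $\tt{c}_2$ rule, and each of the $i_a-j_b-1$ intermediate $(+,+)$ or $(-,-)$ columns increments the propagating charge by exactly one, so the left edge of column $j_b$ carries charge $i_a-j_b$; admissibility of the $\tt{c}_1$ vertex at $j_b$ demands this be $\equiv 0\pmod n$. The only step that requires genuine care is the case-by-case verification that each $(t,b,l)$ triple admits a unique $r$, i.e.\ that the transitions really are deterministic; the remainder is elementary bookkeeping with no serious obstacle.
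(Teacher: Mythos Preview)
Your proof is correct and follows essentially the same strategy as the paper's: both arguments read off the conclusion from the unique admissible Delta-ice state, using that the horizontal spin must be $+^{0}$ at both ends of each excursion between an isolated $i_a$ and the next isolated $j_b$, which forces $i_a-j_b\equiv 0\pmod n$. Your version is more explicit, formalising the vertex rules as a deterministic left-to-right transition function and deriving the alternation of isolated $i$'s and $j$'s from that; the paper simply draws the relevant fragment of the state and observes the same thing. The one point you gloss over slightly is the deduction of the equality chain $j_a=i_{a-1},\ldots,j_{b+1}=i_b$ from ``no isolated indices strictly between $i_a$ and $j_b$''; this follows by a short induction (e.g.\ $i_a>j_a$ and $j_a$ non-isolated force $j_a=i_{a-1}$, etc.), so there is no gap, but it is worth spelling out.
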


\begin{proof}
  It is obvious that if $i_a$ (resp. $j_b$) is not isolated, then it is paired
  with a unique $j_b$ (resp.~$i_a$). Since these are equal, they are $\equiv
  0$ mod $n$. Therefore we have to consider the isolated vertices. Here we
  make use of the hypothesis $\langle \eta |T| \xi \rangle \neq 0$. Consider
  the state of the model, with the columns labeled:
  \[ \begin{array}{ccccccccccccccc}
       & i_a &  &  &  &  &  & j_a &  &  &  &  &  & j_b & \\
       & - &  & + &  & \cdots &  & - &  & + &  & \cdots &  & + & \\
       + &  & - &  & - &  & - &  & - &  & - &  & - &  & +\\
       & + &  & + &  & \cdots &  & - &  & + &  & \cdots &  & - & 
     \end{array} \]
  The charges at the two horizontal edges labeled $+$ must both be $\equiv 0$
  modulo $n$. This implies that $i_a \equiv j_b$ modulo $n$.
\end{proof}

Let $\psi^{\ast}_k : \mathfrak{F}_m \to \mathfrak{F}_{m + 1}$ denote the
creation operator defined by
\begin{equation}
  \psi^{\ast}_j  (u_{i_m} \wedge u_{i_{m-1}} \wedge u_{i_{m-2}} \wedge \cdots) = u_j
  \wedge u_{i_m} \wedge u_{i_{m-1}} \wedge u_{i_{m-2}} \wedge \cdots \hspace{0.17em}
\end{equation}
and introduce the generating function
\begin{equation}
  \psi^{\ast} (x) = \sum_{j \in \mathbb{Z}} \psi^{\ast}_j x^j
  \hspace{0.17em},
\end{equation}
as well as the operator $\rho^{\ast}_k (z) : \mathfrak{F}_m \to
\mathfrak{F}_{m + 1}$
\begin{equation}
  \rho^{\ast}_k (z) = \psi^{\ast}_k - z \psi^{\ast}_{k - n} \hspace{0.17em} .
\end{equation}

We will use the following consequences of the Baker-Campbell-Hausdorff formula.
If $A$ and $B$ are elements of a Lie algebra such that $[A, B]$ commutes with both
$A$ and $B$, then
\[ e^A e^B = e^{[A, B]} e^B e^A . \]
If $[A, B] = c B$ where $c$ is a constant, then
\begin{equation}
    e^A B e^{-A} = e^c B \, .
\end{equation}

\begin{proposition}
  \label{prop:rho-H}
  With $H=H_+(z) = \sum_{k = 1}^{\infty} \frac{1}{k}  (1 - v^k) z^{nk}
  J_{k}$ and $\psi^*(x)$ as defined above, we have that
  \begin{equation}
    \label{eq:BCH-Hamiltonian} e^H \psi^{\ast} (x) e^{- H} = \frac{1 - x^n
    vz^n}{1 - x^n z^n} \psi^{\ast} (x)
  \end{equation}
  or equivalently that
  \begin{equation}
    \label{eq:rho-H} e^H \rho^{\ast}_k (z^n) = \rho^{\ast}_k  (vz^n) e^H
    \hspace{0.17em} .
  \end{equation}
\end{proposition}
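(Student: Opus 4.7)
The plan is to reduce the identity to a commutation relation between $J_k$ and $\psi^*(x)$, then exponentiate using the second Baker--Campbell--Hausdorff formula quoted just above the statement.

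First, I would compute $[J_k,\psi^*_j]$. By the definition of $\psi^*_j$ and the derivation property~\eqref{eq:J-derivative},
\begin{equation*}
J_k\bigl(\psi^*_j(u_{\mathbf{i}})\bigr) = J_k(u_j \wedge u_{\mathbf{i}}) = u_{j-nk}\wedge u_{\mathbf{i}} + u_j \wedge J_k(u_{\mathbf{i}}) = \psi^*_{j-nk}(u_{\mathbf{i}}) + \psi^*_j\bigl(J_k(u_{\mathbf{i}})\bigr),
\end{equation*}
so $[J_k,\psi^*_j]=\psi^*_{j-nk}$. Summing against $x^j$ gives
\begin{equation*}
[J_k,\psi^*(x)] = \sum_{j\in\Z}\psi^*_{j-nk}x^j = x^{nk}\psi^*(x).
\end{equation*}
Therefore $\psi^*(x)$ is an $\mathrm{ad}(J_k)$-eigenvector, and
\begin{equation*}
[H,\psi^*(x)] = c(x,z)\,\psi^*(x), \qquad c(x,z):=\sum_{k=1}^\infty \tfrac{1}{k}(1-v^k)(xz)^{nk}.
\end{equation*}
Crucially $c(x,z)$ is a scalar, so $[H,\psi^*(x)]$ commutes with $H$, and the second BCH formula from the excerpt applies.

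Next I would evaluate $c(x,z)$ in closed form. Splitting the sum and using $\sum_{k\geq 1}\tfrac{t^k}{k}=-\log(1-t)$,
\begin{equation*}
c(x,z) = -\log\bigl(1-(xz)^n\bigr) + \log\bigl(1-v(xz)^n\bigr) = \log\frac{1-x^n v z^n}{1-x^n z^n}.
\end{equation*}
The second BCH identity then yields $e^H\psi^*(x)e^{-H} = e^{c(x,z)}\psi^*(x) = \tfrac{1-x^n v z^n}{1-x^n z^n}\psi^*(x)$, which is~\eqref{eq:BCH-Hamiltonian}.

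Finally, I would derive the $\rho^*_k$ version by clearing the denominator. Since $\sum_k \rho^*_k(z^n)\,x^k = (1-x^n z^n)\psi^*(x)$ by the definition of $\rho^*_k$, multiplying~\eqref{eq:BCH-Hamiltonian} by $(1-x^n z^n)$ gives
\begin{equation*}
e^H\!\left(\sum_k \rho^*_k(z^n)\,x^k\right)\!e^{-H} = (1-x^n v z^n)\psi^*(x) = \sum_k \rho^*_k(vz^n)\,x^k,
\end{equation*}
and equating coefficients of $x^k$ yields~\eqref{eq:rho-H}. The only real content is the commutation $[J_k,\psi^*_j]=\psi^*_{j-nk}$; everything else is bookkeeping with formal power series and BCH. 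I do not anticipate a genuine obstacle, though one should note that the derivation property~\eqref{eq:J-derivative} is the statement that makes this identity insensitive to Drinfeld twisting, since the twist alters only how wedges are normal-ordered, not the action of $J_k$ on the first factor.
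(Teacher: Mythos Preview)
Your proof is correct and follows essentially the same approach as the paper: compute $[J_k,\psi^*_j]=\psi^*_{j-nk}$ from the derivation property, sum to get $[H,\psi^*(x)]=\log\frac{1-x^nvz^n}{1-x^nz^n}\,\psi^*(x)$, apply the BCH formula, and then compare coefficients of $x^k$ after clearing the denominator. Your write-up is in fact slightly more explicit than the paper's in spelling out the generating-function identity $\sum_k\rho^*_k(z^n)x^k=(1-x^nz^n)\psi^*(x)$ used in the last step.
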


\begin{proof}
  For any $\xi \in \mathfrak{F}_m$ we have from \eqref{eq:J-derivative} that $[J_k, \psi^{\ast}_j]
  (\xi) = J_k (\psi^{\ast}_j (\xi)) - \psi^{\ast}_j (J_k (\xi)) = J_k 
  (u_j \wedge \xi) - u_j \wedge J_k (\xi) = J_k (u_j) \wedge \xi = u_{j - nk}
  \wedge \xi = \psi^{\ast}_{j - nk} (\xi)$ which implies
  \begin{equation}
    [J_k, \psi^{\ast}_j] = \psi^{\ast}_{j - nk} \hspace{0.17em} .
  \end{equation}
  Then,
  \[ [H, \psi^{\ast} (x)] = \sum_{k \geqslant 1} \sum_{j \in \mathbb{Z}}
     \frac{z^{nk} - v^k z^{n k}}{k} x^j \psi^{\ast}_{j - nk} (x) \]
  \[ = \sum_{k \geqslant 1} \frac{z^{nk} - v^k z^{nk}}{k} x^{nk} \psi^{\ast}
     (x) = \log \left( \frac{1 - x^n vz^n}{1 - x^n z^n} \right) \psi^{\ast}
     (x) \]
  from which we obtain (\ref{eq:BCH-Hamiltonian}) using the
  Baker-Campbell-Hausdorff formula. The equivalence of
  (\ref{eq:BCH-Hamiltonian}) and (\ref{eq:rho-H}) follows by comparing
  coefficients for different powers of~$x$.
\end{proof}

We will work now with finite-dimensional wedge spaces $\mathfrak{F}  (k, n -
k, r)$ spanned by vectors
\begin{equation}
  \label{xiaswedge} \xi = u_{i_1} \wedge \cdots \wedge u_{i_r}
\end{equation}
where $k \geqslant i_1 > \cdots > i_r \geqslant i_{k - n}$. Let $\mathfrak{F} 
(k, n - k) = \bigoplus_r \mathfrak{F}  (k, n - k, r)$. We will define
operators $\psi_k^{\ast}$ and $\psi_{k - n}^{\ast} : \mathfrak{F}  (k, n - k,
r) \longrightarrow \mathfrak{F}  (k, n - k, r + 1)$ by
\[ \psi_k^{\ast} (\xi) = u_k \wedge \xi, \qquad \psi_{k - n}^{\ast} (\xi) =
   u_{k - n} \wedge \xi . \]
These operators are analogous to the operators $\psi_k^{\ast}, \psi_{k -
n}^{\ast} : \mathfrak{F}_m \longrightarrow \mathfrak{F}_{m + 1}$ already
defined, and indeed if $\zeta = u_{j_{m - r}} \wedge u_{j_{m - r - 1}} \wedge
\cdots \in \mathfrak{F}_{m - r}$ is such that $k - n > j_{m - r} > j_{m - r -
1} > \cdots$ then $\xi \wedge \zeta$ is naturally in $\mathfrak{F}_m$ and
$\psi_k^{\ast}  (\xi \wedge \zeta) = \psi_k^{\ast} (\xi) \wedge \zeta$ and
similarly for $\psi_{k - n}^{\ast}$. We also define $\rho_k^{\ast} (z) = \psi_k^{\ast} - z \psi_{k - n}^{\ast}$ as
before.

Finally, we define an operator $\hat{T}$ on $\mathfrak{F}  (k, n - k)$. It is
enough to define constants $\langle \eta | \hat{T} | \xi \rangle$ where $\xi
\in \mathfrak{F}  (k, n - k, r)$ and $\eta \in \mathfrak{F}  (k, n - k, r')$.
Let us write $\varepsilon = \varepsilon (\xi) = (\varepsilon_k, \cdots,
\varepsilon_{k - n})$ where the spins $\varepsilon_i = \pm$ and $i = i_1,
\cdots, i_r$ in (\ref{xiaswedge}) are precisely the values where
$\varepsilon_i = -$. Similarly let $\delta = \delta (\eta) = (\delta_k,
\cdots, \delta_{k - n})$ be spins corresponding to~$\eta$. Let
\[ \eta = u_{j_1} \wedge u_{j_2} \wedge \cdots \wedge u_{j_{r'}} . \]
We require $i_1 \geqslant j_1 \geqslant i_2 \geqslant \cdots$ and for this reason
either $r' = r$ or $r' = r - 1$.

Now we define a finite system as follows. We make a grid with $n + 1$ columns
labeled $k, k - 1, \cdots, k - n$ in decreasing order.
\begin{equation}
  \label{shortice} \begin{array}{ccccccccc}
    & \varepsilon_k &  & \varepsilon_{k - 1} &  & \cdots &  & \varepsilon_{k
    - n} & \\
    +^0 &  &  &  &  &  &  &  & \pm^a\\
    & \delta_k &  & \delta_{k - 1} &  & \cdots &  & \delta_{k - n} & 
  \end{array} \quad .
\end{equation}
The boundary conditions at the left and right edge are as follows. At the left
boundary, we always put $+^0$. At the right boundary, there will, for each
row, be a unique decorated spin $\pm^a$ such that the partition function of
this system can have nonzero value. The sign $+$ or $-$ is determined by the
condition that the total number of $-$ spins around the whole boundary is even. Thus
it is $+$ if $r' = r$ and $-$ if $r' = r - 1$. The charge is also determined
by the requirement that there be a (uniquely determined) state $\mathfrak{s}$ with
the given boundary conditions. Then we define $\langle \eta | \hat{T} | \xi
\rangle$ to be the Boltzmann weight of this state, using the weights in
Table~\ref{tab:mweights}.

Now the operator $\hat{T} : \mathfrak{F}  (k, n - k, r) \longrightarrow
\mathfrak{F}  (k, n - k, r) \oplus \mathfrak{F}  (k, n - k, r - 1)$ is defined
by
\[ \hat{T} (\xi) = \sum_{\eta} \langle \eta | \hat{T} | \xi \rangle \eta . \]
\begin{proposition}
  \label{etxirhocompat}Let $\xi$ and $\eta$ be basis vectors of $\mathfrak{F} 
  (k, n - k)$ as above. Then
  \begin{equation}
    \label{etxiformula} \langle \eta | \hat{T} \rho_k^{\ast} (z^n) | \xi
    \rangle = \langle \eta | \rho_k^{\ast} (v z^{n - 1}) \hat{T} | \xi \rangle
    .
  \end{equation}
  Moreover, the spins $\pm a$ that appear on the left- and right-hand sides of this
  calculation are the same (with $a$ determined modulo $n$).
\end{proposition}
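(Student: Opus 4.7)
The plan is to reduce the identity to a local check at the two endpoint columns, $k$ and $k-n$, of the finite ice in (\ref{shortice}). First, expanding $\rho_k^{\ast}(w) = \psi_k^{\ast} - w\,\psi_{k-n}^{\ast}$ on both sides, the claim becomes
\begin{equation*}
\langle \eta | \hat{T}\psi_k^{\ast} | \xi \rangle - z^n \langle \eta | \hat{T}\psi_{k-n}^{\ast} | \xi \rangle = \langle \eta | \psi_k^{\ast}\hat{T} | \xi \rangle - vz^n \langle \eta | \psi_{k-n}^{\ast}\hat{T} | \xi \rangle.
\end{equation*}
Each of the four matrix elements is a partition function on the ice of (\ref{shortice}) with a single boundary spin adjusted: the two on the left correspond to flipping the top-row spin $\varepsilon_k$ or $\varepsilon_{k-n}$ from $+$ to $-$ (since $\psi_i^{\ast}$ prepends $u_i$ to $\xi$ and is nonzero only when $u_i \notin \xi$), while the two on the right correspond analogously to flipping the bottom-row spin $\delta_k$ or $\delta_{k-n}$, via the observation that $\langle \eta | \psi_i^{\ast} | \mu \rangle$ is nonzero only when $\eta$ begins with $u_i$.

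Next I would use Lemma~\ref{interleavinglemma} together with the quantum wedge relations to discard those combinations of $(\varepsilon_k, \varepsilon_{k-n}, \delta_k, \delta_{k-n})$ that make both sides vanish identically, leaving a finite list of surviving configurations. In each such configuration the admissibility condition (an even number of $-$ spins around every vertex) forces the flipped state to agree with the unflipped state away from the two boundary columns $k$ and $k-n$. Consequently, each matrix element reduces to the Boltzmann weight of the underlying unflipped state times an explicit ratio of vertex weights drawn from Table~\ref{tab:mweights} at the two affected boundary vertices.

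Finally, I would verify the identity configuration by configuration in the two tables \ref{tab:caseisubcases} and \ref{tab:caseiisubcases}, the natural split being whether the image $\hat{T}|\xi\rangle$ lies in $\mathfrak{F}(k,n-k,r)$ or $\mathfrak{F}(k,n-k,r-1)$ — equivalently, whether the right-boundary sign is $+$ or $-$. The main obstacle, and the source of the scalar $v$ that distinguishes the two sides, is the bookkeeping of horizontal-edge charges: flipping a boundary spin in column $k$ propagates a charge shift along its row all the way to the right boundary in column $k-n$, and the identity requires that the right-boundary charges $\pm^a$ match between LHS and RHS for every pairing $(\xi,\eta)$ — this charge equality is precisely the ``moreover'' clause and must be tracked alongside the numerical weights. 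I expect the factor $v = q^2$ to emerge from the ratio of a weight of type $c_1$ (containing the factor $1-v$) appearing in exactly one of the two flipped configurations against a weight of type $a_2$ or $b_2$ (containing the function $g$) in the other, so that each case in the two tables reduces to a single explicit local comparison.
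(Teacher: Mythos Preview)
Your overall framework---expand into four terms, then check the $16$ boundary configurations---is the paper's strategy, but two of your structural claims are wrong and would derail the computation.

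First, the case split governing Tables~\ref{tab:caseisubcases} and~\ref{tab:caseiisubcases} is \emph{not} whether the right-boundary sign is $+$ or $-$. It is Proposition~\ref{casesonetwo}: either $\varepsilon_i=\delta_i$ for all interior $k>i>k-n$ (Case~(i)), or there is a unique interior column $s$ with $\varepsilon_s=-$, $\delta_s=+$ (Case~(ii)). In Case~(ii) the factor $G$ splits into two pieces $G'$ and $G''$ indexed by the isolated column $s$, and an extra $v$ enters via $g(k-s)g(s-k)=v$; none of this is visible if you split on the right-boundary sign.

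Second, your statement that ``each matrix element reduces to the Boltzmann weight of the underlying unflipped state times an explicit ratio of vertex weights at the two affected boundary vertices'' is false. Flipping $\varepsilon_k$ or $\varepsilon_{k-n}$ changes the horizontal-edge \emph{spin} (not merely the charge) across the entire row, so every interior vertex switches type (e.g.\ $\texttt{a}_1\leftrightarrow\texttt{b}_2$, $\texttt{b}_1\leftrightarrow\texttt{a}_2$), picking up a factor of $z$ or $g(a)z$ at each. This is where the product $G=\prod_{\varepsilon_i=-}g(k-i)$ and the powers of $z^n$ actually come from---not from a ratio at two endpoints.

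Relatedly, you omit a step that accounts for a second appearance of $G$: the vector $\psi_{k-n}^{\ast}\xi = u_{k-n}\wedge u_{i_1}\wedge\cdots\wedge u_{i_r}$ is not normal-ordered, and moving $u_{k-n}$ past each $u_{i_j}$ via (\ref{eq:quantum-wedge}) contributes $\pm\prod_{\varepsilon_i=-,\,k>i>k-n}g(k-i)$ (equation~(\ref{intergauss}) in the paper). There are no correction terms here because $|k-n-i_j|<n$, but you must record this scalar. Once you have (\ref{intergauss}) and the correct case split, the $32$ subcases are routine; your guess about where $v$ comes from (a single $c_1$ versus $a_2/b_2$ ratio) is too local to be right.
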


We will prove this in Section~\ref{sec:proof}. The meaning of the second assertion is
as follows. Suppose we compute
\[ \langle \eta | \hat{T} \rho_k^{\ast} (z^n) | \xi \rangle . \]
This equals $\langle \eta | \hat{T} | \psi_k^{\ast} \xi \rangle - z^n  \langle
\eta | \hat{T} | \psi_{k - n}^{\ast} \xi \rangle$ and in this computation two
right edge spins $\pm a$ and $\pm b$ will appear. (See (\ref{shortice}).)
Similarly on the other side of the computation, two right edge spins $\pm c$
and $\pm d$ will appear. The assertion is that these four spins are equal in
sign, and $a \equiv b \equiv c \equiv d$ modulo~$n$.

\begin{proposition}
  \label{prop:rho-T}Let $\xi = u_{i_m} \wedge u_{i_{m - 1}} \wedge \cdots \in
  \mathfrak{F}_m$ with $i_m > i_{m - 1} > \ldots$ and let $k > i_m$. Then,
  \begin{equation}
    T \rho^{\ast}_k (z^n) | \xi \rangle = \rho^{\ast}_k  (v z^n) T| \xi
    \rangle \hspace{0.17em} .
  \end{equation}
\end{proposition}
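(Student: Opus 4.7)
The plan is to reduce the desired identity on the infinite grid to the finite-window identity of Proposition~\ref{etxirhocompat}, using the hypothesis $k>i_m$ to isolate the window of columns $[k-n,k]$ from the rest of the row. First, I decompose $\xi=\xi'\wedge\zeta$, where $\xi'$ collects the factors $u_{i_a}$ of $\xi$ with indices in $[k-n,k]$ (so $\xi'\in\mathfrak{F}(k,n-k,r)$ for some $r\geq 0$) and $\zeta$ collects the remaining factors, whose indices are all strictly less than $k-n$. Since $k>i_m$ forces every index of $\xi'$ to be at most $i_m<k$, and since $\rho_k^*(z^n)$ only adds factors at indices $k$ or $k-n$ (both in the window), associativity of the quantum wedge product yields
\[
\rho_k^*(z^n)\,\xi \;=\; \bigl(\rho_k^*(z^n)\,\xi'\bigr)\wedge\zeta,
\]
and likewise every $\eta$ appearing in the expansion of $T\xi$ factors as $\eta'\wedge\zeta'$ with $\eta'$ in the wedge space of the window.

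Next I would establish a factorization of the infinite-row transfer matrix across the window. The key observation is that for any admissible state contributing to $T\xi$ or to $T(\rho_k^*(z^n)\xi)$, every column with label strictly greater than $k$ has both top and bottom spins equal to $+$: on top this is because the maximal index on the top of the ice is at most $k$ (for $\rho_k^*\xi$ the added index is exactly $k$, never larger) and $k>i_m$; on the bottom it is forced by the interleaving of Lemma~\ref{interleavinglemma}. Consulting Table~\ref{tab:mweights}, every such column is frozen into configuration $\mathtt{a}_1$ with weight $1$, and the horizontal state $+^0$ propagates unchanged leftward until it reaches the window. Thus the horizontal state entering column $k$ from the left is always $+^0$, matching precisely the left-boundary convention used to define $\hat T$, and the partition function factors as
\[
\langle \eta'\wedge\zeta' \mid T \mid \xi'\wedge\zeta \rangle \;=\; \sum_{\sigma}\, \langle \eta' \mid \hat T \mid \xi' \rangle_\sigma \cdot Z_\sigma(\zeta,\zeta'),
\]
where $\sigma$ ranges over the horizontal states exiting the window at the right of column $k-n$ and $Z_\sigma(\zeta,\zeta')$ is the partition function of the columns with labels less than $k-n$, with state $\sigma$ entering from the left.

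Applying this factorization to both sides of the claim and invoking Proposition~\ref{etxirhocompat} to intertwine $\hat T$ with $\rho_k^*$ inside each $\sigma$-sector then yields a term-by-term match, and hence $T\rho_k^*(z^n)\xi = \rho_k^*(vz^n)T\xi$. The main obstacle is ensuring that the right-boundary spin $\sigma$ is the \emph{same} on the two sides of the window identity; otherwise different values of $\sigma$ would appear on the two sides and the $Z_\sigma(\zeta,\zeta')$ factors would not line up for cancellation. This is exactly the second assertion of Proposition~\ref{etxirhocompat}, that the right-edge spins appearing on both sides have the same sign and the same residue modulo~$n$, and it is what allows the factorization to cleanly reduce the infinite-row identity to the finite-window one.
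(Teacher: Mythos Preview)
Your approach is correct and essentially the same as the paper's. Both arguments fix a test vector $\eta$, cut the infinite row at column $k-n$, and reduce to the finite-window identity of Proposition~\ref{etxirhocompat}; the paper phrases the factorization as $\langle\eta|T\psi_k^*|\xi\rangle=\langle\eta_1|\hat T\psi_k^*|\xi_1\rangle\cdot C$ with a single constant $C$ (your $Z_\sigma$), and uses precisely the second assertion of Proposition~\ref{etxirhocompat}---just as you do---to ensure the right-edge spin, and hence $C$, is identical across the four terms.
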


\begin{proof}
  Let $\eta \in \mathfrak{F}_m$. We write
  $\eta = u_{j_m} \wedge u_{j_{m - 1}} \wedge \cdots$ with $j_m > j_{m - 1} > \ldots$.
  Unless $k \geqslant j_m$ it is easy
  to deduce that $\langle \eta |T \psi_k^{\ast} | \xi \rangle$,
  $\langle \eta |T \psi_{k - n}^{\ast} | \xi \rangle$,
  $\langle \eta | \psi_k^{\ast} T| \xi \rangle$ and
  $\langle \eta | \psi_{k - n}^{\ast} T| \xi \rangle$ are all
  zero from Lemma~\ref{interleavinglemma}, and from the fact that if $\xi'$
  does not involve any $u_m$ with $m > k$ then neither does $\psi_k^{\ast}
  \xi'$ or $\psi_{k - n}^{\ast} \xi'$. Therefore it is enough to prove that
  \[ \langle \eta |T \rho_k^{\ast} (z^n) | \xi \rangle = \langle \eta |
     \rho_k^{\ast} (v z^n) T| \xi \rangle \]
  under the assumption that $k \geqslant j_m$.
  
  Let us find $r$ such that $i_r \geqslant k-n > i_{r - 1}$ and write
  $\xi = \xi_1 \wedge \xi_2$ with
  \[ \xi_1 = u_{i_m} \wedge \cdots \wedge u_{i_r}, \qquad \xi_2 = u_{i_{r -
     1}} \wedge u_{i_{r - 2}} \wedge \cdots\;. \]
  Similarly we write $\eta = \eta_1 \wedge \eta_2$ where
  \[ \eta_1 = u_{j_m} \wedge \cdots \wedge u_{j_r'}, \qquad \eta_2 = u_{j_{r' -
     1}} \wedge u_{j_{r' - 2}} \wedge \cdots \]
  and $r'$ is such that $j_{r'}\geqslant k-n >j_{r'-1}$.
  
  Now let $\mathfrak{s}$ be the unique state associated with $\langle \eta |T|
  \psi_k^{\ast} \xi \rangle$. We will cut the partition function to the right
  of the $k - n$ column. Thus we partition the Boltzmann
  weights into those from columns numbered $\geqslant k - n$, and
  those from columns $< k - n$. Since $k\geqslant i_m,j_m$ the spin
  in the horizontal edge to the left of the $k$-th column must be $+^0$.
  Depending on $\xi$ and $\eta$, let $\pm^a$ be the
  decorated spin attached to the horizontal edge to the right of the
  $(k-n)$-th column.  We obtain
  \[ \langle \eta |T \psi_k^{\ast} | \xi \rangle = \langle \eta_1 | \hat{T}
     \psi_k^{\ast} | \xi_1 \rangle \cdot C \]
  where $C$ is the Boltzmann weight of the following state of an (infinite)
  truncated system:
  \[ \begin{array}{cccccc}
       & \varepsilon_{k - n - 1} &  & \varepsilon_{k - n - 2} &  & \cdots\\
       \pm^a &  &  &  &  & \\
       & \delta_{k - n - 1} &  & \delta_{k - n - 2} &  & \cdots
     \end{array} \]
  where $\varepsilon_i = -$ if $i$ is among the indices $i_{r - 1}, i_{r - 2},
  \cdots$ in $\xi_2$ and $\delta_i$ is similarly derived from $\eta_2$. 
  
  Now we similarly have
  \[ \langle \eta |T \psi_{k - n}^{\ast} | \xi \rangle = \langle \eta_1 |
     \hat{T} \psi_{k - n}^{\ast} | \xi_1 \rangle \cdot C, \qquad \langle \eta
     | \psi_k^{\ast} T| \xi \rangle = \langle \eta_1 | \psi_k^{\ast}  \hat{T}
     | \xi_1 \rangle \cdot C, \]
  and
  \[ \langle \eta | \psi_{k - n}^{\ast} T| \xi \rangle = \langle \eta_1 |
     \psi_{k - n}^{\ast}  \hat{T} | \xi_1 \rangle \cdot C, \]
  with the \textit{same} constant $C$ in every case. The fact that the
  constant $C$ is the same in every case follows from the last assertion in
  Proposition~\ref{etxirhocompat}. Hence we can pull out the constant and the
  identity needed follows from (\ref{etxiformula}).
\end{proof}

For an element $\xi = u_{i_m} \wedge u_{i_{m - 1}} \wedge \cdots \in
\mathfrak{F}_m$ with $i_m > i_{m - 1} > \cdots$ we define the degree $\deg
(\xi)$ of $\xi$ as follows
\begin{equation}
  \label{eq:degree} \deg (\xi) = \sum_{r \leqslant m} (i_r - r)
\end{equation}
which we note is positive since $i_r \geqslant r$ for all $r$, and finite
since $i_r = r$ for $r \ll 0$. If $\deg (\xi) = 0$, then $\xi$ is the vacuum
$| m \rangle$ in $\mathfrak{F}_m$.

Using the following lemma we can similarly define the degree of any $\xi =
u_{i_m} \wedge u_{i_{m - 1}} \wedge \cdots \in \mathfrak{F}_m$ even if it is
not normal-ordered.


\begin{lemma}
  \label{lem:degree}The degree defined above has the following properties:
  \begin{enumerate}
    \item \label{itm:degree-not-ordered} Suppose $\xi = u_{i_m} \wedge u_{i_{m
    - 1}} \wedge \cdots \in \mathfrak{F}_m$ is not normal-ordered, that is
    $i_r < i_{r - 1}$ for some $r \leq m$. Then writing $\xi$ in terms of the
    basis of $\mathfrak{F}_m$ of normal-ordered wedges, each term has the
    same degree, which equals $\sum_{r \leq m} (i_r - r)$.
    
    \item \label{itm:degree-extra} Let $\xi' = u_{i_{m - 1}} \wedge u_{i_{m -
    2}} \wedge \cdots \in \mathfrak{F}_{m - 1}$ with $i_{m - 1} > i_{m - 2} >
    \cdots$. For any $k$, let $\xi = u_k \wedge \xi' \in \mathfrak{F}_m$ which
    is not necessarily normal-ordered. If $\xi \neq 0$, then,
    \begin{equation}
      \deg (\xi) = (k - m) + \deg (\xi') \hspace{0.17em} .
    \end{equation}
  \end{enumerate}
\end{lemma}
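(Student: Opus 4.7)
The central observation is that each quantum wedge relation in~\eqref{eq:quantum-wedge} preserves the sum of indices at the affected pair of adjacent factors. In the antisymmetry case $u_l \wedge u_m = -u_m \wedge u_l$ both sides have index-sum $l+m$, and in the general case every term on the right-hand side, namely $u_m \wedge u_l$, $u_{m-i-sn} \wedge u_{l+i+sn}$, and $u_{m-sn} \wedge u_{l+sn}$ for $s \geq 0$, again has index-sum $l+m$. This is what makes the degree well-defined on a non-normal-ordered expression.

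For part~\eqref{itm:degree-not-ordered}, the plan is a standard straightening argument. Starting from $\xi$, pick any adjacent inversion $i_r < i_{r-1}$ and apply~\eqref{eq:quantum-wedge} at this pair; the result is a sum of wedges of the same shape as $\xi$, each of which has the same total index sum $\sum_{s \leq m} i_s$ since the rewrite is local to positions $r-1$ and $r$ only. Iterate. The procedure terminates (which is guaranteed by Proposition~\ref{proposition:generators}, since it produces a well-defined basis of normal-ordered wedges) and yields an expansion of $\xi$ in normal-ordered basis vectors, each of which has the same total index sum as $\xi$, hence the same degree $\sum_{s \leq m}(i_s - s)$.

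Part~\eqref{itm:degree-extra} then follows immediately by applying part~\eqref{itm:degree-not-ordered} to $\xi = u_k \wedge u_{i_{m-1}} \wedge u_{i_{m-2}} \wedge \cdots \in \mathfrak{F}_m$, whose index at position $m$ is $k$ and whose remaining indices at positions $s \leq m-1$ are the $i_s$. Part~\eqref{itm:degree-not-ordered} gives
\[\deg(\xi) = (k - m) + \sum_{s \leq m-1}(i_s - s) = (k - m) + \deg(\xi'),\]
the second equality being the definition of $\deg(\xi')$ since $\xi'$ is already normal-ordered in $\mathfrak{F}_{m-1}$. The only point requiring genuine attention is the termination of the straightening procedure, but this is exactly what the normal-ordered basis of Proposition~\ref{proposition:generators} provides; beyond that the lemma is a formal consequence of the index-sum invariance of the quantum wedge relations.
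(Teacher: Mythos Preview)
Your proof is correct and follows essentially the same approach as the paper: both rest on the observation that every term $u_a \wedge u_b$ appearing on the right-hand side of the quantum wedge relation~\eqref{eq:quantum-wedge} (equivalently~\eqref{eq:KMS45}) satisfies $a+b = l+m$, so repeated straightening preserves the total index sum, and part~\eqref{itm:degree-extra} follows from part~\eqref{itm:degree-not-ordered} by setting $i_m = k$. Your version is slightly more explicit about termination via Proposition~\ref{proposition:generators}, but otherwise the arguments coincide.
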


Note that, even for the quantum wedge, if $i_r = i_{r - 1}$ for some $r$, then
$\xi = 0$. However, because of the extra terms in (\ref{eq:quantum-wedge})
compared to the classical ($q = 1$) wedge, if $i_r = i_{r - 2}$ for example, then $\xi$ is
not necessarily zero.

\begin{proof}
  For the first statement we notice that in the
  right-hand side of the quantum wedge (\ref{eq:KMS45}) for $u_j
  \wedge u_i$ with $j < i$, each term is of the form $u_a \wedge u_b$ with $a +
  b = i + j$. Since $\xi$ can be normal-ordered by repeated use of
  (\ref{eq:KMS45}) this proves the first assertion.
  
  The second statement follows from the first by
  letting $i_m = k$:
  \[ \deg (\xi) = \sum_{r \leq m} (i_r - r) = (i_m - m) + \sum_{r \leq m - 1}
     (i_r - r) = (k - m) + \deg (\xi') \hspace{0.17em} . \qedhere \]
\end{proof}

\begin{proof}[Proof of Theorem \ref{thm:eH-equals-T} (Delta ice)]
  We will show, for an arbitrary $\xi = u_{i_m} \wedge u_{i_{m - 1}} \wedge
  \cdots \in \mathfrak{F}_m$ with $i_m > i_{m - 1} > \cdots$ that $e^H \xi = T
  \xi$ using induction over the degree of $\xi$.
  
  The base case, $\deg (\xi) = 0$, is when $| \xi \rangle$ is the vacuum $|m
  \rangle$, for which we have that $J_k |m \rangle = 0$. Thus $e^H |m \rangle
  = |m \rangle$. It is easy to check that $T|m \rangle = |m \rangle$ also, as
  required.
  
  From now on, assume that $\xi$ is not a vacuum, which means that $i_m > m$.
  Let $\xi' = u_{i_{m - 1}} \wedge u_{i_{m - 2}} \wedge \cdots \in
  \mathfrak{F}_{m - 1}$ and $\xi'' = u_{i_m - n} \wedge \xi' \in
  \mathfrak{F}_m$. Then $\xi = u_{i_m} \wedge \xi' = \rho^{\ast}_{i_m} (z^n)
  \xi' + z^n \xi''$. Note that $u_{i_m - n} \wedge \xi'$ is not necessarily
  normal-ordered or nonzero. Using Lemma~\ref{lem:degree} we have that $\deg
  (\xi') = \deg (\xi) - (i_m - m) < \deg (\xi)$ and, if $\xi'' \neq 0$, $\deg
  (\xi'') = \deg (\xi) - n < \deg (\xi)$.
  
  We assume, for $\eta \in \mathfrak{F}$ with $\deg (\eta) < \deg (\xi)$, that
  $e^H \eta = T \eta$ (which also holds for $\eta = \xi'' = 0$). Then, for the
  induction step we have that
  \[ T \xi = T \rho^{\ast}_{i_m} (z^n) \xi' + z^n T \xi'' = T
     \rho^{\ast}_{i_m} (z^n) \xi' + z^n e^H \xi'' \hspace{0.17em} . \]
  Using Proposition~\ref{prop:rho-T} together with the induction hypothesis,
  we have that
  \[ T \rho^{\ast}_{i_m} (z^n) \xi' = \rho^{\ast}_{i_m}  (v z^n) T \xi' =
     \rho^{\ast}_{i_m} (v z^n) e^H \xi' = e^H \rho^{\ast}_{i_m} (z^n) \xi'
     \hspace{0.17em}, \]
  where, in the last step we have also used (\ref{eq:rho-H}) of
  Proposition~\ref{prop:rho-H}. Thus,
  \[ T \xi = e^H  (\rho^{\ast}_{i_m} (z^n) \xi' + z^n \xi'') = e^H \xi
     \hspace{0.17em} . \]

  The statement for Gamma ice is proved in Subsection~\ref{sec:gammaproof}.
\end{proof}

\subsection{Proof of Proposition~\ref{etxirhocompat}}
\label{sec:proof}
Let $\xi = u_{i_1} \wedge \cdots \wedge u_{i_r}$ and $\eta = u_{j_1} \wedge
\cdots \wedge u_{j_{r'}}$ be elements of $\mathfrak{F}  (k, n - k)$ with $i_1
> i_2 > \cdots > i_r$ and $j_1 > \cdots > j_{r'}$. We must show
\begin{equation}
  \label{fourterms} \langle \eta | \hat{T} \psi_k^{\ast} | \xi \rangle - z^n 
  \langle \eta | \hat{T} \psi_{k - n}^{\ast} | \xi \rangle = \langle \eta |
  \psi_k^{\ast}  \hat{T} | \xi \rangle - v z^n  \langle \eta | \psi_{k -
  n}^{\ast}  \hat{T} | \xi \rangle .
\end{equation}
Let $\varepsilon_i$ and $\delta_i$ with $k \geqslant i \geqslant k - n$ be the
spins associated with $\xi$ and $\eta$, so that $\varepsilon_i = -$ if $i =
i_j$ for some $j$, and $\varepsilon_i = +$ otherwise, and similarly for
$\delta_i$.


\begin{proposition}
  \label{casesonetwo}Suppose that any one of the four terms in
  (\ref{fourterms}) is nonzero. Then either:
  \begin{enumerate}[label=(\roman*)]
    \item We have $\varepsilon_i = \delta_i$ for $k > i > k - n$; or
    
    \item There is a unique value $s$ with $k > s > k - n$ such that
    $\varepsilon_s = -$ and $\delta_s = +$, and $\varepsilon_i = \delta_i$ for
    $k > i > k - n$, $i \neq s$.
  \end{enumerate}
\end{proposition}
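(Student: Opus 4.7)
The plan is to translate each of the four matrix elements in (\ref{fourterms}) into a one-row ice-model Boltzmann weight with modified boundary spins, and then extract constraints on the interior spins from the local vertex rules in Table~\ref{tab:mweights}. First I would verify that the operators $\psi^{\ast}_k$ and $\psi^{\ast}_{k-n}$, applied to a normal-ordered $\xi \in \mathfrak{F}(k,n-k)$, either produce a scalar multiple of the wedge obtained by inserting a $-$ spin at the corresponding boundary column while leaving the interior spins intact, or else vanish. In the $\psi^{\ast}_{k-n}$ case this follows because moving $u_{k-n}$ past each $u_{i_j}$ with $i_j\in(k-n,k]$ via (\ref{eq:quantum-wedge}) produces only correction terms of the form $u_a\wedge u_b$ with $a<b$, which are discarded as non-normal-ordered; only the straight swap survives. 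Analogous statements hold for the adjoints acting on $\eta$. Thus each of the four terms reduces to a Boltzmann weight of $\hat{T}$ whose top or bottom boundary has been altered at column $k$ or $k-n$ while all interior spins $\varepsilon_i,\delta_i$ with $k>i>k-n$ are preserved.

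Next I would apply the local structure of admissible Delta-ice states. Along a single row the horizontal edges split into alternating maximal runs of $+^0$ and $-^a$ edges. Each negative run is opened by a $c_2$ vertex with right edge $-^1$; its charges increment at successive $a_2$ or $b_2$ vertices; and it is either closed by a $c_1$ vertex (which requires incoming charge $-^0$) or persists to the right boundary. Consequently the length in horizontal edges of any closed negative run is a positive multiple of $n$. Our row has $n+2$ edges with the leftmost fixed $+^0$, so at most one negative run fits, and if closed it has length exactly $n$ and must span the entire row from column $k$ to column $k-n$.

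The decisive geometric consequence is that no negative run can both open and close within the interior column range $\{k-1,\ldots,k-n+1\}$: any two interior columns are separated by at most $n-2$ horizontal edges, which is not a positive multiple of $n$, and a run with one endpoint interior and the other at a boundary column has length in $\{1,\ldots,n-1\}$. Hence no admissible state contains an interior $c_1$ vertex, and the alternation of transitions then forbids two interior $c_2$ vertices (they would require a $c_1$ between them, which by the preceding cannot be interior and cannot be placed at $k$ or $k-n$ in the required position either). Translating back to spins, the interior pairs $(\varepsilon_i,\delta_i)$ must therefore be either all equal, which is case~(i), or equal everywhere except at a single column where they take the value $(-,+)$, which is case~(ii). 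Since the operator modifications affect only the boundary columns, this conclusion applies uniformly to each of the four terms. The main obstacle will be carefully bookkeeping how each boundary modification interacts with the forced (or forbidden) transitions at columns $k$ and $k-n$, and verifying the argument in degenerate cases such as $n=1$ where the charge divisibility constraint is vacuous but the claim still holds because every interior pair is automatically equal.
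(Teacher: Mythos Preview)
Your argument is correct and is essentially the paper's proof recast in the language of vertex types and charge runs rather than in the language of isolated indices and the pairing of Lemma~\ref{isolatedinpairs}. The two presentations are equivalent: your ``closed negative run has length a multiple of $n$'' is exactly the content of the pairing lemma (an isolated $i_a$ opens a $c_2$ and the matching isolated $j_b$ closes with a $c_1$ at distance $\equiv 0 \pmod n$), and your exclusion of an interior $c_1$ corresponds to the paper's observation that the first isolated index encountered from the left must be of the $(\varepsilon_s,\delta_s)=(-,+)$ type. Your version is slightly more self-contained since it does not invoke the earlier lemma, while the paper's version makes the connection to the global Fock-space structure more visible; beyond that there is no substantive difference. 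One small wording issue: your sentence about $\psi^\ast_{k-n}$ says the correction terms ``have $a<b$ and are discarded'' --- more precisely, the would-be first correction term $u_{m-i}\wedge u_{l+i}$ degenerates to $u_l\wedge u_m$ itself when $m-l<n$, so the sum in (\ref{eq:quantum-wedge}) terminates immediately, which is what the paper means by ``no correction terms because $|a-b|\leqslant n$''.
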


\begin{proof}
  Note that applying $\psi_k^{\ast}$ or $\psi_{k - n}^{\ast}$ to $\xi$ cannot
  affect $\varepsilon_i$ with $k > i > k - n$. In particular, $\psi^{\ast}_{k
  - n} (\xi) = u_{k - n} \wedge u_{i_1} \wedge \cdots \wedge u_{i_r}$ is not
  normal-ordered. However when we use (\ref{eq:quantum-wedge}) to put it in
  normal order, we get
  \begin{equation}
    \label{intergauss} u_{k - n} \wedge u_{i_1} \wedge \cdots \wedge u_{i_r} =
    \pm \left( \prod_{\substack{\varepsilon_i = - \\ k > i > k - n}}
    g (k - i) \right) u_{i_1} \wedge \cdots \wedge u_{i_r}
    \wedge u_{k - n}
  \end{equation}
  where the sign is $+$ if $\varepsilon_k = +$ and $-$ if $\varepsilon_k = -$.
  For this, there are no correction terms because the interchanged vectors are
  of the form $u_a \wedge u_b$ with $|a - b| \leqslant n$.
  
  Therefore each of the four terms in (\ref{fourterms}) is (possibly up to a
  constant such as the one in (\ref{intergauss})) of the form $\langle
  \eta' | \hat{T} | \xi' \rangle$ where $\xi'$ and $\eta'$ correspond to
  sequences $\varepsilon_i'$ and $\delta_i'$ of spins and (for the two terms
  on the left-hand side) $\delta_i' = \delta_i$ for all $k \geqslant i
  \geqslant k - n$ and also $\varepsilon_i' = \varepsilon_i$ except for one of
  the two cases $i = k$ or $i = k - n$. Similarly for the two terms on the
  right-hand side, $\varepsilon_i' = \varepsilon_i$ for all $k \geqslant i
  \geqslant k - n$ and $\delta_i' = \delta_i$ except when $i = k$ or $k - n$.
  Since $\varepsilon_i = \varepsilon_i'$ and $\delta_i = \delta_i'$ for $k > i
  > k - n$, we may replace $\varepsilon_i$ and $\delta_i$ by $\varepsilon_i'$
  and $\delta_i'$ in the statement of the proposition.
  
  Fixing one of these four cases, let $\xi' = u_{i_1'} \wedge u_{i_2'} \wedge \cdots$ and
  $\eta' = u_{j_1'} \wedge u_{j_2'} \wedge \cdots$. Under the assumption that
  $\langle \eta' | \hat{T} | \xi' \rangle \neq 0$, analogs of
  Lemmas~\ref{interleavinglemma} and~\ref{isolatedinpairs} are true. The
  analog of Lemma~\ref{interleavinglemma} means that $i_1' \geqslant j_1'
  \geqslant i_2' \geqslant j_2' \geqslant \cdots$.
  
  Moreover, the proof of Lemma~\ref{isolatedinpairs} will show that there is
  at most one isolated index in the interval $k > i > k - n$. We recall that
  an index $s$ is \textit{isolated} if $\varepsilon_s \neq \delta_s$. If $k
  > s > k - n$, this is clearly equivalent to $\varepsilon_s' \neq \delta_s'$.
  As in Lemma~\ref{isolatedinpairs} isolated indices come in pairs separated
  by a multiple of $n$. Thus if there are isolated indices, we must have $i_1'
  = j_1'$, $i_2' = j_2'$, up to the first isolated index, $i_m' > j_m'$. Then
  the next isolated index would have to be $\leqslant i_s' - n$, but this is
  outside of the considered interval. Let $s = i_m'$. Then $\varepsilon_s = \varepsilon_s' =
  -$, while $\delta_s = \delta_s' = +$, and there are no other isolated
  indices.
\end{proof}

So there are two types of cases we have to consider, depending on whether we are
in Case~(i) or Case~(ii) of Proposition~\ref{casesonetwo}. With each of these cases we have 16 subcases
depending on the values of $\varepsilon_k, \delta_k, \varepsilon_{k - n}$ and
$\delta_{k - n}$.

\begin{remark}
  It is possible to argue more efficiently and only check half these 32 cases,
  namely those in which $\varepsilon_k=+$. This is because in
  Proposition~\ref{prop:rho-T} we have $k > i_m$, and $i_m$ denotes the first minus sign
  of $\xi$. For completeness we included all 32 cases in
  Tables~\ref{tab:caseisubcases} and~\ref{tab:caseiisubcases}.
\end{remark}

For Case (i), let us denote
\[ G = \prod_{\substack{\varepsilon_i = - \\ k > i > k - n}} g (k - i) . \]
Case (i), subcase: $(\varepsilon_k, \delta_k, \varepsilon_{k - n}, \delta_{k -
n}) = (+, +, +, +$). We observe that $\langle \eta | \rho_k^{\ast}(v z^n) 
\hat{T} | \xi \rangle = 0$ since there is no way a component $\eta$ of $
\rho_k^{\ast}(v z^n)  \hat{T} | \xi \rangle$ can have both $\delta_k =
\delta_{k - n} = +$. So we must show that $\langle \eta | \hat{T}
\rho_k^{\ast}(z^n) | \xi \rangle = 0$. This has two terms, which will cancel.
First $\langle \eta | \hat{T} \psi_k^{\ast} | \xi \rangle$ is the Boltzmann
weight of the state
\[ \begin{array}{|l|}
     \hline
     \begin{array}{ccccccccccc}
       & - &  & \varepsilon_{k - 1} &  & \cdots &  & \varepsilon_{k - n + 1}
       &  & + & \\
       + &  & - &  & - &  & - &  & -^0 &  & -^1\\
       & + &  & \varepsilon_{k - 1} &  & \cdots &  & \varepsilon_{k - n + 1}
       &  & + & 
     \end{array}\\
     \hline
   \end{array} \]
that is, $Gz^n$, where the product is over $r$ patterns of type
$\tt{a}_2$ and $n - r$ of type $\tt{b}_2$. The second
term is $- z^n  \langle \eta | \hat{T} \psi_{k - n}^{\ast} | \xi \rangle$.
This equals $- z^n G$ times the Boltzmann weight of the state
\[ \begin{array}{|l|}
     \hline
     \begin{array}{ccccccccccc}
       & + &  & \varepsilon_{k - 1} &  & \cdots &  & \varepsilon_{k - n + 1}
       &  & - & \\
       + &  & + &  & + &  & + &  & +^0 &  & -^1\\
       & + &  & \varepsilon_{k - 1} &  & \cdots &  & \varepsilon_{k - n + 1}
       &  & + & 
     \end{array}\\
     \hline
   \end{array} \hspace{0.27em} . \]
Here the factor of $G$ comes from (\ref{intergauss}). The Boltzmann weight of
the last state is $1$, so the two terms cancel and the proposition is true in
this case.

To summarize, there are two ways that a factor of $G$ can appear. One is
through (\ref{intergauss}), and the other is through the Boltzmann weight of a
state. There are 16 subcases for Case~(i) and these are summarized in
Table~\ref{tab:caseisubcases}. It is easy to see that in all these cases the last
assertion of Proposition~\ref{etxirhocompat} (about the identity of the
decorated spins appearing at the right edges of the states contributing to the
nonzero terms in any subcase) is satisfied.

\begin{table} 
\[ \begin{array}{|c|cccc|}
     \hline
     (\varepsilon_k, \varepsilon_{k - n}, \delta_k, \delta_{k - n}) & \langle
     \eta |\hat{T} \psi_k^{\ast} | \xi \rangle & - z^n \langle \eta |\hat{T} \psi_{k -
     n}^{\ast} | \xi \rangle & \langle \eta | \psi_k^{\ast} \hat{T}| \xi \rangle & -
     v z^{n} \langle \eta | \psi_{k - n}^{\ast} \hat{T}| \xi \rangle\\
     \hline
     (+ + + +) & z^n G & - z^n G & 0 & 0\\
     (+ + + -) & G z^{n} (1-v)z & - z^n G & 0 & - v z^{n+1} G\\
     (+ + - +) & 1 & 0 & 1 & 0\\
     (+ - + -) & - G z^{n - 1} & 0 & 0 & - G z^{n - 1}\\
     (+ + - +) & 1 & 0 & 1 & 0\\
     (+ + - -) & 0 & 0 & 0 & 0\\
     (+ - - +) & 1 & 0 & 1 & 0\\
     (+ - - -) & 1 & 0 & 1 & 0\\
     (- + + +) & 0 & 0 & 0 & 0\\
     (- + + -) & 0 & - v z^{2 n} G^2 & 0 & - v z^{2 n} G^2\\
     (- - + +) & 0 & 0 & 0 & 0\\
     (- - + -) & 0 & 0 & 0 & 0\\
     (- + - +) & 0 & z^n G & z^n G & 0\\
     (- + - -) & 0 & z^n G & (1-v) z^{n} G & v z^{n} G\\
     (- - - +) & 0 & 0 & 0 & 0\\
     (- - - -) & 0 & 0 & - v z^{n} G & v z^{n} G\\
     \hline
   \end{array} \] 
   \caption{\label{tab:caseisubcases}Case (i) subcases, confirming (\ref{fourterms}).}
\end{table}

We now turn to Case~(ii). Let us again do one subcase completely, then
summarize all cases in a table. Let us consider the subcase where
$(\varepsilon_k, \varepsilon_{k - n}, \delta_k, \delta_{k - n}) = (+, +, +,
-$). We do not need to consider the contributions of $\psi_k^{\ast}$ to either
the left- or the right-hand side since these would involve an illegal pattern
in the $s$ column. On the other hand
\[ - z^n \langle \eta |\hat{T} \psi^{\ast}_{k - n} | \xi \rangle = (- z^n) \left[
   \prod_{\substack{k - n + 1 \leqslant i \leqslant k - 1\\
     \varepsilon_i = - 1}}
    g (k - i) \right] Z \]
where $Z$ is the Boltzmann weight of the state
\[ \begin{array}{|l|}
     \hline
     \begin{array}{ccccccccccccc}
       & + &  & \varepsilon_{k - 1} &  & \cdots &  & - &  & \cdots &  & - &
       \\
       + &  & + &  &  &  & + & (s) & - &  & - &  & -^{s + 1 - k + n}\\
       & + &  & \varepsilon_{k - 1} &  & \cdots &  & + &  & \cdots &  & - & 
     \end{array}\\
     \hline
   \end{array} \hspace{0.27em} . \]
The product in brackets comes from (\ref{intergauss}). We have
\[ Z = \left[ \prod_{\substack{
     k - n + 1 \leqslant i \leqslant s - 1\\
     \varepsilon_i = - 1
   }} g (s - i) \right] z^{s - k + n} g (s - k) . \]

We may combine two factors using the identity $g (k - s) g (s - k) = v$
and so
\[ - z^n \langle \eta |\hat{T} \psi^{\ast}_{k - n} | \xi \rangle = - \left[
   \prod_{\substack{
     k - n + 1 \leqslant i \leqslant k - 1\\
     \varepsilon_i = - 1\\
     i \neq s
     }} g (k - i) \right] \left[ \prod_{\substack{
     k - n + 1 \leqslant i \leqslant s - 1\\
     \varepsilon_i = - 1
   }} g (s - i) \right] v z^{s - k + 2 n}. \]
On the other side of the equation,
\[ - vz^{n} \langle \eta | \psi^{\ast}_{k - n} \hat{T}| \xi \rangle = (- v z^n
    ) \left[ \prod_{\substack{
     k - n + 1 \leqslant i \leqslant k - 1\\
     \varepsilon_i = - 1\\
     i \neq s
     }} g (k - i) \right] Z' \]
where $Z'$ is the Boltzmann weight of the state
\[ \begin{array}{|l|}
     \hline
     \begin{array}{ccccccccccccc}
       & + &  & \varepsilon_{k - 1} &  & \cdots &  & - &  & \cdots &  & + &
       \\
       + &  & + &  &  &  & + & (s) & - &  & - &  & -^{s + 1 - k + n}\\
       & + &  & \varepsilon_{k - 1} &  & \cdots &  & + &  & \cdots &  & + & 
     \end{array}\\
     \hline
   \end{array} \hspace{0.27em}. \]
That is,
\[ Z' = \left[ \prod_{\substack{
     k - n + 1 \leqslant i \leqslant s - 1\\
     \varepsilon_i = - 1
   }} g (s - i) \right] z^{s - k + n} . \]
We see that in this case:
\[ \langle \eta | \hat{T} \rho^{\ast}_k (z^n) | \xi \rangle = - z^n  \langle
   \eta | \hat{T} \psi^{\ast}_{k - n} | \xi \rangle = - v z^n \langle \eta |
   \psi^{\ast}_{k - n}  \hat{T} | \xi \rangle = \langle \eta | \rho^{\ast}_k
   (v z^n) \hat{T} | \xi \rangle. \]

Now let us define
\[G'=\prod_{\substack{
     k - n + 1 \leqslant i \leqslant k - 1\\
     \varepsilon_i = - 1\\
     i \neq s
     }} g (k - i),\qquad
  G''= \prod_{\substack{
     k - n + 1 \leqslant i \leqslant s - 1\\
     \varepsilon_i = - 1
   }} g (s - i)\;.
\]
We summarize the Case (ii) subcases in Table~\ref{tab:caseiisubcases}. As
in Case (i) it is easy to verify the last assertion of Proposition~\ref{etxirhocompat} regarding the decorated spins at the right edge, and the first assertion is verified in every subcase by Table~\ref{tab:caseiisubcases}. Thus Proposition~\ref{etxirhocompat} is now proved.

\begin{table}
\[ \begin{array}{|c|cccc|}
     \hline
     (\varepsilon_k, \varepsilon_{k - n}, \delta_k, \delta_{k - n}) & \langle
     \eta |\hat{T} \psi_k^{\ast} | \xi \rangle & - z^n \langle \eta |\hat{T} \psi_{k -
     n}^{\ast} | \xi \rangle & \langle \eta | \psi_k^{\ast} \hat{T}| \xi \rangle & -
     vz^n \langle \eta | \psi_{k - n}^{\ast} \hat{T}| \xi \rangle\\
     \hline
     (+ + + +) & 0 & 0 & 0 & 0\\
     (+ + + -) & 0 & \scriptstyle - G' G'' v z^{s - k + 2 n} & 0 &
     \scriptstyle - G' G'' v z^{s - k + 2 n }\\
     (+ + - +) & G'' z^{s - k + n} & 0 & G'' z^{s - k + n} & 0\\
     (+ - + -) & 0 & 0 & 0 & 0\\
     (+ + - +) & G'' z^{s - k + n} & 0 & G'' z^{s - k + n} & 0\\
     (+ + - -) & \scriptstyle G'' (1-v) z^{s - k + n}  & 0 & \scriptstyle G''
     (1-v) z^{s - k + n} & 0\\
     (+ - - +) & 0 & 0 & 0 & 0\\
     (+ - - -) & \scriptstyle G'' z^{s - k + n} g (s - k ) & 0 & \scriptstyle G'' z^{s - k + n} g (s - k) & 0\\
     (- + + +) & 0 & 0 & 0 & 0\\
     (- + + -) & 0 & 0 & 0 & 0\\
     (- - + +) & 0 & 0 & 0 & 0\\
     (- - + -) & 0 & 0 & 0 & 0\\
     (- + - +) & 0 & 0 & 0 & 0\\
     (- + - -) & 0 & \scriptstyle G' G'' v z^{s - k + 2 n}  & 0 & \scriptstyle
     G' G'' v z^{s - k + 2 n}\\
     (- - - +) & 0 & 0 & 0 & 0\\
     (- - - -) & 0 & 0 & 0 & 0\\
     \hline
   \end{array} \]
  \caption{\label{tab:caseiisubcases}Case (ii) subcases, confirming (\ref{fourterms}).}
\end{table}

\subsection{\label{sec:gammaproof}Gamma ice}
We will deduce the second identity in (\ref{ehtform}) for Gamma ice
from the first, which is  already proved. 
If $T$ is an operator on $\mathfrak{F}$ we define its adjoint $T^{\dagger}$ by
the formula
\[ \langle T^{\dagger} \eta | \xi \rangle = \langle \eta |T \xi \rangle . \]

In the following proof, we will assume that the parameter $v$ is real, and
moreover we will assume that the conjugate of $g (a)$ is $g (- a)$. In our applications
to Whittaker functions, $g$ is a Gauss sum, $| g (a) | = \sqrt{v}$, the
reciprocal of the square root of the residue cardinality. (See Remarks~1 and~2
in {\cite{BrubakerBuciumasBump}}.) Then $g (a)$ and $g (- a)$ are complex
conjugates since $g (a) g (- a) = v$.

Since our result is essentially an algebraic identity, if
we prove it under the restriction that $v$ is real and $g(a)$, $g(-a)$
are complex conjugates, it will follow in general.
Alternatively, we could take the $g(a)$ to be indeterminates in an 
algebra over $\mathbb{C}$, with an involution
that maps $g (a)$ to $g (- a)$.

\begin{proposition}
  The adjoint of $T_{\Delta} (z)$ is $T_{\Gamma} \left( 1 / \overline{z}
  \right)$.
\end{proposition}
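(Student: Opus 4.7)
The plan is to directly compare Boltzmann weights on both sides of the claimed operator identity. Expanding the adjoint, the statement is equivalent to
\[\overline{\langle \xi | T_\Delta(z) | \eta \rangle} = \langle \eta | T_\Gamma(1/\bar z) | \xi \rangle\]
for all normal-ordered monomial basis vectors: the left-hand side is the complex conjugate of the Boltzmann weight of the unique admissible Delta state (at parameter $z$) with $\eta$ on top and $\xi$ on the bottom, while the right-hand side is the Boltzmann weight of the unique admissible Gamma state (at parameter $1/\bar z$) with $\xi$ on top and $\eta$ on the bottom. By Lemma~\ref{interleavinglemma} and Lemma~\ref{isolatedinpairs} (and their straightforward Gamma analogs), the admissibility criteria -- interleaving of the two sequences together with the mod-$n$ congruences on paired isolated indices -- are symmetric in the two sequences and thus coincide for both systems; hence both states exist simultaneously, and in the degenerate case both sides vanish trivially.

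Next, I would decompose the column range of either state into \emph{frozen} columns, where the top and bottom vertical spins agree, and \emph{strands}, each being a maximal contiguous range $b_s \leqslant c \leqslant a_r$ bracketed by a pair of paired isolated indices with $a_r \equiv b_s \pmod n$. Interleaving rules out strand nesting or crossing, so the strands are pairwise disjoint. At frozen columns, horizontal propagation from the default boundary ($+^0$ for Delta, $-^0$ for Gamma) forces only weight-$1$ vertex types (\tt{a}$_1$, \tt{b}$_1$ for Delta; \tt{b}$_2$, \tt{a}$_2$ for Gamma), which contribute trivially on both sides. In particular, the $g$-factor-bearing vertex types \tt{a}$_2$ (Delta) and \tt{b}$_1$ (Gamma) never arise in these infinite states.

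The crux of the argument is the per-strand comparison. A Delta strand consists of \tt{c}$_2$ at column $a_r$ (weight $1$), $(a_r - b_s - 1)$ intermediate \tt{b}$_2$'s (each weight $z$), and \tt{c}$_1$ at column $b_s$ (weight $(1-v)z$), totaling $(1-v)z^{a_r - b_s}$. The Gamma strand on the same columns consists of \tt{c}$_1$ at $a_r$ (weight $1-v$), intermediate \tt{a}$_1$'s (each weight $z^{-1}$), and \tt{c}$_2$ at $b_s$ (weight $z^{-1}$), totaling $(1-v)z^{-(a_r - b_s)}$. Under $\bar v = v$, the conjugate of the Delta strand weight is $(1-v)\bar z^{a_r - b_s}$, which agrees with the Gamma strand weight evaluated at $1/\bar z$; multiplying over all strands and all frozen columns produces the required equality. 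The main obstacle is verifying the horizontal charge propagation claims rigorously: this rests on the pairing congruence $a_r - b_s \equiv 0 \pmod n$ from Lemma~\ref{isolatedinpairs}, which is exactly what guarantees that the $-$-charges incremented through the intermediate \tt{b}$_2$'s in a Delta strand return to $-^0$ at the terminal \tt{c}$_1$ vertex (and symmetrically for the $+$-charges in Gamma). Once this propagation is confirmed, the per-strand match is immediate from $\bar v = v$.
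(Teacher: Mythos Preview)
Your overall strategy --- compare the two one-row partition functions vertex by vertex --- is the same as the paper's. But there is a genuine gap in your strand analysis.

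The claim that ``the $g$-factor-bearing vertex types $\texttt{a}_2$ (Delta) and $\texttt{b}_1$ (Gamma) never arise'' is false. Look again at the structure of a paired isolated pair $(i_a, j_b)$ as in~(\ref{isolatedij}): when $a > b$, the intermediate columns $j_a = i_{a-1},\, j_{a-1} = i_{a-2},\, \ldots,\, j_{b+1} = i_b$ lie strictly between $i_a$ and $j_b$, and at each of them \emph{both} vertical spins are~$-$. Since the horizontal spin inside the strand is $-$ (for Delta), the vertex at such a column is $\texttt{a}_2$, not $\texttt{b}_2$, and carries a factor $g(i_a - c)\,z$. On the Gamma side the same column gives $\texttt{b}_1$ with a factor $g(c - i_a)\,z^{-1}$. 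So your per-strand weights are missing a product of Gauss-sum values, and your assertion that the intermediate vertices are all $\texttt{b}_2$ (resp.\ $\texttt{a}_1$) only holds in the degenerate case $a = b$.

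The fix is exactly the extra hypothesis the paper imposes just before the Proposition: assume $v$ is real \emph{and} $\overline{g(a)} = g(-a)$. Then the Delta factor $g(i_a - c)$ and the Gamma factor $g(c - i_a)$ at each such column are complex conjugates, and your per-strand comparison goes through once these factors are included. The paper packages this more cleanly: rather than decomposing into strands, it observes that swapping top and bottom boundaries together with the horizontal relabeling $+^0 \leftrightarrow -^0$, $-^a \leftrightarrow +^{-a}$ gives a bijection of states taking $\texttt{a}_1 \leftrightarrow \texttt{b}_2$, $\texttt{a}_2 \leftrightarrow \texttt{b}_1$, $\texttt{c}_1 \leftrightarrow \texttt{c}_2$, so the net effect on the Boltzmann weight is $z \mapsto z^{-1}$ and $g(a) \mapsto g(-a)$. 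That single observation replaces your strand bookkeeping and makes the role of the $g$-conjugation hypothesis transparent.
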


\begin{proof}
  We must check the identity
  \[ \langle T_{\Gamma} \left( 1 / \overline{z} \right) \eta | \xi \rangle =
     \langle \eta |T_{\Delta} (z) \xi \rangle . \]
  We will write this
  \[ \overline{\langle \xi |T_{\Gamma} \left( 1 / \overline{z} \right) \eta
     \rangle} = \langle \eta |T_{\Delta} (z) \xi \rangle . \]
  We may check this for normal-ordered $\xi, \eta \in \mathfrak{F}_m$. Let
  \[ \xi = u_{i_m} \wedge u_{i_{m - 1}} \wedge \cdots, \hspace{2em} \eta =
     u_{j_m} \wedge u_{j_{m - 1}} \wedge \cdots . \]
  Both sides vanish unless
  \[ i_m \geqslant j_m \geqslant i_{m - 1} \geqslant j_{m - 1} \geqslant
     \cdots, \]
  which we assume. Now $\langle \eta |T_{\Delta} (z) \xi \rangle$ is the
  partition function of a system with a unique state, with $-$ in the top
  (resp. bottom) vertical edges in the columns $i_m$ (resp.~$j_m$) and $+$
  elsewhere. Similarly $\langle \xi |T_{\Gamma} (z^{- 1}) \eta \rangle$ is the
  partition function of a system with the top and bottom vertical edges
  reversed. We may obtain its unique state by taking the state of the $\langle
  \eta |T_{\Delta} (z) \xi \rangle$ system, and replacing each horizontal
  decorated spin $+^0$ by $-^0$, or $-^a$ by $+^{- a}$. Now an examination of
  Table~\ref{tab:mweights} shows that this operation interchanges $\texttt{a}_1$
  patterns with $\texttt{b}_2$ patterns, and similarly $\texttt{a}_2
  \Leftrightarrow \texttt{b}_1$, $\texttt{c}_1 \Leftrightarrow \texttt{c}_2$.
  Remembering that the $\texttt{c}_1$ and $\texttt{c}_2$ patterns occur in pairs,
  we see that $\langle \xi |T_{\Gamma} (z^{- 1}) \eta_{} \rangle$ is obtained
  from $\langle \eta |T_{\Delta} (z) \xi \rangle$ by replacing $g (a)$ by $g
  (- a)$. If we further replace $z$ by its complex conjugate, we see that
  $\langle \xi |T_{\Gamma} \left( 1 / \overline{z} \right) \eta_{} \rangle$
  and $\langle \eta |T_{\Delta} (z) \xi \rangle$ are complex conjugates, as
  required.
\end{proof}

\begin{proposition}
  \label{jkadjoint}
  The adjoint of $J_k$ is $J_{- k}$ if $k \neq 0$.
\end{proposition}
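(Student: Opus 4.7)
My plan is to derive the adjoint identity for the $J_k$'s from the already-established adjoint identity for the transfer matrices, $T_\Delta(z)^\dagger = T_\Gamma(1/\bar z)$, together with Theorem~A. Working under the same reality assumptions on $v$ and $g$ used in the preceding proposition, I would first combine these two results as
\begin{equation*}
e^{H_+(z)^\dagger} = \bigl(e^{H_+(z)}\bigr)^\dagger = T_\Delta(z)^\dagger = T_\Gamma(1/\bar z) = e^{H_-(1/\bar z)},
\end{equation*}
where the first equality holds because the adjoint can be applied term-by-term in the defining power series of the exponential.

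Next I would pass from an equality of exponentials to an equality of exponents. Both $H_+(z)^\dagger$ and $H_-(1/\bar z)$ are power series in $\bar z$ with coefficients in $\End(\F)$ and no constant term (the lowest power is $\bar z^n$ on each side). For such series the formal exponential is a bijection onto $1 + \bar z\, \End(\F)[[\bar z]]$, with inverse the formal logarithm, so I may conclude
\begin{equation*}
H_+(z)^\dagger = H_-(1/\bar z).
\end{equation*}

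Finally, using the definition \eqref{hpmdef} and $\bar v = v$ to expand both sides in powers of $\bar z$, and matching the coefficient of $\bar z^{nk}$, yields $J_k^\dagger = J_{-k}$ for each $k \geq 1$ after cancelling the nonzero factor $(1-v^k)/k$; the case $k \leq -1$ then follows by applying $\dagger$ again, since $\dagger$ is involutive.

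The main technical point, I expect, will be justifying that $e^A = e^B$ in $1 + \bar z\, \End(\F)[[\bar z]]$ forces $A = B$. This reduces to the formal identity $\log \exp = \mathrm{id}$, which, even over the noncommutative ring $\End(\F)$, holds term-by-term as a universal identity in a single generator since only powers of one operator are ever computed. A secondary care point is to confirm that $(e^A)^\dagger = e^{A^\dagger}$ is meaningful as an operator identity on $\F$; this is unproblematic because, when evaluated on any vector of $\F_m$, both sides reduce to finite sums term-by-term in the defining series of the exponential, and the adjoint commutes with each such finite partial sum.
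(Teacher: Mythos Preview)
Your argument is circular within the paper's logical structure. The equality $T_\Gamma(1/\bar z) = e^{H_-(1/\bar z)}$ that you invoke in your first displayed chain is precisely the Gamma-ice half of Theorem~A, and in the paper that half is proved \emph{after} Proposition~\ref{jkadjoint}, using Proposition~\ref{jkadjoint} as the key input. Concretely, the proof of Theorem~A for Gamma ice reads: because $J_k$ and $J_{-k}$ are adjoints, $H_-(z) = H_+(1/\bar z)^\dagger$, hence $\exp(H_-(z)) = T_\Delta(1/\bar z)^\dagger = T_\Gamma(z)$. So you are assuming what is to be proved.

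At the point in the text where Proposition~\ref{jkadjoint} appears, only the Delta-ice identity $e^{H_+(z)} = T_\Delta(z)$ and the adjoint relation $T_\Delta(z)^\dagger = T_\Gamma(1/\bar z)$ are available. From these you can conclude $(e^{H_+(z)})^\dagger = T_\Gamma(1/\bar z)$, but you have no independent expression for $T_\Gamma$ in terms of the $J_{-k}$'s to compare with; obtaining one is exactly the content of the Gamma half of Theorem~A. The paper therefore does not attempt a self-contained proof here and instead cites the literature (\cite{LLTRibbon}, \cite{LamThesis}), where the adjointness of $J_k$ and $J_{-k}$ is established directly from the definition of the $J_k$ and of the inner product on~$\mathfrak{F}$, independently of any transfer-matrix considerations. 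If you want a direct argument, that is the route to take: work with the combinatorial action of $J_k$ on normal-ordered wedges and check the adjoint relation against the orthonormal basis, rather than going through Theorem~A.
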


\begin{proof}
  See~\cite{LLTRibbon}, remark after (21) on page 1055. This point
  is explained in more detail in~\cite{LamThesis}, Section~3.3
  (where the inner product is introduced) and Section~4.1.1,
  making use of results of both~\cite{KMS} and~\cite{LLTRibbon}
  in the context of a general Boson-Fermion correspondence.
\end{proof}

\begin{proof}[Proof of Theorem \ref{thm:eH-equals-T} (Gamma ice)]
  We will prove that $T_{\Gamma} (z) = e^{H_- (z)}$.
  Because $J_k$ and $J_{- k}$ are adjoints, by (\ref{hpmdef})
  \[ H_- (z) = H_+ \left( 1 / \overline{z} \right)^{\dagger} . \]
  Exponentiating then gives
  \[ \exp\bigl(H_- (z)\bigl) = \exp \bigl( H_+ \left( 1 / \overline{z} \bigr)
     \right)^{\dagger} = T_{\Delta} \bigl( 1 / \overline{z} \bigr)^{\dagger}
     = T_{\Gamma} (z) . \qedhere \]
\end{proof}

\bigbreak
\section{LLT and Metaplectic Symmetric Functions\label{LLTsection}}
\renewcommand\L{{L}}

The quantum Fock space of Kashiwara, Miwa and Stern, which underlies our
results, is also fundamental in the theory of LLT~\cite{LLTRibbon} or ribbon
symmetric functions. In this section, inspired by ideas from Lam~\cite{LamBoson}, we will show how the
LLT polynomials can be written in the form
\begin{equation}
  \label{voforllt}
  \mathcal{G}_{\lambda/\mu}^n(\mathbf{z})=\langle\mu|e^{\L_+(\mathbf{z})}|\lambda\rangle,
\end{equation}
where $\mathbf{z} = (z_1, \cdots, z_r)$ and
\begin{equation}
    \label{eq:H0}
    \L_+ (\mathbf{z}) = \sum_{k = 1}^{\infty} \frac{1}{k} p_k (\mathbf{z}) J_k.
\end{equation}
(We are using the notation (\ref{partitionnot}) for basis vectors of
$\mathfrak{F}_m$, and we may fix $m=0$ in this section.)

\begin{remark}
   \label{specialgremark}
   As we will prove, the polynomials (\ref{voforllt}) coincide with the
   LLT or ribbon symmetric polynomials provided we take
   \begin{equation}
   \label{specialgforllt}
    g(a)=\left\{\begin{array}{cc} -v&\text{if $a\equiv 0$ mod $n$;}\\
    -\sqrt{v}&\text{otherwise.}
     \end{array}\right.
   \end{equation}
   If $g$ is a more general function satisfying
   Assumption~\ref{assumptiong(a)}, then the results of this section
   will remain valid, but $\mathcal{G}_{\lambda/\mu}$ will
   be a \textit{generalization} of the LLT polynomials that
   are in the literature.
\end{remark}

The operator $\L_+(\mathbf{z})$ is similar to the operator $H_+$ defined in (\ref{hpmdef}),
that appears in our main theorem.
Indeed, in Definition 29 of~\cite{LamRibbon}, Lam defined a super generalization
$\mathcal{G}_{\lambda/\mu}^n(\mathbf{z}|\mathbf{w})$ of
the LLT polynomials, and we will prove that
\[\langle\mu|e^{H_+(\mathbf{z})}|\lambda\rangle=
\mathcal{G}_{\lambda/\mu}^n(\mathbf{z}^n|v\,\mathbf{z}^n),\]
where $H_+(\mathbf{z}) = \sum_{i=1}^r H_+(z_i)$.
For this statement we are omitting the Drinfeld twisting
which introduces the ``Gauss sums'' $g(a)$ into the
definition of the Fock space. If we include the Drinfeld
twisting, then we would obtain a generalization of the
LLT polynomials, and a similar statement would be true.

\begin{remark}
We do not know a statement generalizing
our Theorem~\ref{thm:eH-equals-T} that would express the
half-vertex operator $e^{\L_+}$ that appears in (\ref{voforllt})
as a row transfer matrix. This is available only in the special case
of the supersymmetric LLT polynomials with $\mathbf{w}=v\mathbf{z}$.
\end{remark}

Let $J_1, J_2, \cdots$ be independent commuting variables. (Eventually we will
specialize them to operators on $\mathfrak{F}_0$ as before, but for our first
result this is not needed.) 
Following the definitions in Section 3 of~\cite{LamBoson}, let
\[ u_k = \sum_{\lambda \vdash k} z_{\lambda}^{- 1} J_{\lambda}, \]
where if $\lambda = (1^{m_1} 2^{m_2} \cdots)$ is a partition of $k$ then
$z_{\lambda} = \prod_i (i^{m_i} m_i !)$ and $J_{\lambda} = J_{\lambda_1}
J_{\lambda_2} \cdots$. Also, if $\lambda$ is a partition define
$\varepsilon_{\lambda} = (- 1)^{| \lambda | - \ell (\lambda)}$ and define
\[ \widetilde{u}_k = \sum_{\lambda \vdash k} z_{\lambda}^{- 1}
   \varepsilon_{\lambda} J_{\lambda} . \]     
\begin{proposition}
  \label{operatorequality}We have
  \begin{equation}
    \label{ehoequ} e^{\L_+ (\mathbf{z})} = \sum_{\nu_1 = 0}^{\infty} \cdots
    \sum_{\nu_r = 0}^{\infty} z_1^{\nu_1} \cdots z_r^{\nu_r}  \hspace{0.17em}
    u_{\nu_r} \cdots u_{\nu_1}
  \end{equation}
  and
  \begin{equation}
    \label{ehoalt} e^{-\L_+(-\mathbf{z})} = \sum_{\nu_1 = 0}^{\infty}
    \cdots \sum_{\nu_r = 0}^{\infty} z_1^{\nu_1} \cdots z_r^{\nu_r} 
    \hspace{0.17em} \widetilde{u}_{\nu_r} \cdots \widetilde{u}_{\nu_1} .
  \end{equation}
\end{proposition}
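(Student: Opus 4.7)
The plan is to view both assertions as restatements of the classical symmetric-function generating function identities for complete homogeneous and elementary symmetric functions, applied to the commuting symbols $J_1, J_2, \ldots$. The key observation is that by (\ref{jkjmkcomm}) the operators $J_k$ with $k > 0$ all commute among themselves, so any identity among formal power series in commuting variables $p_k$ can be substituted verbatim by setting $p_k = J_k$.

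First I would establish the single-variable version. Recall the classical identity expressing the generating function for $h_k$ in the power-sum basis,
\[
\sum_{\nu \geq 0} h_\nu(\mathbf{x}) t^\nu = \exp\left(\sum_{k \geq 1} \frac{p_k(\mathbf{x})}{k} t^k\right), \qquad h_\nu = \sum_{\lambda \vdash \nu} z_\lambda^{-1} p_\lambda.
\]
Matching coefficient expansions, $u_\nu = \sum_{\lambda \vdash \nu} z_\lambda^{-1} J_\lambda$ is precisely the image of $h_\nu$ under $p_k \mapsto J_k$. Hence as formal power series in $z$,
\[
\sum_{\nu \geq 0} z^\nu u_\nu = \exp\!\left(\sum_{k \geq 1} \frac{z^k J_k}{k}\right) = e^{\L_+(z)}.
\]
Since $\L_+(\mathbf{z}) = \sum_{i=1}^r \L_+(z_i)$ and all the $J_k$ (for $k>0$) commute, the exponential factors as $e^{\L_+(\mathbf{z})} = \prod_{i=1}^r e^{\L_+(z_i)}$. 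Multiplying out the product of the single-variable identities gives (\ref{ehoequ}); the prescribed order of factors $u_{\nu_r} \cdots u_{\nu_1}$ is harmless because the $u_k$ commute (each is a polynomial in the $J_k$).

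For (\ref{ehoalt}) I would use the analogous identity for elementary symmetric functions,
\[
\sum_{\nu \geq 0} e_\nu(\mathbf{x}) t^\nu = \exp\!\left(\sum_{k \geq 1} \frac{(-1)^{k-1} p_k(\mathbf{x})}{k} t^k\right), \qquad e_\nu = \sum_{\lambda \vdash \nu} \varepsilon_\lambda z_\lambda^{-1} p_\lambda,
\]
which is the standard dual statement. Under $p_k \mapsto J_k$, the coefficient $e_\nu$ is transported to $\widetilde{u}_\nu$, and the exponent becomes
\[
\sum_{k \geq 1} \frac{(-1)^{k-1} z^k J_k}{k} = -\sum_{k \geq 1} \frac{(-z)^k J_k}{k} = -\L_+(-z).
\]
Thus $\sum_{\nu \geq 0} z^\nu \widetilde{u}_\nu = e^{-\L_+(-z)}$ for a single variable, and taking a product over $i=1,\ldots,r$ exactly as before yields (\ref{ehoalt}).

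The argument is entirely formal and there is no real obstacle: the only thing to be careful about is the sign in the elementary-symmetric-function generating function and the fact that a substitution of commuting variables into a symmetric-function identity is legitimate here precisely because the positive $J_k$ commute. No appeal to the quantum wedge structure or to the commutator (\ref{jkjmkcomm}) beyond this commutativity is needed for the statement.
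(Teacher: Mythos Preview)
Your proof is correct and follows essentially the same approach as the paper: both arguments boil down to applying the specialization $p_k \mapsto J_k$ to the classical generating-function identities expressing $h_k$ and $e_k$ in terms of power sums. The only difference is organizational—the paper packages the multivariable statement as a single specialization of the Cauchy kernel identity in $\Lambda(\mathbf{z}) \otimes \Lambda(\mathbf{w})$ (Macdonald I.4), whereas you establish the single-variable case first and then factor $e^{\L_+(\mathbf{z})} = \prod_i e^{\L_+(z_i)}$ using commutativity of the positive $J_k$; these are two presentations of the same computation.
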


\begin{proof}
  Let $\Lambda = \Lambda (\mathbf{z})$ be the ring of symmetric functions in
  variables $z_1, z_2, \cdots$ over $\mathbb{Q}$. Let $\Lambda (\mathbf{w})$
  be another copy of $\Lambda$, in variables $w_1, w_2,
  \cdots$. We will use the notation of {\cite{MacdonaldHall}} for symmetric
  functions: $p_k (\mathbf{z})$, $h_k (\mathbf{z})$, $e_k (\mathbf{z})$ will
  denote the power sum, complete and elementary symmetric functions, with
  $p_{\lambda} (\mathbf{z}) = \prod p_{\lambda_i} (\mathbf{z})$, $h_{\lambda}
  (\mathbf{z}) = \prod h_{\lambda_i} (\mathbf{z})$, and $m_{\lambda}$ will be
  the monomial symmetric functions.
    
  Remembering that the $u_k$ commute, we may rearrange the factors $u_{\nu_r},
  \cdots, u_{\nu_1}$ so that $\nu_r \geqslant \nu_{r - 1} \geqslant \cdots
  \geqslant \nu_1$ and rewrite the right-hand side of (\ref{ehoequ}) as
  \begin{equation}
    \label{ehorearr} \sum_{\nu} m_{\nu} (\mathbf{z}) u_{\nu_r} \cdots
    u_{\nu_1}
  \end{equation}
  where now the sum is over partitions (of length $\leqslant r$).
  
  In the ring $\Lambda (\mathbf{z}) \otimes
  \Lambda (\mathbf{w})$, we have the identity
  \[ \sum_{\lambda} z_{\lambda}^{- 1} p_{\lambda} (\mathbf{z}) p_{\lambda}
     (\mathbf{w}) = \prod_{i, j} (1 - z_i w_j)^{- 1} = \sum_{\nu} m_{\nu}
     (\mathbf{z}) h_{\nu} (\mathbf{w}) . \]
  This is proved in Macdonald~{\cite{MacdonaldHall}} Section I.4. Now
  \begin{equation}
    \label{ehxyform} \exp \left( \sum_{k = 1}^{\infty} \frac{1}{k} p_k
    (\mathbf{z}) p_k (\mathbf{w}) \right) = \prod_{i, j} (1 - z_i w_j)^{- 1} =
    \sum_{\lambda} m_{\lambda} (\mathbf{z}) h_{\lambda} (\mathbf{w}) .
  \end{equation}
  Indeed,
  \[ - \log (1 - z_i w_j) = \sum_{k = 1}^{\infty} \frac{(z_i w_j)^k}{k} . \]
  Summing over $i, j$ and exponentiating gives (\ref{ehxyform}). Now we
  specialize $p_k (\mathbf{w}) \mapsto J_k$. Then $h_k \mapsto u_k$ since by
  Macdonald~{\cite{MacdonaldHall}} (I.2.14)
  \begin{equation}
    \label{hintermsofp} h_k = \sum_{\lambda \vdash k} z_{\lambda}^{- 1}
    p_{\lambda} .
  \end{equation}
  Thus specializing (\ref{ehxyform}) gives (\ref{ehoequ}). The identity
  (\ref{ehoalt}) follows similarly from the identity
  \[ \exp \left( - \sum_{k = 1}^{\infty} \frac{1}{k} p_k (\mathbf{z}) p_k (-
     \mathbf{w}) \right) = \prod_{i, j} (1 + z_i w_j) = \sum_{\lambda}
     m_{\lambda} (\mathbf{z}) e_{\lambda} (\mathbf{w}) \]
  which follows from (\ref{ehxyform}) on applying the involution in $\Lambda
  (\mathbf{w})$. See {\cite{MacdonaldHall}}~Section~I.2. Under the
  specialization $p_k (\mathbf{w}) \mapsto J_k$ we get
  $e_k \mapsto \widetilde{u}_k$ because
  \[ e_k = \sum_{\lambda \vdash k} z_{\lambda}^{- 1} \varepsilon_{\lambda}
     p_{\lambda} . \]
  This follows from (\ref{hintermsofp}) by applying the involution using
  {\cite{MacdonaldHall}} equation~(I.2.13).
\end{proof}

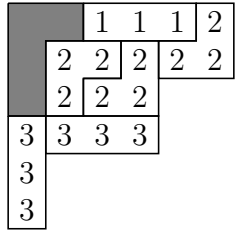
\begin{figure}
  \[\begin{tikzpicture}[scale=.5]
  \draw[fill=gray,semithick] (0,6)--(2,6)--(2,5)--(1,5)--(1,3)--(0,3)--(0,6);
  \draw[semithick] (0,3)--(1,3)--(1,0)--(0,0)--(0,3);
  \node at (0.5,2.5) {$3$};
  \node at (0.5,1.5) {$3$};
  \node at (0.5,0.5) {$3$};
  \draw[semithick] (1,5)--(3,5)--(3,4)--(2,4)--(2,3)--(1,3)--(1,5);
  \node at (1.5,4.5) {$2$};
  \node at (1.5,3.5) {$2$};
  \node at (2.5,4.5) {$2$};
  \draw[semithick] (2,6)--(5,6)--(5,5)--(2,5)--(2,6);
  \node at (2.5,5.5) {$1$};
  \node at (3.5,5.5) {$1$};
  \node at (4.5,5.5) {$1$};
  \draw[semithick] (2,4)--(3,4)--(3,5)--(4,5)--(4,3)--(2,3)--(2,4);
  \node at (2.5,3.5) {$2$};
  \node at (3.5,3.5) {$2$};
  \node at (3.5,4.5) {$2$};
  \draw[semithick] (4,5)--(5,5)--(5,6)--(6,6)--(6,4)--(4,4)--(4,5);
  \node at (4.5,4.5) {$2$};
  \node at (5.5,4.5) {$2$};
  \node at (5.5,5.5) {$2$};
  \draw[semithick] (1,3)--(4,3)--(4,2)--(1,2)--(1,3);
  \node at (1.5,2.5) {$3$};
  \node at (2.5,2.5) {$3$};
  \node at (3.5,2.5) {$3$};
  \end{tikzpicture}\]
  \caption{A $3$-ribbon tableau with spin $5$ and weight $(1,3,2)$.}
  \label{fig:ribbon}
\end{figure}

We recall from~{\cite{LLTRibbon,LamRibbon,LamBoson}} that an
\textit{$n$-ribbon} is a skew partition $\lambda / \mu$ of size $n$ that
is connected and does not contain any $2 \times 2$ block. (Here we
are identifying the skew partition with its Young diagram.)
The \textit{spin} of an $n$-ribbon is its height in columns, minus 1. A
\textit{horizontal $n$-ribbon strip} is a skew shape $\lambda / \mu$ that can
be decomposed into disjoint $n$-ribbons, each of which has its top-right most
box adjacent to $\mu$, or else its top-right most box lies in the first line.
The \textit{spin} $s(\lambda / \mu)$ of $\lambda / \mu$ is then the sum of the spins of its
constituent $n$-ribbons. Thus we are following~{\cite{LamRibbon}} in our
definition of spin, not~{\cite{LLTRibbon}} who define the spin to be half
$s(\lambda / \mu)$. See Figure~\ref{fig:ribbon} for an example
illustrating the concepts of $n$-ribbon and horizontal $n$-ribbon strip.

An \textit{$n$-ribbon skew tableau $T$} of shape $\lambda / \mu$ is a
sequence of partitions
\begin{equation}
  \label{nribbontabdef} \mu = \alpha^0 \subset \alpha^1 \subset \cdots \subset
  \alpha^r = \lambda,
\end{equation}
where $\alpha^{i + 1} / \alpha^i$ is a horizontal $n$-ribbon strip. We may
associate with such data a tableau in which the strip
$\alpha^{i + 1} / \alpha^i$ is filled with $i$'s. The weight $\nu = \wt(T)$ will then be
$(\nu_1, \cdots, \nu_r)$ where $\nu_i$ is $\alpha^{i + 1} / \alpha^i$ divided by~$n$.

Now we define the \textit{LLT} or \textit{ribbon symmetric function}
\[ \mathcal{G}_{\lambda / \mu}^n (\mathbf{z}) = \mathcal{G}_{\lambda / \mu}^n (\mathbf{z}; q) = \sum_T q^{s (T)}
   \mathbf{z}^{\wt (T)}, \]
where the sum is over $n$-ribbon skew tableaux of shape $\lambda / \mu$. 
(Here $v=q^2$.) This
is consistent with the notation in~{\cite{LamRibbon}} but differs from the
notation in~{\cite{LLTRibbon}}. 

Let us regard $J_k$ as in prior sections to be an operator on the quantum Fock
space $\mathfrak{F}_0$. If $\lambda$ is a partition, let $| \lambda \rangle$ denote the element
$u_{\lambda_1} \wedge u_{\lambda_2 - 1} \wedge \cdots$ of $\mathfrak{F}_0$.
If $\lambda$ is the empty partition, we will instead use $|0 \rangle$ to denote
the vacuum. Consistent with our earlier notation, we will denote by $\langle
\mu |e^{\L_+ (\mathbf{z})} | \lambda \rangle$ the coefficient of $| \mu
\rangle$ in $e^{\L_+ (\mathbf{z})} | \lambda \rangle$, where we now regard
$e^{\L_+ (\mathbf{z})}$ as an operator on~$\mathfrak{F}_0$.

Following {\cite{LLTRibbon,LamRibbon}} we define an operator $\mathcal{U}_k$
on $\mathfrak{F}_0$ by
\[ \mathcal{U}_k | \lambda \rangle = \sum_{\substack{\text{$\lambda / \mu$ a horizontal
   $n$-ribbon strip}\\|\lambda/\mu|=n k}} q^{s (\lambda / \mu)}\,| \mu \rangle, \]
where the sum is over $\mu \subset \lambda$ such that $\lambda / \mu$ is a
horizontal $n$-ribbon strip of size $nk$. Similarly let
\[ \widetilde{\mathcal{U}}_k | \lambda \rangle = \sum_{\substack{\text{$\lambda / \mu$ a
   vertical $n$-ribbon strip}\\|\lambda/\mu|=nk}} q^{s (\lambda / \mu)} | \mu \rangle. \]
(Vertical $n$-ribbon strips are defined similarly to horizontal ones.)

We note that the notation in {\cite{LLTRibbon}} differs from that in
{\cite{LamRibbon}} (and also {\cite{KMS}}) by the transformation $q \mapsto -
q^{- 1}$. Our notation is consistent with {\cite{LamRibbon}}.

There is a homomorphism $\psi$ from the ring $\Lambda$ of symmetric functions
to the ring of $U_q (\widehat{\mathfrak{sl}}_n)$-module endomorphisms
of $\mathfrak{F}_0$. This is the map that sends a symmetric polynomial $f$
to the endomorphism $f(y_1^{-1},y_2^{-1},\cdots)$ where the $y_i$ are
as in Section~\ref{sec:aha}. If $s_\lambda$ is a Schur polynomial,
the endomorphisms $\psi(s_\lambda)$ were used in \cite{LeclercThibon}
in an analog of the Steinberg tensor product theorem for $\mathfrak{F}$.
See also~\cite{LaniniRam}.

By Theorems~3 and 5 of~{\cite{LamRibbon}} (following Leclerc and Thibon~\cite{LeclercThibon})
\[ \psi (h_k) =\mathcal{U}_k, \qquad \psi (e_k) =
\widetilde{\mathcal{U}}_k,\qquad \psi (p_k) = J_k . \]
Thus $u_k$ is an element of the abstract polynomial ring
generated by $J_1,J_2,\cdots$, while $\mathcal{U}_k$ is
an endomorphism of $\mathfrak{F}_0$ that corresponds to
$u_k$ under the action of the $J_k$ on $\mathfrak{F}_0$.

\begin{theorem}
  \label{thmlltasvo}
  The polynomial $\mathcal{G}_{\lambda / \mu}^n$ is symmetric and
  \begin{equation}
    \label{lltasvo}
    \mathcal{G}_{\lambda / \mu}^n (\mathbf{z} ; q) = \langle \mu |e^{\L_+
    (\mathbf{z})} | \lambda \rangle .
  \end{equation}
\end{theorem}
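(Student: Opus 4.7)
The plan is to reduce the identity \eqref{lltasvo} to a purely combinatorial sum by expanding $e^{\L_+(\mathbf{z})}$ as in Proposition~\ref{operatorequality} and then identifying each factor with a horizontal $n$-ribbon strip operator via the Leclerc--Thibon/Lam correspondence recalled just before the theorem statement. Symmetry is the easy half: since $\L_+(\mathbf{z}) = \sum_{k\geq 1} \frac{1}{k} p_k(\mathbf{z}) J_k$ depends on $\mathbf{z}$ only through the power-sum symmetric functions $p_k(\mathbf{z})$, the operator $e^{\L_+(\mathbf{z})}$ is itself invariant under permutations of $z_1,\ldots,z_r$, and hence so is the scalar $\langle\mu|e^{\L_+(\mathbf{z})}|\lambda\rangle$.

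For the main equality, I first apply Proposition~\ref{operatorequality} to write
\[
 e^{\L_+(\mathbf{z})} = \sum_{\nu_1,\ldots,\nu_r\geq 0} z_1^{\nu_1}\cdots z_r^{\nu_r}\, u_{\nu_r}\cdots u_{\nu_1},
\]
where $u_k = \sum_{\lambda\vdash k} z_\lambda^{-1} J_\lambda$ is the image of the complete symmetric function $h_k$ under the substitution $p_k \mapsto J_k$. Next I invoke the homomorphism $\psi\colon \Lambda \to \End(\mathfrak{F}_0)$ recalled in the paragraph preceding the theorem, which implements precisely this substitution on $\mathfrak{F}_0$ and satisfies $\psi(h_k) = \mathcal{U}_k$. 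Thus when we view $u_k$ as an operator on $\mathfrak{F}_0$ through the $J_k$ action, it becomes the horizontal $n$-ribbon strip operator $\mathcal{U}_k$, and consequently
\[
 \langle\mu|e^{\L_+(\mathbf{z})}|\lambda\rangle
 = \sum_{\nu_1,\ldots,\nu_r\geq 0} z_1^{\nu_1}\cdots z_r^{\nu_r}\,\langle\mu|\mathcal{U}_{\nu_r}\cdots\mathcal{U}_{\nu_1}|\lambda\rangle.
\]

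The last step is to unwind the matrix coefficient combinatorially. By the definition of $\mathcal{U}_k$, iterating yields
\[
 \mathcal{U}_{\nu_r}\cdots\mathcal{U}_{\nu_1}|\lambda\rangle = \sum_{\mu=\alpha^0\subset\alpha^1\subset\cdots\subset\alpha^r=\lambda} q^{s(\alpha^1/\alpha^0)+\cdots+s(\alpha^r/\alpha^{r-1})}\,|\alpha^0\rangle,
\]
summed over chains in which $\alpha^{i}/\alpha^{i-1}$ is a horizontal $n$-ribbon strip of size $n\nu_{r-i+1}$. Reading this chain as an $n$-ribbon skew tableau $T$ of shape $\lambda/\mu$ in the sense of \eqref{nribbontabdef}, the contribution is exactly $q^{s(T)}\mathbf{z}^{\wt(T)}$, so summing over all $\nu_1,\ldots,\nu_r$ reproduces $\mathcal{G}^n_{\lambda/\mu}(\mathbf{z};q) = \sum_T q^{s(T)}\mathbf{z}^{\wt(T)}$, as desired.

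The main obstacle is essentially bookkeeping: one must be careful that the order of multiplication $\mathcal{U}_{\nu_r}\cdots\mathcal{U}_{\nu_1}$ matches the convention by which the strip of $i$'s is $\alpha^i/\alpha^{i-1}$ in \eqref{nribbontabdef}, so that the monomial weight $z_1^{\nu_1}\cdots z_r^{\nu_r}$ corresponds to the correct composition $\wt(T)$. Since the $u_k$ commute, any such labeling discrepancy can be absorbed, and the symmetry already established shows the final answer is independent of these choices; this is precisely where the commutativity used in Proposition~\ref{operatorequality} becomes essential.
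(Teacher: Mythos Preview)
Your proposal is correct and follows essentially the same route as the paper: apply Proposition~\ref{operatorequality}, identify each $u_k$ with the horizontal ribbon-strip operator $\mathcal{U}_k$ via $\psi(h_k)=\mathcal{U}_k$, and read off the sum over $n$-ribbon tableaux. The only minor difference is that you deduce symmetry directly from the power-sum dependence of $\L_+(\mathbf{z})$, whereas the paper cites the commutativity of the $\mathcal{U}_k$ (equivalently of the $J_k$), which amounts to the same thing.
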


\begin{proof}
  By Proposition~\ref{operatorequality} the right-hand
  side equals
  \[ \sum_{\nu_1 = 0}^{\infty} \cdots \sum_{\nu_r = 0}^{\infty} z_1^{\nu_1}
     \cdots z_r^{\nu_r}  \langle \mu |\mathcal{U}_{\nu_r} \cdots
     \mathcal{U}_{\nu_1} | \lambda \rangle . \]
  Now the right-hand side enumerates $n$-ribbon tableaux in the definition
  (\ref{nribbontabdef}) and so we obtain (\ref{lltasvo}).
  The symmetry of $\mathcal{G}^n_{\lambda/\mu}$ is due to Lascoux,
  Leclerc and Thibon. It follows from the fact that the
  operators $\mathcal{U}_k$ commute.
\end{proof}

A similar result for Hall-Littlewood polynomials was found by
Jing~\cite{JingHallLittlewood}. Another vertex operator realization
of Hall-Littlewood polynomials may be found in Tsilevich~\cite{Tsilevich}.
Hall-Littlewood polynomials are
limits of LLT polynomials by~\cite{LLTRibbon}, Theorem~VI.6.

As an application of Theorem~\ref{thmlltasvo} we will deduce the Cauchy
identity for LLT polynomials, a result that is due to
Lam~{\cite{LamThesis,LamRibbon,LamBoson}}, proved also by
van Leeuwen~\cite{vLeeuwenSpin}. We will work with two sets of
variables, $z_1, \cdots, z_r$ and $w_1, \cdots, w_r$. Let
\[ \qquad \L_+ (\mathbf{z})^{\ast} = \sum_{k = 1}^{\infty} \frac{p_k
   (\mathbf{z})}{k} J_{- k}. \]
If the $z_i$ are real, then $\L_+(\mathbf{z})$ and $\L_+(\mathbf{z})^*$
are adjoints by Proposition~\ref{jkadjoint}.

We will denote $\mathcal{G}_{\lambda} =\mathcal{G}^n_{\lambda / \varnothing}$
where $\varnothing$ is the empty partition. We have
\begin{equation}
  \label{adjiphv} \langle \lambda |\L_+ (\mathbf{z})^{\ast} |0 \rangle
  =\mathcal{G}_{\lambda} (\mathbf{z}) .
\end{equation}
Indeed, since this is a purely algebraic identity, it is sufficient
to prove this if $z_i$ are real. Then since $\L_+(\mathbf{z})$ and
$\L_+(\mathbf{z})^\ast$ are adjoints, this follows by taking the
the conjugate of (\ref{lltasvo}).

Lam~\cite{LamRibbon} proved a version of the Cauchy identity
for LLT polynomials. We will show how this can be deduced from
Theorem~\ref{thmlltasvo}.

\begin{proposition}
  \label{explltcom}We have
  \[ \, \exp (\L_+ (\mathbf{z})) \, \exp (\L_+ (\mathbf{w})^{\ast}) = 
     \Omega (\mathbf{z}, \mathbf{w}) \, \exp (\L_+ (\mathbf{w})^{\ast}) \,
     \exp (\L_+ (\mathbf{z})) \]
  where
  \[ \Omega (\mathbf{z}, \mathbf{w}) = \prod_{t = 0}^{n - 1} \prod_{i, j}
     (1 - v^t z_i w_j)^{- 1} . \]
\end{proposition}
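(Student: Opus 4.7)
The plan is to use the Baker--Campbell--Hausdorff identity cited earlier in the paper: if $[A,B]$ commutes with both $A$ and $B$, then $e^A e^B = e^{[A,B]} e^B e^A$. Here I take $A = \L_+(\mathbf{z})$ and $B = \L_+(\mathbf{w})^{\ast}$. Since $A$ is a linear combination of the $J_k$ with $k \geq 1$ and $B$ is a linear combination of the $J_{-k}$ with $k \geq 1$, the commutator $[A,B]$ is a sum of $[J_k, J_{-l}]$, each of which, by the commutation relation (\ref{jkjmkcomm}), is a scalar. Thus $[A,B]$ is central, so BCH applies and the problem reduces to showing $e^{[A,B]} = \Omega(\mathbf{z},\mathbf{w})$.

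The computation of $[A,B]$ then proceeds as follows. Expanding and using (\ref{jkjmkcomm}):
\begin{equation*}
[A,B] = \sum_{k,l \geq 1} \frac{p_k(\mathbf{z}) p_l(\mathbf{w})}{kl} [J_k, J_{-l}] = \sum_{k \geq 1} \frac{p_k(\mathbf{z}) p_k(\mathbf{w})}{k} \cdot \frac{1-v^{nk}}{1-v^k}.
\end{equation*}
The key elementary step is the factorization $\frac{1-v^{nk}}{1-v^k} = \sum_{t=0}^{n-1} v^{tk}$, which produces the product over $t$ that appears in $\Omega$. Swapping the order of summation gives
\begin{equation*}
[A,B] = \sum_{t=0}^{n-1} \sum_{k \geq 1} \frac{v^{tk}\, p_k(\mathbf{z}) p_k(\mathbf{w})}{k}.
\end{equation*}

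Next I would use the standard logarithmic identity for power sums: writing $p_k(\mathbf{z}) p_k(\mathbf{w}) = \sum_{i,j}(z_i w_j)^k$ and applying $\sum_{k \geq 1} x^k/k = -\log(1-x)$, I get
\begin{equation*}
[A,B] = -\sum_{t=0}^{n-1} \sum_{i,j} \log(1 - v^t z_i w_j) = \log \Omega(\mathbf{z},\mathbf{w}).
\end{equation*}
Exponentiating and substituting into BCH yields the claimed identity.

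The only subtle point, rather than a true obstacle, is ensuring convergence/formal validity: the operator $\L_+(\mathbf{w})^\ast$ involves negative-mode $J_{-k}$ whose action is infinite in the sense discussed after (\ref{hminadef}), so strictly speaking this identity should be interpreted as an identity of matrix coefficients $\langle \mu \mid \cdots \mid \lambda \rangle$ between fixed basis vectors. Since for such matrix coefficients only finitely many terms contribute (as noted for $H_-$ in the introduction), the formal manipulation above is justified term-by-term. Alternatively, one may treat the identity in the ring of formal power series in $\mathbf{z},\mathbf{w}$, since the $p_k$ are weight-homogeneous and the BCH computation takes place inside the polynomial algebra generated by the $J_k$ modulo its central commutation relations, where no convergence issue arises.
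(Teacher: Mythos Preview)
Your proof is correct and follows essentially the same approach as the paper: compute the commutator $[\L_+(\mathbf{z}), \L_+(\mathbf{w})^\ast]$ using the Heisenberg relation (\ref{jkjmkcomm}), recognize it as $\log \Omega(\mathbf{z},\mathbf{w})$ via the geometric sum $\frac{1-v^{nk}}{1-v^k} = \sum_{t=0}^{n-1} v^{tk}$, and apply Baker--Campbell--Hausdorff. Your writeup is in fact more detailed than the paper's, which compresses the same computation into a single displayed line.
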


\begin{proof}
  Using (\ref{jkjmkcomm}) we have
  \[ [\L_+ (\mathbf{z}), \L_+ (\mathbf{w})^{\ast}] = \sum_{k = 1}^{\infty}
     \frac{1}{k} \left( \frac{v^{n k} - 1}{v^k - 1} \right) z_i^k w_j^k = \log
     \; \Omega (\mathbf{z}, \mathbf{w}) . \]
  The statement then follows from the Baker-Campbell-Hausdorff formula.
\end{proof}

We recall that if $\lambda$ is a partition, there is a unique smallest
partition $\delta$ that can be obtained by removing ribbon $n$-strips
from $\lambda$. The partition $\delta$ is called the \textit{$n$-core}
of $\lambda$. If $\delta=\lambda$ then $\lambda$ is called an
\textit{$n$-core} partition. See~\cite{MacdonaldHall}, Example~I.1.8.

\begin{lemma}
If $\delta$ is an $n$-core then $J_k|\delta\rangle=0$ for all $k>0$.
\end{lemma}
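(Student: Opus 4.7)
The plan is to exploit the ring homomorphism $\psi : \Lambda \to \End(\mathfrak{F}_0)$ recalled just before the lemma and reduce the vanishing of $J_k |\delta\rangle$ to the more transparent vanishing of $\mathcal{U}_j |\delta\rangle$ for $j \geq 1$.

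The first step is to observe that $\mathcal{U}_j |\delta\rangle = 0$ for every $j \geq 1$. This is immediate from the definition of $\mathcal{U}_j$ as a sum over horizontal $n$-ribbon strips of size $nj$ contained in $\delta$, together with the hypothesis that $\delta$ is an $n$-core: no nonempty $n$-ribbon strip can be removed from $\delta$, so the sum is empty.

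The second step is to express $J_k$ as a polynomial in $\mathcal{U}_1, \ldots, \mathcal{U}_k$ with no constant term. This follows by applying $\psi$ to Newton's identity
\[ k h_k = \sum_{i=1}^{k} p_i\, h_{k-i}, \]
which recursively writes $p_k$ as a $\mathbb{Z}$-polynomial in $h_1, \ldots, h_k$ in which every monomial contains at least one factor $h_j$ with $j \geq 1$ (this ``no constant term'' feature is immediate from the induction, starting from $p_1 = h_1$). Since $\psi$ is a ring homomorphism with $\psi(p_k) = J_k$ and $\psi(h_j) = \mathcal{U}_j$, the same polynomial identity holds on the operator side.

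Finally, because the $h_j$ commute in $\Lambda$, so do the $\mathcal{U}_j$ on $\mathfrak{F}_0$. Thus each monomial in the resulting expression for $J_k$ may be reordered to place some $\mathcal{U}_j$ with $j \geq 1$ as the rightmost factor, and then annihilates $|\delta\rangle$ by the first step. Summing over monomials gives $J_k |\delta\rangle = 0$, as required. There is no substantial obstacle: the only point that needs verification is the absence of a constant term in the Newton expansion, which is routine.
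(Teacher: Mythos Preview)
Your proof is correct and rests on the same core observation as the paper's: $\mathcal{U}_j|\delta\rangle=0$ for all $j\geq 1$ because an $n$-core admits no removable $n$-ribbon, and then one transfers this to the $J_k$ via the homomorphism $\psi$ and the relation between the $p_k$ and the $h_j$ in $\Lambda$.

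The packaging differs slightly. The paper invokes Proposition~\ref{operatorequality} to get $e^{L_+(\mathbf{z})}|\delta\rangle=|\delta\rangle$ and then reads off $J_k|\delta\rangle=0$ from the vanishing of $L_+(\mathbf{z})|\delta\rangle$; you instead invert Newton's identity to write $p_k$ as a polynomial in $h_1,\ldots,h_k$ with no constant term and apply $\psi$ directly. Both routes encode the same symmetric-function relationship between power sums and complete homogeneous functions; yours is marginally more self-contained in that it avoids the generating-function identity, while the paper's version reuses machinery already in place.
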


\begin{proof}
    Clearly $\mathcal{U}_k|\delta\rangle=0$ for $k>0$, and so
    by Proposition~\ref{operatorequality}
    $e^{\L_+(\mathbf{z})}|\delta\rangle=|\delta\rangle$.
    This means $J_k|\delta\rangle=0$.
\end{proof}

\begin{theorem}[Lam]
  \label{thm:lamcauchy}
  Let $\delta$ be an $n$-core. Then
  \begin{equation}
       \sum_{\lambda} \mathcal{G}_{\lambda/\delta} (\mathbf{z})
       \mathcal{G}_{\lambda/\delta} (\mathbf{w}) = \Omega (\mathbf{z},
       \mathbf{w}) ,
     \end{equation}
  where the sum is over all partitions with $n$-core~$\delta$.
\end{theorem}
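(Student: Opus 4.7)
The strategy is to compute the matrix element $\langle\delta|e^{\L_+(\mathbf{z})}e^{\L_+(\mathbf{w})^{\ast}}|\delta\rangle$ in two ways.

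First, I would insert a complete set of states $\sum_\lambda |\lambda\rangle\langle\lambda|$ (summed over all partitions, using the orthonormality of the normal-ordered basis) between the two exponentials, and then use Theorem~\ref{thmlltasvo} together with (\ref{adjiphv}) (more precisely, its skew generalization obtained by taking adjoints, so that $\langle\lambda|e^{\L_+(\mathbf{w})^{\ast}}|\delta\rangle = \mathcal{G}_{\lambda/\delta}(\mathbf{w})$) to rewrite each factor:
\begin{equation*}
\langle\delta|e^{\L_+(\mathbf{z})}e^{\L_+(\mathbf{w})^{\ast}}|\delta\rangle
= \sum_\lambda \langle\delta|e^{\L_+(\mathbf{z})}|\lambda\rangle \langle\lambda|e^{\L_+(\mathbf{w})^{\ast}}|\delta\rangle
= \sum_\lambda \mathcal{G}_{\lambda/\delta}(\mathbf{z})\,\mathcal{G}_{\lambda/\delta}(\mathbf{w}).
\end{equation*}
The restriction to partitions with $n$-core $\delta$ is automatic, since $\mathcal{G}_{\lambda/\delta}$ vanishes otherwise: a horizontal $n$-ribbon strip can only be removed from $\lambda$ to reach $\delta$ when $\lambda$ and $\delta$ share the same $n$-core.

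Second, I would commute the exponentials using Proposition~\ref{explltcom}, obtaining
\begin{equation*}
\langle\delta|e^{\L_+(\mathbf{z})}e^{\L_+(\mathbf{w})^{\ast}}|\delta\rangle
= \Omega(\mathbf{z},\mathbf{w})\,\langle\delta|e^{\L_+(\mathbf{w})^{\ast}}e^{\L_+(\mathbf{z})}|\delta\rangle,
\end{equation*}
and then apply the preceding lemma, which says $J_k|\delta\rangle=0$ for $k>0$ when $\delta$ is an $n$-core. This gives $e^{\L_+(\mathbf{z})}|\delta\rangle=|\delta\rangle$ on the right, and by taking adjoints (recalling $J_{-k}=J_k^{\dagger}$ from Proposition~\ref{jkadjoint}, which makes $\L_+(\mathbf{w})^{\ast}$ the adjoint of $\L_+(\mathbf{w})$ when the variables are real) also $\langle\delta|e^{\L_+(\mathbf{w})^{\ast}}=\langle\delta|$. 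Since $\langle\delta|\delta\rangle=1$, the right-hand side equals $\Omega(\mathbf{z},\mathbf{w})$, and comparing the two expressions yields the theorem.

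The only subtlety is the adjoint argument, which requires treating $\mathbf{z},\mathbf{w}$ as real. As in the Gamma-ice argument of Section~\ref{sec:gammaproof}, this is harmless because the identity to be proved is an equality of formal power series (equivalently, of polynomials in each graded piece) in $\mathbf{z},\mathbf{w}$ with coefficients in $\mathbb{Q}(q)$, so it suffices to verify it on a Zariski-dense set. No step should present a genuine obstacle, since all the pieces — the vertex-operator expression for $\mathcal{G}_{\lambda/\mu}$, the commutation relation producing $\Omega$, and the annihilation of $|\delta\rangle$ by the positive Heisenberg generators — have already been established.
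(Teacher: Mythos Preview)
Your proposal is correct and follows essentially the same argument as the paper: evaluate $\langle\delta|e^{\L_+(\mathbf{z})}e^{\L_+(\mathbf{w})^{\ast}}|\delta\rangle$ two ways, once by inserting a resolution of the identity and invoking Theorem~\ref{thmlltasvo} and (\ref{adjiphv}), and once by commuting via Proposition~\ref{explltcom} and using the preceding lemma together with the adjoint relation. The paper likewise reduces to real $\mathbf{w}$ to justify the adjoint step, so even your handling of that subtlety matches.
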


\begin{proof}
  We will prove this under the assumption that $\mathbf{w}$ is real. Since
  this is a purely algebraic identity, that is sufficient. We evaluate
  \begin{equation}
    \label{dobletip} \langle \delta| \exp (\L_+ (\mathbf{z})) \exp (\L_+^{\ast}
    (\mathbf{w})) |\delta \rangle
  \end{equation}
  in two different ways. First, by Proposition~\ref{explltcom}, it equals
  \[ \Omega (\mathbf{z}, \mathbf{w}) \langle \delta| \exp(\L_+^{\ast} (\mathbf{w}))
     \exp (\L_+ (\mathbf{z})) |\delta \rangle = \Omega (\mathbf{z},
     \mathbf{w}), \]
  since if $k > 0$ we have $J_k |\delta \rangle = \langle \delta| J_{- k} = 0$, so $\exp
  (\L_+ (\mathbf{z})) |\delta \rangle = |\delta \rangle$, etc. On the other hand, using
  Theorem~\ref{thmlltasvo} and (\ref{adjiphv}) the coefficient
  (\ref{dobletip}) equals
  \[ \sum_{\lambda} \langle \delta| \exp (\L_+ (\mathbf{z})) | \lambda \rangle\,
     \langle \lambda | \exp (\L_+ (\mathbf{w}))^{\ast} |\delta \rangle =
     \sum_{\lambda} \mathcal{G}_{\lambda/\delta} (\mathbf{z}) \mathcal{G}_{\lambda/\delta}
     (\mathbf{w}) .\qedhere \]
\end{proof}

Now we recall the definition of the {\textit{super ribbon function}}
$\mathcal{G}_{\lambda / \mu}^n (\mathbf{z}|\mathbf{w}; q)$ defined
in~{\cite{LamRibbon}}, Definition~29. For this we require a double alphabet $1
\prec 1' \prec 2 \prec 2' \prec \cdots \prec r \prec r'$. A {\textit{super
ribbon tableau $T$}} is a sequence of partitions
\[ \mu = \lambda_{r + 1} \subset \lambda_{r'} \subset \lambda_r \subset \cdots
   \subset \lambda_{1'} \subset \lambda_1 = \lambda . \]
It is assumed that $\lambda_i / \lambda_{i'}$ is a horizontal $n$-ribbon strip,
and that $\lambda_{i'} / \lambda_{i + 1}$ is a vertical $n$-ribbon strip. We can
label the tableaux by labeling the boxes in $\lambda_i / \lambda_{i'}$ with
$i$, and the boxes in $\lambda_{i'} / \lambda_{i + 1}$ with $i'$. Let
$\wt (T) = (\nu_1, \cdots, \nu_r)$ where $\nu_i$ is the number of $i$ in
the tableau, and $\wt' (T) = (\nu_1', \cdots, \nu_r')$ where $\nu_i$ is
the number of~$i'$. Then we define the super ribbon function
\[ \mathcal{G}^n_{\lambda / \mu} (\mathbf{z}|\mathbf{w}; q) = \sum_T q^{s
   (T)} \mathbf{z}^{\wt (T)} (-\mathbf{w})^{\wt' (T)} \]
where the sum is over super ribbon tableaux.

\begin{theorem}
  \label{thm:superllt}
  For any pair of partitions $\mu \subseteq \lambda$,
  \begin{equation}
    \label{supervoformula}
    \langle \mu |e^{\L_+ (\mathbf{z})} e^{-\L_+ (\mathbf{w})} |
     \lambda \rangle =\mathcal{G}_{\lambda / \mu}^n (\mathbf{z}|\mathbf{w};
     q)\,.
  \end{equation}
  $\mathcal{G}_{\lambda/\mu}^n$ vanishes unless $\lambda$ and $\mu$ have
  the same $n$-core.
\end{theorem}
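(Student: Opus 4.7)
The plan is to parallel the proof of Theorem~\ref{thmlltasvo}, using Proposition~\ref{operatorequality} and the identification $\psi(u_m)=\mathcal{U}_m$, $\psi(\widetilde{u}_m)=\widetilde{\mathcal{U}}_m$. Since $[J_k,J_l]=0$ for $k,l$ of the same sign, the operators $\L_+(z_i)$ and $\L_+(w_j)$ all commute, so we may rewrite the product in interleaved form:
\[
  e^{\L_+(\mathbf{z})}e^{-\L_+(\mathbf{w})}
  = e^{-\L_+(w_r)}e^{\L_+(z_r)}\cdots e^{-\L_+(w_1)}e^{\L_+(z_1)}.
\]
The single-variable specialization of Proposition~\ref{operatorequality} gives $e^{\L_+(z)}=\sum_{m\geq 0}z^m u_m$ and $e^{-\L_+(w)}=\sum_{m\geq 0}(-w)^m\widetilde{u}_m$ (the latter obtained by substituting $-w$ for the single variable in the $\widetilde u$-identity).

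Expanding these into the matrix element and using $\psi$ to interpret the $u_m$ and $\widetilde{u}_m$ as ribbon-strip operators on $\mathfrak{F}_0$ yields
\[
  \langle\mu|e^{\L_+(\mathbf{z})}e^{-\L_+(\mathbf{w})}|\lambda\rangle
  =\sum_{\nu_i,\nu'_i\geq 0}\Bigl(\prod_{i=1}^r z_i^{\nu_i}(-w_i)^{\nu'_i}\Bigr)
    \langle\mu|\widetilde{\mathcal{U}}_{\nu'_r}\mathcal{U}_{\nu_r}\cdots\widetilde{\mathcal{U}}_{\nu'_1}\mathcal{U}_{\nu_1}|\lambda\rangle.
\]
Reading right-to-left, $\mathcal{U}_{\nu_1}$ strips from $|\lambda\rangle$ a horizontal $n$-ribbon strip of $\nu_1$ ribbons, then $\widetilde{\mathcal{U}}_{\nu'_1}$ strips a vertical strip of $\nu'_1$ ribbons, and the alternation continues inward to $\mu$. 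The resulting chain
\[
  \mu=\lambda_{r+1}\subset\lambda_{r'}\subset\lambda_r\subset\cdots\subset\lambda_{1'}\subset\lambda_1=\lambda,
\]
with $\lambda_i/\lambda_{i'}$ horizontal of $\nu_i$ ribbons and $\lambda_{i'}/\lambda_{i+1}$ vertical of $\nu'_i$ ribbons, is precisely a super ribbon tableau in the sense of Definition~29 of~\cite{LamRibbon}. The definitions of $\mathcal{U}_m$ and $\widetilde{\mathcal{U}}_m$ supply the spin factor $q^{s(T)}$, so collecting contributions gives $\mathcal{G}^n_{\lambda/\mu}(\mathbf{z}|\mathbf{w};q)$, proving (\ref{supervoformula}).

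The vanishing statement is then immediate: any super ribbon tableau from $\mu$ up to $\lambda$ is a sequence of $n$-ribbon additions, and adding a single $n$-ribbon preserves the $n$-core of a partition, so unless $\mu$ and $\lambda$ have the same $n$-core the set of tableaux contributing to $\mathcal{G}^n_{\lambda/\mu}$ is empty. The one delicate point in the argument is matching the alternation pattern and the sign $(-w_i)^{\nu'_i}$ to Lam's convention $(-\mathbf{w})^{\wt'(T)}$; the interleaved factorization above is chosen precisely to make this correspondence direct.
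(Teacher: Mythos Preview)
Your proof is correct and follows essentially the same approach as the paper: both expand $e^{\L_+(\mathbf{z})}$ and $e^{-\L_+(\mathbf{w})}$ via Proposition~\ref{operatorequality}, use commutativity (you commute the exponentials first, the paper commutes the resulting $\mathcal{U}$'s and $\widetilde{\mathcal{U}}$'s afterward) to interleave them in the order $\widetilde{\mathcal{U}}_{\nu'_r}\mathcal{U}_{\nu_r}\cdots\widetilde{\mathcal{U}}_{\nu'_1}\mathcal{U}_{\nu_1}$, and then match this with the definition of a super ribbon tableau. The vanishing statement is handled identically.
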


\begin{proof}
  Since the operators $\mathcal{U}_{\nu}$ and $\widetilde{\mathcal{U}}_{\nu'}$
  commute, we may apply Proposition~\ref{operatorequality} and rearrange to
  obtain
  \[ \langle \mu |e^{\L_+ (\mathbf{z})} e^{-\L_+ (\mathbf{w})} |
     \lambda \rangle = \sum_{\nu, \nu'} \mathbf{z}^{\wt (\nu)}
     (-\mathbf{w})^{\wt (\nu')} \langle \mu |
     \widetilde{\mathcal{U}}_{\nu'_r} \mathcal{U}_{\nu_r} \cdots
     \widetilde{\mathcal{U}}_{\nu'_1} \mathcal{U}_{\nu_1} | \lambda \rangle . \]
  Each operator on the right-hand side subtracts either a vertical or horizontal $n$-ribbon strip, and
  (\ref{supervoformula}) follows. The second statement is clear since removing
  a vertical or horizontal $n$-ribbon strip from a partition does not change
  its $n$-core.  
\end{proof}

\begin{corollary}\label{corllt=smf}
  In the notation of (\ref{metaplecticsf}),
  \[\mathcal{M}^n_{\lambda/\mu}(\mathbf{z}):=\langle\mu|T_\Delta(\mathbf{z})|\lambda\rangle=\mathcal{G}_{\lambda,\mu}^n(\mathbf{z}^n|v\mathbf{z}^n).\]
  $\mathcal{M}^n_{\lambda/\mu}$ vanishes unless $\lambda$ and $\mu$ have
  the same $n$-core.
\end{corollary}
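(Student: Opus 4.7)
The plan is to reduce this identity directly to Theorem~\ref{thm:superllt} via the Main Theorem. First I would use Theorem~\ref{thm:eH-equals-T} to write $T_\Delta(z_i) = e^{H_+(z_i)}$ for each $i$, and then use the fact that $T_\Delta(\mathbf{z}) = T_\Delta(z_1)\cdots T_\Delta(z_r)$ together with the commutativity $[J_k,J_l]=0$ for $k,l>0$ (which follows from \eqref{jkjmkcomm}) to combine the exponentials into a single one:
\begin{equation*}
T_\Delta(\mathbf{z}) = \exp\!\left(\sum_{i=1}^r H_+(z_i)\right) = \exp\!\left(\sum_{k=1}^\infty \frac{1}{k}(1-v^k)\,p_{nk}(\mathbf{z})\,J_k\right),
\end{equation*}
where I used $\sum_i z_i^{nk} = p_{nk}(\mathbf{z})$.

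The next step is to recognize the exponent as a difference of two operators of the type appearing in \eqref{eq:H0}. Writing $\mathbf{z}^n := (z_1^n,\ldots,z_r^n)$ and $v\mathbf{z}^n := (vz_1^n,\ldots,vz_r^n)$, I would observe
\begin{equation*}
p_k(\mathbf{z}^n) = p_{nk}(\mathbf{z}), \qquad p_k(v\mathbf{z}^n) = v^k p_{nk}(\mathbf{z}),
\end{equation*}
so that $\sum_i H_+(z_i) = \L_+(\mathbf{z}^n) - \L_+(v\mathbf{z}^n)$. Using once more that all the $J_k$ with $k>0$ commute, I may split the exponential:
\begin{equation*}
T_\Delta(\mathbf{z}) = e^{\L_+(\mathbf{z}^n)}\,e^{-\L_+(v\mathbf{z}^n)}.
\end{equation*}

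Taking matrix coefficients $\langle\mu|\cdot|\lambda\rangle$ and applying Theorem~\ref{thm:superllt} with the variable sets $\mathbf{z}^n$ and $v\mathbf{z}^n$ yields
\begin{equation*}
\mathcal{M}^n_{\lambda/\mu}(\mathbf{z}) = \langle\mu|e^{\L_+(\mathbf{z}^n)}e^{-\L_+(v\mathbf{z}^n)}|\lambda\rangle = \mathcal{G}^n_{\lambda/\mu}(\mathbf{z}^n|v\mathbf{z}^n;q),
\end{equation*}
which is the claimed identity. The second assertion, that $\mathcal{M}^n_{\lambda/\mu}$ vanishes unless $\lambda$ and $\mu$ share an $n$-core, is then inherited immediately from the corresponding vanishing statement at the end of Theorem~\ref{thm:superllt}. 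There is no real obstacle; the only points demanding a moment of care are the bookkeeping that $p_{nk}(\mathbf{z})=p_k(\mathbf{z}^n)$ under the $n$-th power substitution, and the verification that the $J_k$-commutativity legitimately licenses both the combination and the subsequent splitting of exponentials.
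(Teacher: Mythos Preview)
Your proof is correct and follows essentially the same route as the paper's own proof, which simply states that the result follows from Theorems~\ref{thm:eH-equals-T} and~\ref{thm:superllt} because $H_+(z)=\L_+(z^n)-\L_+(vz^n)$. You have filled in the details of this identity carefully, including the passage to several variables via $p_{nk}(\mathbf{z})=p_k(\mathbf{z}^n)$ and the commutativity of the positive $J_k$.
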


\begin{proof}
  This follows from Theorems~\ref{thm:eH-equals-T} and~\ref{thm:superllt} because
  $H_+(z)=\L_+(z^n)-\L_+(vz^n)$.
\end{proof}

We may prove a similar Cauchy identity for the metaplectic symmetric
functions. The following Theorem holds for any values of $g(a)$ satisfying Assumption~\ref{assumptiong(a)}.

\begin{theorem}
  \label{thm:metaplecticcauchy}
  Let $\delta$ be an $n$-core partition. Then
  \begin{equation}
    \label{metacauchy} \sum_{\lambda} \mathcal{M}^n_{\lambda / \delta}
    (\mathbf{z}) \mathcal{M}^n_{\lambda / \delta} (\mathbf{w}) = \Theta
    (\mathbf{z}, \mathbf{w}), \hspace{2em} \Theta (\mathbf{z},
    \mathbf{w}) := \prod_{i, j} \frac{(1 - v z_i^n w^n_j) (1 - v^n
    z_i^n w_j^n)}{(1 - z_i^n w^n_j) (1 - v^{n + 1} z_i^n w_j^n)} .
  \end{equation}
\end{theorem}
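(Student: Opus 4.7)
The plan is to mirror the proof of Theorem~\ref{thm:lamcauchy} (Lam's Cauchy identity for LLT polynomials), replacing $\L_+$ with $H_+$ and computing the resulting scalar commutator to identify it with $\log\Theta$. Concretely, introduce the adjoint operator
\[
 H_+(\mathbf{w})^{\ast} := \sum_{k=1}^{\infty} \frac{1}{k}(1-v^k) w^{nk}\, J_{-k},
\]
(where, as in the discussion preceding Theorem~\ref{thm:lamcauchy}, we assume $\mathbf{w}$ is real and invoke Proposition~\ref{jkadjoint} so that $H_+(\mathbf{w})^\ast$ is the honest Hilbert space adjoint of $H_+(\mathbf{w})$; the final identity is algebraic and so will hold in general). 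Then, exactly as in (\ref{adjiphv}), Corollary~\ref{corllt=smf} gives
\[
 \langle \lambda| e^{H_+(\mathbf{w})^{\ast}} | \delta\rangle = \overline{\langle\delta|e^{H_+(\mathbf{w})}|\lambda\rangle} = \mathcal{M}^n_{\lambda/\delta}(\mathbf{w}).
\]

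Next, compute the scalar commutator using (\ref{jkjmkcomm}):
\[
 [H_+(\mathbf{z}),H_+(\mathbf{w})^{\ast}] = \sum_{k=1}^{\infty} \frac{(1-v^k)^2}{k^2}\,p_{nk}(\mathbf{z})p_{nk}(\mathbf{w})\cdot k\frac{1-v^{nk}}{1-v^k} = \sum_{i,j}\sum_{k=1}^{\infty}\frac{(1-v^k)(1-v^{nk})}{k}(z_i^n w_j^n)^k.
\]
A short expansion of $\log\Theta(\mathbf{z},\mathbf{w})$ in power sums shows that each factor in $\Theta$ contributes $(1 + v^{k(n+1)} - v^k - v^{nk})/k = (1-v^k)(1-v^{nk})/k$ times $(z_i^n w_j^n)^k$, so $[H_+(\mathbf{z}),H_+(\mathbf{w})^\ast] = \log\Theta(\mathbf{z},\mathbf{w})$. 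Since this commutator is a scalar (central), the Baker--Campbell--Hausdorff formula yields
\[
 e^{H_+(\mathbf{z})}\, e^{H_+(\mathbf{w})^{\ast}} = \Theta(\mathbf{z},\mathbf{w})\, e^{H_+(\mathbf{w})^{\ast}} e^{H_+(\mathbf{z})}.
\]

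Finally, evaluate $\langle\delta|\, e^{H_+(\mathbf{z})} e^{H_+(\mathbf{w})^{\ast}}\,|\delta\rangle$ in two ways. Using the lemma preceding Theorem~\ref{thm:lamcauchy}, $J_k|\delta\rangle = 0$ and $\langle\delta|J_{-k}=0$ for $k>0$ since $\delta$ is an $n$-core; therefore $e^{H_+(\mathbf{z})}|\delta\rangle=|\delta\rangle$ and $\langle\delta|e^{H_+(\mathbf{w})^\ast}=\langle\delta|$, so the commutation relation above evaluates the matrix element to $\Theta(\mathbf{z},\mathbf{w})\langle\delta|\delta\rangle=\Theta(\mathbf{z},\mathbf{w})$. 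On the other hand, inserting the orthonormal basis $\{|\lambda\rangle\}$ of $\mathfrak{F}_0$ between the two exponentials gives
\[
 \sum_{\lambda}\langle\delta|e^{H_+(\mathbf{z})}|\lambda\rangle\,\langle\lambda|e^{H_+(\mathbf{w})^{\ast}}|\delta\rangle = \sum_{\lambda}\mathcal{M}^n_{\lambda/\delta}(\mathbf{z})\,\mathcal{M}^n_{\lambda/\delta}(\mathbf{w}),
\]
where by Corollary~\ref{corllt=smf} only partitions with $n$-core $\delta$ contribute. Equating the two evaluations yields (\ref{metacauchy}).

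The only delicate point, and thus the main ``obstacle,'' is verifying the power-sum identity $\log\Theta = \sum_{i,j,k}\frac{(1-v^k)(1-v^{nk})}{k}(z_i^n w_j^n)^k$; once this elementary identity is checked, the rest is a direct adaptation of the Lam Cauchy argument to the operator $e^{H_+}$, with the slight subtlety that the algebraic identity in (\ref{metacauchy}) is first proved under the reality assumption needed to invoke Proposition~\ref{jkadjoint} and then extended by algebraicity, exactly as in Section~\ref{sec:gammaproof} and the proof of Theorem~\ref{thm:lamcauchy}.
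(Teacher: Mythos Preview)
Your proposal is correct and follows essentially the same route as the paper's proof: define $H_+(\mathbf{w})^\ast$, compute $[H_+(\mathbf{z}),H_+(\mathbf{w})^\ast]=\log\Theta(\mathbf{z},\mathbf{w})$ via (\ref{jkjmkcomm}), apply Baker--Campbell--Hausdorff, and evaluate $\langle\delta|e^{H_+(\mathbf{z})}e^{H_+(\mathbf{w})^\ast}|\delta\rangle$ two ways using the $n$-core lemma. One small notational slip: your displayed definition of $H_+(\mathbf{w})^\ast$ has $w^{nk}$ for a single variable, whereas you later (correctly) use $p_{nk}(\mathbf{w})$; make the definition read $\sum_{k\geq 1}\frac{1}{k}(1-v^k)p_{nk}(\mathbf{w})J_{-k}$ to match.
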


\begin{proof}
  This is similar to Theorem~\ref{thm:lamcauchy}. We must first generalize the calculation in
  Proposition~\ref{explltcom}. Now we work with
  \[ H_+ (\mathbf{z}) = \sum_{k = 0}^{\infty} \frac{p_{n k}
     (\mathbf{z})}{k} (1 - v^k) J_k, \hspace{2em} H_+ (\mathbf{z})^{\ast}
     = \sum_{k = 0}^{\infty} \frac{p_{n k} (\mathbf{z})}{k} (1 - v^k) J_{-
     k} . \]
  We see that
  \[ [H_+ (\mathbf{z}), H_+ (\mathbf{w})^{\ast}] = \sum_{k = 0}^{\infty}
     p_{n k} (\mathbf{z}) p_{n k} (\mathbf{w}) \frac{1}{k} (1 - v^k) (1 -
     v^{n k}) \]
  \[ = \sum_{i, j} \sum_{k = 0}^{\infty} \frac{1}{k} (z_i w_j)^{n k} (1 - v^k
     - v^{n k} + v^{(n + 1) k}) = \log \Theta (\mathbf{z}, \mathbf{w}) .
  \]
  Therefore we have
  \[ \exp (H_+ (\mathbf{z})) \exp (H_+ (\mathbf{w})^{\ast}) = \Theta
     (\mathbf{z}, \mathbf{w}) \exp (H_+ (\mathbf{w})^{\ast}) \exp (H_+
     (\mathbf{z})) . \]
  The remainder of the calculation is similar to the proof of Theorem~\ref{thm:lamcauchy}.
\end{proof}

\section{\label{sec:metwhit}Metaplectic Whittaker Functions}

This work originated in the theory of Whittaker functions for the
metaplectic $n$-fold cover of $\GL_r$. These were represented
by Gamma and Delta ice partition functions for finite systems
in \cite{BrubakerBuciumasBump,BBBG}. In this section we will
show that metaplectic Whittaker functions can also be
expressed as partition functions for our infinite-dimensional
systems. More precisely, in (\ref{metaplecticsf}) we defined what
we are calling \textit{metaplectic symmetric functions}.
Like metaplectic Whittaker functions, they are partition
functions of metaplectic ice, but unlike metaplectic Whittaker
functions, the $\mathcal{M}_{\lambda,\mu}^n$ are symmetric functions.
What we will now show is a way of expressing metaplectic Whittaker
functions in terms of the $\mathcal{M}_{\lambda,\mu}^n$.

Let us review the relationship between the metaplectic
ice partition functions and metaplectic Whittaker functions, relying on
\cite{BrubakerBuciumasBump, BBBF, BBBG} for details.  Let $F$ be a
nonarchimedean local field. Assume that the group $\mu_{2n}$ of $2n$-th roots
of unity in $F$ has cardinality $2n$, and that the residue cardinality
$v^{-1}$ is prime to $n$. Let $\varpi$ be a prime element in the ring $\mathfrak{o}$ of
integers and let $\psi$ be a fixed additive character of $F$ that is trivial
on the ring of integers but no larger fractional ideal. Let
\[g(a)=\frac{1}{v^{-1}}\sum_{t \in(\mathfrak{o}/(\varpi))^\times}
  ({\varpi},t)^a\psi\!\left(\frac{t}{\varpi}\right),\]
where $(\;,\;)$ is the $n$-th order Hilbert symbol.
(We are calling the residue cardinality $v^{-1}$ instead
of $v$ or $q$ since it is the reciprocal of the
residue cardinality that will appear in our formulas.
We will use $q$ to denote a square root of $v$.) This
function $g(a)$ satisifies Assumption~\ref{assumptiong(a)}.

There is a central extension
\[1\longrightarrow\mu_{2n}\longrightarrow\widetilde{\GL}_r(F)\longrightarrow\GL_r(F)\longrightarrow1\]
that is essentially an $n$-fold cover, described in \cite{BrubakerBuciumasBump}.
We will refer to this as the \textit{metaplectic} group.

\begin{figure}
\begin{tikzpicture}
  \coordinate (ab) at (1,0);
  \coordinate (ad) at (3,0);
  \coordinate (af) at (5,0);
  \coordinate (ah) at (7,0);
  \coordinate (aj) at (9,0);
  \coordinate (al) at (11,0);
  \coordinate (ba) at (0,1);
  \coordinate (bc) at (2,1);
  \coordinate (be) at (4,1);
  \coordinate (bg) at (6,1);
  \coordinate (bi) at (8,1);
  \coordinate (bk) at (10,1);
  \coordinate (bm) at (12,1);
  \coordinate (baUP) at (0,1.5);
  \coordinate (bcUP) at (2,1.5);
  \coordinate (beUP) at (4,1.5);
  \coordinate (bgUP) at (6,1.5);
  \coordinate (biUP) at (8,1.5);
  \coordinate (bkUP) at (10,1.5);
  \coordinate (bmUP) at (12,1.5);

  \coordinate (cb) at (1,2);
  \coordinate (cd) at (3,2);
  \coordinate (cf) at (5,2);
  \coordinate (ch) at (7,2);
  \coordinate (cj) at (9,2);
  \coordinate (cl) at (11,2);
  \coordinate (da) at (0,3);
  \coordinate (dc) at (2,3);
  \coordinate (de) at (4,3);
  \coordinate (dg) at (6,3);
  \coordinate (di) at (8,3);
  \coordinate (dk) at (10,3);
  \coordinate (dm) at (12,3);
  \coordinate (daUP) at (0,3.5);
  \coordinate (dcUP) at (2,3.5);
  \coordinate (deUP) at (4,3.5);
  \coordinate (dgUP) at (6,3.5);
  \coordinate (diUP) at (8,3.5);
  \coordinate (dkUP) at (10,3.5);
  \coordinate (dmUP) at (12,3.5);

  \coordinate (eb) at (1,4);
  \coordinate (ed) at (3,4);
  \coordinate (ef) at (5,4);
  \coordinate (eh) at (7,4);
  \coordinate (ej) at (9,4);
  \coordinate (el) at (11,4);
  \coordinate (fa) at (0,5);
  \coordinate (fc) at (2,5);
  \coordinate (fe) at (4,5);
  \coordinate (fg) at (6,5);
  \coordinate (fi) at (8,5);
  \coordinate (fk) at (10,5);
  \coordinate (fm) at (12,5);
  \coordinate (faUP) at (0,5.5);
  \coordinate (fcUP) at (2,5.5);
  \coordinate (feUP) at (4,5.5);
  \coordinate (fgUP) at (6,5.5);
  \coordinate (fiUP) at (8,5.5);
  \coordinate (fkUP) at (10,5.5);
  \coordinate (fmUP) at (12,5.5);

  \coordinate (gb) at (1,6);
  \coordinate (gd) at (3,6);
  \coordinate (gf) at (5,6);
  \coordinate (gh) at (7,6);
  \coordinate (gj) at (9,6);
  \coordinate (gl) at (11,6);
  \coordinate (bb) at (1,1);
  \coordinate (bd) at (3,1);
  \coordinate (bf) at (5,1);
  \coordinate (bh) at (7,1);
  \coordinate (bj) at (9,1);
  \coordinate (bl) at (11,1);
  \coordinate (db) at (1,3);
  \coordinate (dd) at (3,3);
  \coordinate (df) at (5,3);
  \coordinate (dh) at (7,3);
  \coordinate (dj) at (9,3);
  \coordinate (dl) at (11,3);
  \coordinate (fb) at (1,5);
  \coordinate (fd) at (3,5);
  \coordinate (ff) at (5,5);
  \coordinate (fh) at (7,5);
  \coordinate (fj) at (9,5);
  \coordinate (fl) at (11,5);
  \draw[thick] (ab)--(gb);
  \draw[thick] (ad)--(gd);
  \draw[thick] (af)--(gf);
  \draw[thick] (ah)--(gh);
  \draw[thick] (aj)--(gj);
  \draw[thick] (al)--(gl);
  \draw[thick] (ba)--(bm);
  \draw[thick] (da)--(dm);
  \draw[thick] (fa)--(fm);
  \foreach \P in {(ab), (ad), (af), (ah), (aj), (al), (ba), (bc), (be), (bg), (bi), (bk), (bm), (cb), (cd), (cf), (ch), (cj), (cl), (da), (dc), (de), (dg), (di), (dk), (dm), (eb), (ed), (ef), (eh), (ej), (el), (fa), (fc), (fe), (fg), (fi), (fk), (fm), (gb), (gd), (gf), (gh), (gj), (gl)}
  {%
  \draw[fill=white, thick] \P circle (.25);
  }%
  \foreach \P in {(bb), (bd), (bf), (bh), (bj), (bl), (db), (dd), (df), (dh), (dj), (dl), (fb), (fd), (ff), (fh), (fj), (fl)}
  {%
  \path[fill=white] \P circle (.32);
  }%
  \node at (bb) {${z_1}$};
  \node at (bd) {${z_1}$};
  \node at (bf) {${z_1}$};
  \node at (bh) {${z_1}$};
  \node at (bj) {${z_1}$};
  \node at (bl) {${z_1}$};
  \node at (db) {${z_2}$};
  \node at (dd) {${z_2}$};
  \node at (df) {${z_2}$};
  \node at (dh) {${z_2}$};
  \node at (dj) {${z_2}$};
  \node at (dl) {${z_2}$};
  \node at (fb) {${z_3}$};
  \node at (fd) {${z_3}$};
  \node at (ff) {${z_3}$};
  \node at (fh) {${z_3}$};
  \node at (fj) {${z_3}$};
  \node at (fl) {${z_3}$};
  \node at (gb) {$-$};
  \node at (gd) {$-$};
  \node at (gf) {$+$};
  \node at (gh) {$+$};
  \node at (gj) {$+$};
  \node at (gl) {$-$};
  \node at (fa) {$+$};
  \node at (fc) {$+$};
  \node at (fe) {$-$};
  \node at (fg) {$-$};
  \node at (fi) {$+$};
  \node at (fk) {$+$};
  \node at (fm) {$-$};
  \node at (eb) {$-$};
  \node at (ed) {$+$};
  \node at (ef) {$+$};
  \node at (eh) {$-$};
  \node at (ej) {$+$};
  \node at (el) {$+$};
  \node at (da) {$+$};
  \node at (dc) {$-$};
  \node at (de) {$-$};
  \node at (dg) {$-$};
  \node at (di) {$-$};
  \node at (dk) {$-$};
  \node at (dm) {$-$};
  \node at (cb) {$+$};
  \node at (cd) {$+$};
  \node at (cf) {$+$};
  \node at (ch) {$-$};
  \node at (cj) {$+$};
  \node at (cl) {$+$};
  \node at (ba) {$+$};
  \node at (bc) {$+$};
  \node at (be) {$+$};
  \node at (bg) {$+$};
  \node at (bi) {$-$};
  \node at (bk) {$-$};
  \node at (bm) {$-$};
  \node at (ab) {$+$};
  \node at (ad) {$+$};
  \node at (af) {$+$};
  \node at (ah) {$+$};
  \node at (aj) {$+$};
  \node at (al) {$+$};
  \node at (11,7) {$0$};
  \node at (9,7) {$1$};
  \node at (7,7) {$2$};
  \node at (5,7) {$3$};
  \node at (3,7) {$4$};
  \node at (1,7) {$5$};
  \node at (baUP) {$0$};
  \node at (bcUP) {$0$};
  \node at (beUP) {$0$};
  \node at (bgUP) {$0$};
  \node at (biUP) {$1$};
  \node at (bkUP) {$0$};
  \node at (bmUP) {$1$};
  
  \node at (daUP) {$0$};
  \node at (dcUP) {$1$};
  \node at (deUP) {$0$};
  \node at (dgUP) {$1$};
  \node at (diUP) {$0$};
  \node at (dkUP) {$1$};
  \node at (dmUP) {$0$};

  \node at (faUP) {$0$};
  \node at (fcUP) {$0$};
  \node at (feUP) {$1$};
  \node at (fgUP) {$0$};
  \node at (fiUP) {$0$};
  \node at (fkUP) {$0$};
  \node at (fmUP) {$1$};

  \node at (-1.5,1) {$1$};
  \node at (-1.5,3) {$2$};
  \node at (-1.5,5) {$3$};
\end{tikzpicture}
\caption{A state of Delta ice. In this example $n=2$. The charges (written
  above the horizontal edges) are integers modulo $n$ that change
  at the $-$ spins in accordance with Table~\ref{tab:mweights}.
  The charges at $+$ edges must be $\equiv 0$ modulo~$n$. For
  Gamma ice, the system is similar, but the rows are numbered increasing
  from top to bottom, and the left edges have variable charge,
  while the right edges all have charge $0$, since in Gamma ice
  $-^a$ is only allowed with $a$ equal to $0$ modulo~$n$.}
\label{stateofmice}
\end{figure}
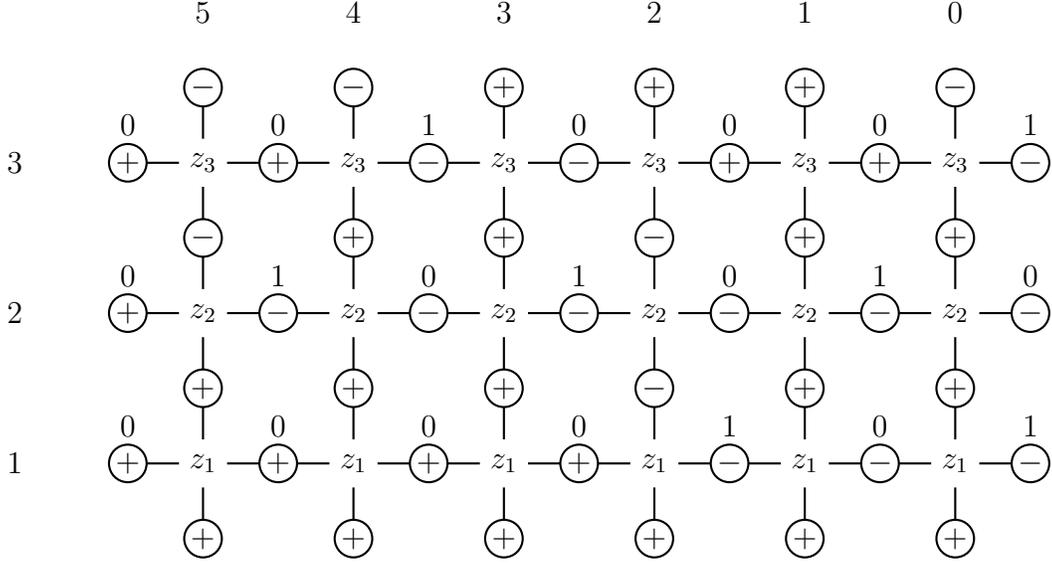

If $\mathbf{z} \in (\mathbb{C}^{\times})^r$ then there is a principal
series representation $\pi_{\mathbf{z}}$ defined in
{\cite{BrubakerBuciumasBump}}. Associated with $\pi_{\mathbf{z}}$ there are
$n^r$ linearly independent {\textit{spherical Whittaker functions}} on
$\widetilde{\GL_r} (F)$. Let $\mathcal{W}_{\mathbf{z}}$ denote the
space of functions spanned by these.
If $W\in\mathcal{W}_{\mathbf{z}}$, we are interested in the values
of $W$ evaluated at
\[\varpi^\lambda:=\mathbf{s}\left(\begin{array}{ccc}\varpi^{\lambda_1}\\&\ddots\\&&\varpi^{\lambda_r}\end{array}\right),\]
where $\mathbf{s}:\GL_r(F)\to\widetilde{\GL}_r(F)$ is a standard
section (see~\cite{BrubakerBuciumasBump}) and $\lambda$ is a
partition of length $\leqslant r$. These are combinatorially
interesting sums of products of Gauss sums and polynomials
in~$v$ whose study goes back to Kazhdan and Patterson~\cite{KazhdanPatterson}.
In~\cite{BrubakerBuciumasBump,BBBG} we showed how to represent
such Whittaker functions in terms of finite systems of Gamma and Delta ice.
In this section we will show that metaplectic Whittaker
functions can also be described as partition functions
of infinite systems, and thereby relate them to the
metaplectic symmetric functions, and to vertex operators.

Let $\lambda = (\lambda_1, \cdots, \lambda_r)$ be a partition of length
$\leqslant r$, let $\mathbf{z}=(z_1, \cdots, z_r)\in(\mathbb{C}^\times)^r$, and let
$\sigma = (\sigma_1, \cdots , \sigma_r)\in (\mathbb{Z}/n\mathbb{Z})^r$.
We will now describe the finite systems
$\mathfrak{S}_{\lambda,\sigma}^\Gamma$ and
$\mathfrak{S}_{\lambda,\sigma}^\Delta$ depending on these data.
These were considered previously in~\cite{BrubakerBuciumasBump} (Gamma ice only)
and in~\cite{BBBG} (both systems).

Let $\rho=(r-1,r-2,\cdots,0)$ so that
\[ \lambda + \rho = (\lambda_1 + r - 1, \lambda_2 + r - 2, \cdots, \lambda_r)
\]
is a strict partition. We consider a grid with $r$ rows and $N$ columns, where $N$ is any positive
integer such that $N \geqslant \lambda_1 + r - 1$. The columns are labeled
$N, N - 1, \ldots, 0$ in decreasing order from left to right and the rows are
labeled $1, \ldots, r$ from the top down for Delta ice and from the bottom up
for Gamma ice.

On the vertical edges along the top boundary, we put $-$ in the $k$-th column
if $k$ is an entry in $\lambda + \rho$; otherwise we put $+$. On the vertical
edges at the bottom, we put $+$ in every column. On the
horizontal edges along the left boundary we put the decorated spin $+^0$ (Delta ice)
or $+^{\sigma_i}$ in the $i$-th row (Gamma ice).  On the horizontal edges on
the right boundary we put the decorated spin $-^{\sigma_i}$ in the $i$-th column
(Delta ice) or $-^0$ (Gamma ice). We use the Gamma Boltzmann weights
in the $i$-th row for Gamma ice and Delta Boltzmann weights for Delta ice.
See Figure~\ref{stateofmice} for an example of the system $\mathfrak{S}_{\lambda,\sigma}^\Delta$.

Let $\delta$ denote the modular quasicharacter of the Borel subgroup on
$\GL_r(F)$, lifted to a function on $\widetilde{\GL}_r(F)$.

\begin{proposition}
  \label{thm:whitaspf}
  Let $\sigma \in (\mathbb{Z}/ n\mathbb{Z})^r$ and
  $\mathbf{z} \in (\mathbb{C}^{\times})^r$. Then there exists a spherical Whittaker function
  $W^\Delta_{\sigma} \in \mathcal{W}_{\mathbf{z}}$ such that for $\lambda$ a
  partition of length $\leqslant r$, we have
  \begin{equation}
  \label{pfaswhit}
   Z (\mathfrak{S}_{\lambda, \sigma}^{\Delta}) = \delta^{- 1 / 2}
     (\varpi^{\lambda}) W^{\Delta}_{\sigma} (\varpi^{\lambda})\;.
  \end{equation}
\end{proposition}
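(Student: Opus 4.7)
The plan is to identify $W^\Delta_\sigma$ with a specific element of a natural basis of $\mathcal{W}_{\mathbf{z}}$ described in \cite{BrubakerBuciumasBump,BBBG}, and to invoke the identification established there between partition functions of finite metaplectic ice and values of metaplectic Whittaker functions on $\{\varpi^\lambda\}$. The first step is to check that the Boltzmann weights for Delta ice in Table~\ref{tab:mweights} agree with those used in \cite{BBBG}; the caption to that table records that only the Gamma ice weights have been rescaled by a factor of $z$ relative to those references, so for Delta ice no adjustment is required. One also verifies that the partition function is independent of the auxiliary parameter $N$: enlarging $N$ appends vertices forced into the configuration $\tt{a}_1$ of Boltzmann weight $1$.

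Next, I would recall that $\mathcal{W}_{\mathbf{z}}$ is $n^r$-dimensional, with a natural basis indexed by $\sigma \in (\mathbb{Z}/n\mathbb{Z})^r$ arising from the action of the $n$-torsion in the torus on Iwahori-fixed vectors. The existence of $W^\Delta_\sigma$ satisfying (\ref{pfaswhit}) is then the content of the main theorem of \cite{BBBG}, specialized to Delta ice boundary conditions. Uniqueness follows from sphericality: by the Iwasawa decomposition, a spherical Whittaker function on $\widetilde{\GL}_r(F)$ is determined by its values on $\{\varpi^\lambda\}$ as $\lambda$ ranges over partitions of length $\leqslant r$, so specifying (\ref{pfaswhit}) for all such $\lambda$ pins $W^\Delta_\sigma$ down as a function.

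The substantive content, which is carried out in the references and which I would simply invoke here, is to match a Casselman--Shalika-type combinatorial formula for metaplectic Whittaker values---a sum of products of Gauss sums indexed by Gelfand--Tsetlin patterns or equivalent combinatorial data---with the enumeration of admissible states of $\mathfrak{S}^\Delta_{\lambda,\sigma}$. The role of $\sigma$ as a decoration of the right-hand horizontal edges encodes the choice of Iwahori--Whittaker vector within the principal series $\pi_{\mathbf{z}}$, while the prefactor $\delta^{-1/2}(\varpi^\lambda)$ is the standard $\rho$-shift normalization appearing in Casselman--Shalika. The main obstacle, addressed in \cite{BBBG}, is verifying that the recursion satisfied by the partition function under exchange of adjacent row parameters (a consequence of the Yang--Baxter equation, via the train argument) matches the intertwiner relations that characterize this basis of $\mathcal{W}_{\mathbf{z}}$; in the present paper we take this identification as given and use it only as a bridge between finite metaplectic ice and the infinite-row framework developed above.
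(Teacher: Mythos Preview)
Your proposal is in the right spirit—this proposition is indeed a citation result, and you correctly point to \cite{BrubakerBuciumasBump,BBBG}—but your attribution is imprecise in a way that matters. You claim the existence of $W^\Delta_\sigma$ is ``the content of the main theorem of \cite{BBBG}, specialized to Delta ice boundary conditions,'' but neither reference states the Delta ice identification directly. The paper's proof instead proceeds in two steps: first, Theorem~6.3 of \cite{BrubakerBuciumasBump} gives the Gamma ice version, namely the existence of $W^\Gamma_\tau\in\mathcal{W}_{\mathbf{z}}$ with $Z(\mathfrak{S}^\Gamma_{\lambda,\tau})=\delta^{-1/2}(\varpi^\lambda)W^\Gamma_\tau(\varpi^\lambda)$; second, Theorem~2.3 of \cite{BBBG} relates $\sum_\sigma Z(\mathfrak{S}^\Delta_{\lambda,\sigma})$ to $\sum_\tau Z(\mathfrak{S}^\Gamma_{\lambda,\tau})$, and the paper observes that the argument there refines to give, for each fixed $\sigma$, constants $S_{\sigma,\tau}(\mathbf{z})$ independent of $\lambda$ with $Z(\mathfrak{S}^\Delta_{\lambda,\sigma})=\sum_\tau S_{\sigma,\tau}(\mathbf{z})\,Z(\mathfrak{S}^\Gamma_{\lambda,\tau})$. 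One then sets $W^\Delta_\sigma=\sum_\tau S_{\sigma,\tau}W^\Gamma_\tau$.

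So the paper's route is: known Gamma identification $\Rightarrow$ Delta-to-Gamma transfer with $\lambda$-independent coefficients $\Rightarrow$ Delta identification. Your route tries to skip the middle by invoking a Delta result that is not, as far as the paper indicates, stated in the literature; the Yang--Baxter/intertwiner matching you describe is the mechanism behind the Gamma case in \cite{BrubakerBuciumasBump}, not a separately proved Delta case. Your remarks on $N$-independence and the unchanged Delta weights are fine, and your uniqueness observation is correct but not needed for the proposition, which asserts only existence.
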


\begin{proof}
  By Theorem~6.3 of {\cite{BrubakerBuciumasBump}}, there exists a spherical
  Whittaker function $W_{\sigma}^{\Gamma}$ such that
  \begin{equation}
    \label{gapartwhit} Z (\mathfrak{S}_{\lambda, \sigma}^{\Gamma}) = \delta^{-
    1 / 2} (\varpi^{\lambda}) W^{\Gamma}_{\sigma} (\varpi^{\lambda}) .
  \end{equation}
  Here we have absorbed the factor $\mathbf{z}^{w_0 \rho + \gamma}$ that appears
  in that theorem into the Whittaker function. It is
  proved in Theorem~2.3 of {\cite{BBBG}} that
  \[ \sum_{\sigma \in (\mathbb{Z}/ n\mathbb{Z})^r} Z (\mathfrak{S}_{\lambda,
     \sigma}^{\Delta}) = (z_1 \cdots z_r)^N \sum_{\tau \in (\mathbb{Z}/
     n\mathbb{Z})^r} Z (\mathfrak{S}_{\lambda, \tau}^{\Gamma}) . \]
  The constant $(z_1 \cdots z_r)^N$ does not appear in {\cite{BBBG}}, but
  this is because in this paper we have changed the Boltzmann weights for
  Gamma ice, in order that the partition function be convergent for infinite
  grids. The arguments there are easily refined to show that, for suitable
  constants $S_{\sigma, \tau} (\mathbf{z})$ independent of $\lambda$, we have
  \[ Z (\mathfrak{S}_{\lambda, \sigma}^{\Delta}) = \sum_{\tau \in
     (\mathbb{Z}/ n\mathbb{Z})^r} S_{\sigma, \tau} (\mathbf{z}) Z
     (\mathfrak{S}_{\lambda, \tau}^{\Gamma}) . \]
  Substituting (\ref{gapartwhit}) into this identity gives us (\ref{pfaswhit})
  with $W_\sigma^\Delta=\sum_\tau S_{\sigma,\tau} W_\tau^\Gamma$.
\end{proof}

Now we wish to relate the partition functions of these finite
systems to the infinite systems defined in Section~\ref{deltaicefockspace}.
Let us choose a partition $\xi$ whose first part $\xi_1\leqslant r-1$.
Now if $\lambda$ is a partition of
length $\leqslant r$ let $\lambda \star \xi$ denote the partition
$\lambda' = (\lambda'_1, \lambda'_2, \cdots)$ where
\[ \lambda'_j = \left\{ \begin{array}{ll}
     \lambda_j + r - 1 & \text{if $j \leqslant r$},\\
     \xi_{j - r} & \text{if $j > r$} .
   \end{array} \right. \]
Note that since we have assumed that $\xi_1\leqslant r-1$ these
entries are weakly decreasing, so $\lambda \star \xi$ is a partition.
In Frobenius notation,
\[\lambda\star\xi=\left(\begin{array}{ccccc}\lambda_1&\lambda_2&\lambda_3&\cdots&\lambda_r\\
\xi'_1+1&\xi'_2+1&\xi'_3+1&\cdots&\xi'_r+1\end{array}\right),\]
where $\xi'$ is the conjugate partition of~$\xi$.

\begin{proposition}
  \label{whitfrommsf}
  Let $\xi$ be a partition such that $\xi_1\leqslant r-1$ 
  and let $\sigma\in(\mathbb{Z}/n\mathbb{Z})^r$.
  Then there exist constants $c(\xi,\sigma; \mathbf{z})$ depending
  on $\xi$ and $\sigma$ such that if
  $\lambda=(\lambda_1,\cdots,\lambda_r)$ is a partition of length $\leqslant r$, then
  \begin{equation}
    \label{eq:whittaker-partition}
    \langle0|T_{\mathbf{z}}|\lambda\star\xi\rangle 
    = \sum_{\sigma\in(\Z/n\Z)^r}
    c(\xi,\sigma;\mathbf{z})\,Z(\mathfrak{S}_{\lambda,\sigma}^\Delta).
  \end{equation}
  Here in the notation (\ref{partitionnot}) both vectors $|0\rangle=|0;0\rangle$
  and $|\lambda\star\xi\rangle=|\lambda\star\xi;0\rangle$ are
  in~$\mathfrak{F}_0$.
\end{proposition}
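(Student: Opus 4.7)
The strategy is to factor the infinite partition function on the left-hand side of \eqref{eq:whittaker-partition} by cutting the grid along a vertical line that separates the $\lambda$-data from the $\xi$-data, in direct analogy with the decomposition used in the proof of Proposition~\ref{prop:rho-T}. The hypothesis $\xi_1\leqslant r-1$ is precisely what makes such a separation possible: the first $r$ indices of the semi-infinite wedge $|\lambda\star\xi\rangle$ form the strict partition $\lambda+\rho$, all of whose entries lie in $\mathbb{Z}_{\geqslant 0}$, while the remaining indices coming from $\xi$ are all strictly negative. Meanwhile the bottom boundary $|0\rangle$ has its $-$ spins exactly at columns $\leqslant 0$. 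Thus all nontrivial activity involving $\lambda$ is confined to columns $\geqslant 0$ while the $\xi$-dependent activity sits in columns $\leqslant -1$.

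I would cut the grid between columns $0$ and $-1$. For each row $i$ the horizontal edge crossing the cut carries a decorated spin $\tau_i^{a_i}$, and since Boltzmann weights factor across the cut, an admissible state of the whole system is the same datum as a pair of compatible admissible states on the two pieces with matching cut decorations. Summing over all such decorations $\tau=(\tau_1^{a_1},\ldots,\tau_r^{a_r})$ yields the factorization
\[
\langle 0|T_\Delta(\mathbf{z})|\lambda\star\xi\rangle = \sum_\tau Z^{L}_\tau(\lambda;\mathbf{z})\,Z^{R}_\tau(\xi;\mathbf{z}),
\]
where $Z^{L}_\tau$ is the partition function of the finite left piece (columns $0$ through a sufficiently large $N\geqslant\lambda_1+r-1$) and $Z^{R}_\tau$ is that of the infinite right piece. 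By construction $Z^{L}_\tau$ depends only on $\lambda$, $\tau$, $\mathbf{z}$, while $Z^{R}_\tau$ depends only on $\xi$, $\tau$, $\mathbf{z}$.

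I would next express $Z^{L}_\tau(\lambda;\mathbf{z})$ as a $\lambda$-independent linear combination $\sum_\sigma A_{\tau,\sigma}(\mathbf{z})\,Z(\mathfrak{S}^{\Delta}_{\lambda,\sigma})$ of the finite ice partition functions, and then set $c(\xi,\sigma;\mathbf{z}):=\sum_\tau A_{\tau,\sigma}(\mathbf{z})\,Z^{R}_\tau(\xi;\mathbf{z})$, which yields~\eqref{eq:whittaker-partition}. Such a linear relation should exist because the left piece and $\mathfrak{S}^{\Delta}_{\lambda,\tau}$ share top, left, and right boundary conditions (after identifying the cut decoration $\tau$ with the right boundary of $\mathfrak{S}^{\Delta}_{\lambda,\tau}$), and differ only at the bottom of column~$0$: the left piece inherits a $-$ spin there from the vacuum $|0\rangle$, whereas $\mathfrak{S}^{\Delta}_{\lambda,\sigma}$ carries a $+$. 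This local discrepancy, together with the need to absorb cut decorations of type $+^0$ into the available $-^{a}$-type right boundaries of the $\mathfrak{S}^\Delta_{\lambda,\sigma}$, can be handled by a change of basis that uses only the Boltzmann weights in Table~\ref{tab:mweights} at the single vertex in question, and hence does not depend on~$\lambda$.

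The main obstacle is rigorously establishing that the matrix $A_{\tau,\sigma}(\mathbf{z})$ is genuinely $\lambda$-independent, since the admissible local configurations at the bottom of column~$0$ depend on whether $\lambda_r=0$ or $\lambda_r\geqslant 1$. A cleaner alternative would be to carry out the cutting argument in the shifted Fock space $\mathfrak{F}_{-r}$: in the wedge decomposition $|\lambda\star\xi\rangle = (u_{\lambda_1+r-1}\wedge\cdots\wedge u_{\lambda_r})\wedge|\xi;-r\rangle$, the vector $|\xi;-r\rangle$ plays the role of a modified vacuum and the finite wedge directly encodes the $\lambda+\rho$ data of $\mathfrak{S}^{\Delta}_{\lambda,\sigma}$. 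This parallels the strategy underlying Proposition~\ref{prop:rho-T}, where cutting between a finite relevant piece and an infinite piece produces a $\lambda$-independent constant~$C$ depending only on the decorations at the cut.
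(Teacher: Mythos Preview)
Your strategy---cut the infinite grid between columns $0$ and $-1$ and factor over the decorated spins at the cut---is the paper's approach. But you miss the paper's key step and are left with precisely the gap you flag as the ``main obstacle.''

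The paper's central idea is a parity-counting argument. For each intermediate configuration $\zeta\in\mathfrak{F}_0$ in the $r$-row system, let $N(\zeta)$ count the minus spins sitting in columns $\geqslant 0$. The interleaving condition of Lemma~\ref{interleavinglemma} forces $N$ to decrease by $0$ or $1$ across each row of Delta ice, and the horizontal spin to the right of column $0$ in a given row is $-$ exactly when $N$ drops there (every vertex has an even number of $-$ edges). The hypothesis $\xi_1\leqslant r-1$ ensures that the first $r$ wedge indices of $|\lambda\star\xi\rangle$ lie in $\{0,1,\ldots\}$ while all later ones are negative, so $N(|\lambda\star\xi\rangle)-N(|0\rangle)$ equals the number of rows $r$, and hence $N$ must drop at \emph{every} row. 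Thus every cut edge carries a decorated spin of the form $-^{\sigma_k}$; the $+^0$ decorations you worry about simply never appear.

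This collapses your sum over arbitrary $\tau$ to a sum over charge tuples $\sigma=(\sigma_1,\ldots,\sigma_r)$. For each fixed $\sigma$ the product of Boltzmann weights on the left of the cut is identified with the weight of a state of $\mathfrak{S}^{\Delta}_{\lambda,\sigma}$, while the product on the right depends only on $\xi$, $\sigma$, and $\mathbf{z}$---this is the constant $c(\xi,\sigma;\mathbf{z})$. No change-of-basis matrix $A_{\tau,\sigma}$ is needed.

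Your proposed single-vertex change of basis cannot work as stated: flipping the bottom boundary spin at column $0$ alters the admissible spin on the edge above it, and this then propagates upward through all $r$ rows, so the discrepancy is not local. The alternative of passing to $\mathfrak{F}_{-r}$ does not by itself replace the counting argument either; what makes the cut clean is the global constraint that $N$ is forced to drop at every row, and that is the missing ingredient in your proposal.
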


\begin{proof}
  Let us define an invariant $N:\mathfrak{F}_0\to\mathbb{N}$. Suppose $\xi=u_{\mathbf{i}}$,
  where $\mathbf{i}$ is as in (\ref{boldij}), with $m=0$. If $0>i_0$ then
  we define $N(\xi)=0$; otherwise, $N(\xi)=t$ where
  $t$ is such that $i_{-t}\geqslant 0> i_{-t-1}$. If $\xi$ is interpreted
  as an assignment of spins to a sequence of vertical edges, then
  $N(\xi)$ is the number of $-$ spins to the left of the $0$-th
  column (inclusively) or equivalently (since $\xi\in\mathfrak{F}_0$),
  the number of $+$ spins strictly to the right of the $0$-th column.

  Consider a state of the infinite system $\mathfrak{S}^\Delta_{\mathbf{z}, |\lambda\star\xi\rangle,|0\rangle,r}$ 
  of Section~\ref{deltaicefockspace}. For $0\leqslant r$ let
  $\mathbf{i}^{(k)}$ be the decreasing sequence such that in the
  notation (\ref{semimon}), $u_{\mathbf{i}^{(k)}}$ is the element of $\mathfrak{F}_0$
  corresponding to the configuration of spins below the $k$-th row,
  and $u_{\mathbf{i}^{(k-1)}}$ is the element corresponding to the
  configuration above it. Thus $u_{\mathbf{i}^{(0)}}=|\lambda\star\xi\rangle$
  and $u_{\mathbf{i}^{(r)}}=|0\rangle.$

  We will show that the spins of the horizontal edges connecting vertices of
  the $0$-th column to those of the $-1$-st column are all $-$. Indeed,
  it follows from Lemma~\ref{interleavinglemma}
  that either $N(u_{\mathbf{i}^{(k+1)}})=N(u_{\mathbf{i}^{(k)}})$
  or $N(u_{\mathbf{i}^{(k+1)}})=N(u_{\mathbf{i}^{(k)}}) - 1$. But
  since $N(u_{\mathbf{i}^{(r)}})=0$ and $N(u_{\mathbf{i}^{(0)}})=N(|\lambda\star\xi\rangle)=r$,
  we must have $N(u_{\mathbf{i}^{(k)}})=k$ for all $k$.
  Now the fact that $N(u_{\mathbf{i}^{(k)}})$ and
  $N(u_{\mathbf{i}^{(k-1)}})$ have opposite parity implies
  that the spin in the $k$-th row on the horizontal edge
  to the right of the $0$-th column is $-$, as required.

  Now to complete the proof, we fix spins $\sigma=(\sigma_1,\cdots,\sigma_r)$
  and collect together the states whose decorated spin
  on the edge in the $k$-th row to the right of the $0$-th
  column is $-^{\sigma_k}$. The product of the Boltzmann
  weights to the left of the $0$-th column is
  the Boltzmann weight of a state of $\mathfrak{S}_{\lambda,\sigma}^\Delta$,
  and so clearly the sum of such Boltzmann weights equals
  $Z(\mathfrak{S}_{\lambda,\sigma}^\Delta)$ times a factor
  that is independent of $\lambda$.
\end{proof}

Let us reformulate this result as expressing a metaplectic Whittaker function
in terms of the metaplectic symmetric functions.

\begin{theorem}
  \label{whitfrommw}
  Let $\xi$ be a partition (of any length) such that $\xi_1\leqslant r-1$.
  Then
  \[ \sum_{\sigma\in(\mathbb{Z}/n\mathbb{Z})^r}
  c(\xi,\sigma;\mathbf{z})\,W_{\sigma}^{\Delta} (\varpi^{\lambda}) =
  \delta^{1 / 2} (\varpi^{\lambda}) \mathcal{M}^n_{\lambda \star \xi} (\mathbf{z}) . \]
\end{theorem}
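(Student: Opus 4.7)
The plan is to observe that Theorem~\ref{whitfrommw} follows by directly combining the two immediately preceding propositions with the definition of the metaplectic symmetric function in (\ref{metaplecticsf}). First I would rewrite the left-hand side of (\ref{eq:whittaker-partition}) in Proposition~\ref{whitfrommsf}. Since $T_{\mathbf{z}}=T_\Delta(z_1)\cdots T_\Delta(z_r)$ and $|0\rangle=|\varnothing;0\rangle$ corresponds to the empty partition in $\mathfrak{F}_0$, the matrix coefficient $\langle 0|T_{\mathbf{z}}|\lambda\star\xi\rangle$ is precisely $\mathcal{M}^n_{\lambda\star\xi}(\mathbf{z})$ as defined in (\ref{metaplecticsf}). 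Hence Proposition~\ref{whitfrommsf} yields
\[
\mathcal{M}^n_{\lambda\star\xi}(\mathbf{z}) = \sum_{\sigma\in(\Z/n\Z)^r} c(\xi,\sigma;\mathbf{z})\,Z(\mathfrak{S}^\Delta_{\lambda,\sigma}).
\]

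Next, I would substitute the identity from Proposition~\ref{thm:whitaspf}, namely $Z(\mathfrak{S}^\Delta_{\lambda,\sigma}) = \delta^{-1/2}(\varpi^\lambda)\,W^\Delta_\sigma(\varpi^\lambda)$, into the right-hand side. Pulling out the factor $\delta^{-1/2}(\varpi^\lambda)$, which depends only on $\lambda$ and not on $\sigma$, gives
\[
\mathcal{M}^n_{\lambda\star\xi}(\mathbf{z}) = \delta^{-1/2}(\varpi^\lambda)\sum_{\sigma\in(\Z/n\Z)^r} c(\xi,\sigma;\mathbf{z})\,W^\Delta_\sigma(\varpi^\lambda).
\]
Multiplying both sides by $\delta^{1/2}(\varpi^\lambda)$ yields exactly the stated identity.

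There is essentially no obstacle beyond checking that the conventions line up: that the empty partition in $\mathfrak{F}_0$ really does give the vacuum $|0\rangle$ used in Proposition~\ref{whitfrommsf}, and that the constants $c(\xi,\sigma;\mathbf{z})$ extracted in that proposition are the same ones appearing in the theorem. Both are immediate from the construction, so the theorem is a direct corollary of the combination of Propositions~\ref{thm:whitaspf} and~\ref{whitfrommsf} together with the definition of $\mathcal{M}^n_{\lambda\star\xi}$. The substantive work has already been done in those two propositions (in particular, in the argument of Proposition~\ref{whitfrommsf} that the horizontal edges to the right of the $0$-th column are forced to carry a $-$ spin, which is what allows the infinite system to decompose into a finite system $\mathfrak{S}^\Delta_{\lambda,\sigma}$ times a $\lambda$-independent tail).
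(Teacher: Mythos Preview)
Your proof is correct and follows exactly the same route as the paper: combine Proposition~\ref{whitfrommsf} with Proposition~\ref{thm:whitaspf} and the definition~(\ref{metaplecticsf}) of $\mathcal{M}^n_{\lambda\star\xi}$. There is nothing to add.
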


\begin{proof}
This follows from combining Proposition~\ref{whitfrommsf} with
Proposition~\ref{thm:whitaspf}
and the definition (\ref{metaplecticsf}) of the metaplectic symmetric function
$\mathcal{M}^n_{\lambda}$.
\end{proof}

\begin{remark}
By Corollary~\ref{corllt=smf}, this particular Whittaker function
vanishes at $\varpi^\lambda$ unless $\lambda\star\xi$ has empty $n$-core.
Although $\mathcal{M}^n_{\lambda\star\xi}$ is a symmetric function,
this does not imply that the Whittaker function is symmetric
in $\mathbf{z}$ because of the factor $c(\xi,\sigma;\mathbf{z})$.
These coefficients may be of interest for their own sake.
\end{remark}

\begin{remark}
It seems probable that the Whittaker functions on the left-hand side (varying
$\xi$) span the space of Whittaker functions. Such a result would give a
two-way connection between metaplectic Whittaker functions and
metaplectic symmetric functions.
\end{remark}

\section{Vertex Operators\label{sec:vertexops}}
So far we have put a lot of focus on operators of the form
either:
\begin{equation}
    \label{eq:Vpm}
    V_+ (z) = \exp (H_+ [a] (z))  \hspace{2em} \text{or} \hspace{2em} V_- (z) =
   \exp (H_- [a] (z)),
\end{equation}
which we call \textit{half-vertex operators}, 
where $H_+ [a] (z)$ and $H_- [a] (z)$ are formal power series in $z$ and $z^{-
1}$ respectively defined by (\ref{hplusadef}) and (\ref{hminadef}). Recall
that $H_+$ involves the right-moving operators $J_k$ and $H_-$ involves the
left-moving operators $J_{- k}$ with $k > 0$. We have proved that the
operators $T_{\Delta} (z)$ and $T_{\Gamma} (z)$ are of this type.

In this section we will consider their products $V_- (z) V_+ (z)$, such as the operator $T_{\Gamma} (z) T_{\Delta} (z)$, and investigate if they satisfy the properties of a \emph{vertex operator}. There
is one reason to believe that $T_{\Gamma} (z) T_{\Delta} (z)$ is a natural
entity: in symplectic ice ({\cite{Ivanov,BBCG,GrayThesis}}) one represents the
Whittaker function on the $n$-fold cover of $\Sp (2 r)$ with Langlands
parameters $z_1, z_1^{- 1}, \cdots, z_r, z_r^{- 1}$ by the partition function
of a system having alternating layers of Gamma and Delta ice. The two adjacent
layers are joined by a ``cap'' vertex which does not have an obvious analog in
our current setup.

Gamma and Delta ice occur together in another context, namely the equality of
the partition functions for Gamma and Delta ice. In~{\cite{BBBG}} this result
(established earlier with greater difficulty in {\cite{wmd5book}}) is proved
using Yang-Baxter equations. In that context $T_{\Gamma} (z) T_{\Delta} (w)$
only appear there with $z$ and $w$ distinct. For this, our
Theorem~\ref{thm:gdcommute} below is relevant, taking the place of the
Yang-Baxter equations in our current setup.
Still, in this section we are mainly interested in $V (z) = T_{\Gamma} (z)
T_{\Delta} (z)$ with the parameters equal.

Vertex operators exhibit a property called {\textit{locality}}. This is a
generalization of commutativity that was emphasized in
{\cite{DongLepowskyBook,KacBeginners,FrenkelBenZvi}}.
It is explained in Chapter~1 of \cite{KacBeginners} that for
the vertex operators arising in conformal field theory, locality is a
reflection of the locality in the Wightman axioms for a quantum field theory:
two fields with disjoint support having spacelike separation commute as
operators.

If $F$ is a field, let $F[[z]]$ be the ring of formal power series
$\sum_{n\geqslant 0}a_nz^n$ with $a_n\in F$, and let $F((z))$ be the
fraction field of $F[[z]]$, consisting of Laurent series $\sum_{n=-N}^\infty a_nz^n$
with only finitely many negative coefficients. Let $\HS((z))$ denote
$\C((z))\otimes\HS$, the space of Laurent series with coefficients
in~$\HS$.

In vertex algebras a \emph{field} is represented by a formal power series
\begin{equation*}
    A(z) = \sum_{k =-\infty}^{\infty} A_k z^{-k-1}
\end{equation*}
where $A_k$ is an operator on a Hilbert space $\HS$ such that for any vector
$|v\rangle \in \HS$, $A_k |v\rangle = 0$ for $k \gg 0$. A field gives
rise to a map $\HS\to\HS((z))$.

Let $B(w) = \sum_{k =-\infty}^{\infty} B_k w^{-k-1}$ similarly be a field.
Locality is a generalization of commutativity in the sense that two fields
$A(z)$ and $B(w)$ are called \emph{mutually local} if $[A(z), B(w)] = A (z) B
(w) - B (w) A (z)$ is a formal distribution concentrated on the diagonal
$z = w$. We will explain more precisely what this means.

Note that the matrix elements of $A(z)B(w)$ are elements in $\cmplx((z))((w))$,
that is, in $F((w))$ where $F = \cmplx((z))$. Similarly the
matrix elements of $B(w)A(z)$ are elements in $\cmplx((w))((z))$. The
difference between $\cmplx((z))((w))$ and $\cmplx((w))((z))$ is illustrated by
image of the rational function $1/(z-w)$ embedded into the two spaces as
$z^{-1} \sum_{k=0}^\infty (w/z)^k$ and $-w^{-1} \sum_{k=0}^\infty (z/w)^k$
respectively. Requiring that the matrix elements of $[A(z), B(w)]$ should
vanish identically would restrict us to elements in the intersection of
$\cmplx((z))((w))$ and $\cmplx((w))((z))$ in $\cmplx[[z^\pm, w^\pm]]$ which is
the space $\cmplx[[z,w]][z^{-1}, w^{-1}]$ giving a too strong
condition \cite{FrenkelBenZvi}. Instead, we use the more relaxed condition
that
\begin{equation}
  \label{mutlocal}
  (z-w)^N [A(z), B(w)] = 0
\end{equation}
as a formal power series for some positive integer $N$. In this case we say
the fields $A(z)$ and $B(w)$ are \textit{mutually local}.

Let us give another explanation of this notion. We assume that $A_k$ and
$B_l$ commute if $k$ and $l$ are either both positive or both negative, that
$A_0$ and $B_0$ commute with all $A_k$ and $B_l$. Moreover let us assume that
the normal-ordered product
\[ {: \mathrel{A (z) B (w)} :} = \sum_{k = - \infty}^{\infty} \sum_{l = -
   \infty}^{\infty} z^{- k - 1} w^{- l - 1} {:\mathrel{A_k B_l}:} \]
is a bounded operator on $\HS$, where
\[ {:\mathrel{A_k B_l}:} = \left\{ \begin{array}{ll}
     B_l A_k & \text{if $k > 0$},\\
     A_k B_l & \text{otherwise} .
   \end{array} \right. \]
Our assumptions imply that ${: \mathrel{A (z) B (w)} :} = {: \mathrel{B (w) A (z)} :}$. Now
consider:
\[ A (z) B (w) - {: \mathrel{A (z) B (w)} :}. \]
Very often this operator will be given by a power series that is convergent
when $| w | < | z |$. Let us denote this as $\phi (z, w)$. Furthermore it may
be that $B (w) A (z) - {: \mathrel{A (z) B (w)} :}$ is also given by a power series,
convergent when $| z | < | w |$, and that this represents the same rational
function $\phi (z, w)$. In this case the fields $A (z)$ and $B(w)$ are mutually local.

To clarify this with an example, let us work with a Heisenberg Lie algebra
having generators $J_k$ ($k \in \mathbb{Z}$) with $J_0$ central having the
commutator relations
\[ [J_k, J_l] = \delta_{k, - l} k \cdot c \]
where $c$ is another central element. This is the special case $n = 1$ of
(\ref{jkjmkcomm}). The Hilbert space $\HS$ is to be generated by a
vacuum $|0 \rangle$ such that $J_k |0 \rangle = 0$ if $k > 0$ and $c$ acts by
the identity. Now consider the field
\[ J (z) = \sum_{k = - \infty}^{\infty} z^{- k - 1} J_k . \]
We have
\[ J (z) J (w) - {:\mathrel{J (z) J (w)}:} = \sum_{k = 1}^{\infty} [J_k, J_{- k}] z^{-
   1 - k} w^{- 1 + l} \cdot c = \frac{1}{(z - w)^2} \cdot c, \]
the series being convergent when $| w | < | z |$. Since $J (w) J (z) - {:\mathrel{J
(w) J (z)}:}$ gives the same expression in the complementary domain $| z | < |
w |$, the fields $J (z)$ and $J (w)$ are mutually local.

Thus the locality is a generalization of the condition that
$A (z) B (w) = B(w) A (z)$. Dong and Lepowsky~{\cite{DongLepowskyBook}} considered a similar
generalization of the condition that
\begin{equation}
    \label{phasecommute}
  A (z) B (w) = e^{i \pi \tau} B (w) A (z),
\end{equation}
for a phase shift $e^{i \pi \tau}$. Our Proposition~\ref{ulocalcr} below
shows that we need such a generalization of locality. Frenkel and
Reshetikhin~{\cite{FrenkelReshetikhinChiral}} considered even more generally the
case where the phase shift is replaced by an operator $S (w / z)$ that depends
analytically only on $z$ and $w.$ 
For consistency it is necessary that $S (w / z)$ satisfies a parametrized
Yang-Baxter equation. This is automatic if $S (w / z)$ is a scalar, in which
case this identity is similar to (\ref{phasecommute}).

There is another respect in which the
framework of \cite{FrenkelReshetikhinChiral} is more general than the usual locality, and this is that they allow
$S (w / z)$ to have poles not only on the diagonal $z = w$ but on shifted
diagonals $z = \gamma w$ where $\gamma$ lies in a discrete subgroup of
$\mathbb{C}^{\times}$. This concept of locality in {\cite{FrenkelReshetikhinChiral}} is what we see
in our examples with the set of lines $z = v^j w$ and $S(w/z)$ being a scalar.

We require that $S$ is a
meromorphic function, with poles only along the lines $w = v^j z$ for a finite
number of integer values of $j$, such that
\begin{equation}
  \label{generalcommute}
  A (z) B (w) = S (w / z) B (w) A (z) .
\end{equation}
Let us first consider the meaning of this when $A (z) = B (z) = V (z) = V_-
(z) V_+ (z)$ where $V_\pm(z)$ are defined in \eqref{eq:Vpm}. Suppose that we can find a rational function $\phi (x)$ such
that (formally)
\[ V_+ (z) V_- (w) = \phi \left( \frac{z}{w} \right) V_- (w) V_+ (z) . \]
Then since $V_- (z)$ commutes with $V_- (w)$ and $V_+ (z)$ commutes with $V_+
(w)$ we have
\[ V (z) V (w) = \phi \left( \frac{z}{w} \right) : \mathrel{V (z) V (w)} : \]
where the normal-ordered product is
\begin{equation}
   \label{novzvw}
  {: \mathrel{V (z) V (w)} :} = V_- (z) V_- (w) V_+ (z) V_+ (w) .
\end{equation}
Then
\begin{equation}
    \label{generalcommute-phi}
    V (z) V (w) = S (w / z) V (w) V (z), \hspace{2em} S (x) = \phi (x^{-1}) / \phi(x)
\end{equation} 

\begin{remark}
\label{noadvantages}
Strictly speaking $V_- (z) V_+ (z)$ is not an operator on $\mathfrak{F}_m$
since $V_- (z) | \lambda \rangle$ produces an infinite number of terms.
However $\langle \mu|V_- (z) V_+ (z) | \lambda \rangle$ is a finite sum.
Moreover, the normal-ordered product $:\mathrel{V(z)V(w)}:$ defined by
(\ref{novzvw}) is such that $\langle\mu|:\mathrel{V(z)V(w)}:|\lambda\rangle$
is a finite sum. The normal-ordered product has the advantage of being unchanged if $z$ and $w$ are switched.
\end{remark}

We now take the Heisenberg generators $J_k$ to satisfy the commutator relation
(\ref{jkjmkcomm}), and $\mathcal{H}=\mathfrak{F}_m$ for some fixed $m$.
We supplement the $J_k$ ($k \neq 0$) by $J_0$ which acts on
$\mathfrak{F}_m$ by the scalar $m$. Define the shift operator $Q :
\mathfrak{F}_m \longrightarrow \mathfrak{F}_{m + n}$ by
\[ Q (u_{i_m} \wedge u_{i_{m - 1}} \wedge \cdots) = u_{i_m + n} \wedge u_{i_{m
   - 1} + n} \wedge \cdots . \]
We will use the notation $| \lambda \rangle = | \lambda ; m \rangle$ introduced
in (\ref{partitionnot}) for basis vectors.

We may regard $V_- (z) V_+ (z)$ as a map from $\mathfrak{F}$ into a suitable
completion. Depending on the coefficients $a$ it may be useful to supplement
$V_- (z) V_+ (z)$ by a factor such as $Q^r z^{a_0 J_0}$.

\begin{example}
The first case we wish to consider is $V_+ (z) =
T_{\Delta} (z)$, $V_- (z) = T_{\Gamma} (z)$. The operators $T_{\Delta} (z)$ and $T_{\Delta}(w)$
commute as follows from our main theorem (or by
a Yang-Baxter equation argument). Similarly the $T_{\Gamma} (z)$ mutually
commute for varying~$z$. But $T_{\Delta} (z)$ does not commute with
$T_{\Gamma} (w)$. Moreover, we must be cautious about composing these.
Consider
\begin{equation}
  \label{tdtgwrite} \langle \eta |T_{\Delta} (z) T_{\Gamma} (w) | \xi \rangle
  = \sum_{\zeta} \langle \eta |T_{\Delta} (z) | \zeta \rangle  \langle \zeta
  |T_{\Gamma} (w) | \xi \rangle .
\end{equation}
There are an infinite number of terms on the right-hand, side. The sum
converges provided $|z| < c_1 |w|$ where $c_1 = \min (1, |v|^{- 1 - 1 / n})$. 

\begin{remark}
  \label{noremark}There are no such convergence issues if we compose in the
  other (normal-ordered) way: because $T_{\Gamma} (w) T_{\Delta} (z)$ does the
  right-moving modes first, the sum corresponding to (\ref{tdtgwrite}) is a
  finite sum.
\end{remark}

\begin{theorem}
  \label{thm:gdcommute}Suppose that $|z| < c_1 |w|$. Then
  \begin{equation}
    \label{gamdelcommutator} T_{\Delta} (z) T_{\Gamma} (w) = \frac{(1 - vz^n
    w^{- n})  (1 - v^n z^n w^{- n})}{(1 - z^n w^{- n})  (1 - v^{n + 1} z^n
    w^{- n})} T_{\Gamma} (w) T_{\Delta} (z) .
  \end{equation}
\end{theorem}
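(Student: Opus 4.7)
The plan is to reduce everything to a scalar commutator calculation via the Main Theorem. Since Theorem~\ref{thm:eH-equals-T} identifies $T_\Delta(z) = e^{H_+(z)}$ and $T_\Gamma(w) = e^{H_-(w)}$ with the half-vertex operators built from the Heisenberg generators $J_k$, I would exploit the fact that $[J_k, J_{-k}]$ is a scalar (on each $\mathfrak{F}_m$) by (\ref{jkjmkcomm}), so that $[H_+(z), H_-(w)]$ will be a scalar as well, and then apply the Baker--Campbell--Hausdorff identity $e^A e^B = e^{[A,B]} e^B e^A$ valid when $[A,B]$ is central.

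First I would compute $[H_+(z), H_-(w)]$ explicitly. Using (\ref{jkjmkcomm}) and cross terms $[J_k, J_{-l}] = \delta_{k,l}\, k\,\frac{1-v^{nk}}{1-v^{k}}$ for $k,l>0$, the definition \eqref{hpmdef} yields
\begin{equation*}
    [H_+(z), H_-(w)]
    = \sum_{k=1}^{\infty} \frac{1}{k}(1-v^k)(1-v^{nk}) \bigl(z^n w^{-n}\bigr)^k.
\end{equation*}
Setting $x = z^n w^{-n}$ and factoring $(1-v^k)(1-v^{nk}) = 1 - v^k - v^{nk} + v^{(n+1)k}$ turns the sum into four copies of $-\log(1-\alpha x)$, giving
\begin{equation*}
    [H_+(z), H_-(w)] = \log\!\left(\frac{(1-vx)(1-v^n x)}{(1-x)(1-v^{n+1}x)}\right).
\end{equation*}

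Since this commutator is a scalar, it is central, and the Baker--Campbell--Hausdorff identity applies verbatim:
\begin{equation*}
    T_\Delta(z)\, T_\Gamma(w) = e^{H_+(z)} e^{H_-(w)} = e^{[H_+(z),H_-(w)]}\, e^{H_-(w)} e^{H_+(z)} = \frac{(1-vx)(1-v^n x)}{(1-x)(1-v^{n+1}x)}\, T_\Gamma(w)\, T_\Delta(z),
\end{equation*}
which is exactly \eqref{gamdelcommutator} after substituting back $x = z^n w^{-n}$.

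The only genuine issue to address is analytic: the left-hand side $T_\Delta(z)T_\Gamma(w)$ is given by the infinite sum \eqref{tdtgwrite}, and we must know that sum converges so that the formal BCH manipulation produces a bona fide operator identity rather than just a statement about formal power series. Here I would invoke the hypothesis $|z| < c_1 |w|$ with $c_1 = \min(1, |v|^{-1-1/n})$: this ensures $|x|<1$ and $|v^{n+1}x|<1$ (hence also $|vx|, |v^n x|<1$ in both regimes $|v|\lessgtr 1$), so each of the four logarithmic series defining $[H_+(z), H_-(w)]$ converges absolutely, and the scalar prefactor is the honest meromorphic function shown. The main obstacle is this convergence bookkeeping---the algebraic content is simply a one-line BCH argument, but matching the convergence region to the constant $c_1$ and justifying that the formal rearrangement of (\ref{tdtgwrite}) produces the stated rational function (rather than one of its alternative expansions in $\mathbb{C}((z))((w))$ versus $\mathbb{C}((w))((z))$) is what requires care.
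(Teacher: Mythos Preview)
Your proposal is correct and follows essentially the same approach as the paper's own proof: compute the scalar commutator $[H_+(z),H_-(w)]$ using (\ref{jkjmkcomm}), recognize it as the logarithm of the rational function in (\ref{gamdelcommutator}), apply Baker--Campbell--Hausdorff, and invoke Theorem~\ref{thm:eH-equals-T}. The paper simply abbreviates the commutator calculation by pointing to the analogous computation in the proof of Theorem~\ref{thm:metaplecticcauchy}, whereas you spell it out explicitly; your added convergence discussion is also a bit more detailed than the paper's, but the substance is identical.
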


This, together with Remark~\ref{noremark} allows us to analytically continue
the conditionally convergent composition $T_{\Delta} (z) T_{\Gamma} (w)$,
except to the poles of the denominator in (\ref{gamdelcommutator}).

\begin{proof}[Proof of Theorem~\ref{thm:gdcommute}]
  By a computation very similar to the proof of Theorem~\ref{thm:metaplecticcauchy}, we have
  \begin{equation}
      \label{eq:ex1-phi}
      e^{H_+ (z)} e^{H_- (w)} e^{- H_+ (z)} e^{- H_- (w)} = \frac{(1 - vz^n
     w^{- n})  (1 - v^n z^n w^{- n})}{(1 - z^n w^{- n})  (1 - v^{n + 1} z^n
     w^{- n})},
  \end{equation}
  and (\ref{gamdelcommutator}) follows from our Main Theorem (Theorem~\ref{thm:eH-equals-T}).
\end{proof}

In view of our previous discussion, this means that if we define 
$V (z) = T_{\Gamma} (z) T_{\Delta} (z)$, then the operators $V (z)$, $V (w)$ are
mutually local in the generalized sense of (\ref{generalcommute-phi}) with
$\phi(z/w)$ being the right hand side of \eqref{eq:ex1-phi}.
\end{example}

\begin{example}
For our next example, we work with the operators
\[ L_+ (z) = \sum_{k = 1}^{\infty} \frac{1}{k} z^k J_k, \hspace{2em} L_- (z) =
   \sum_{k = 1}^{\infty} \frac{1}{k} z^{- k} J_{- k} . \]
The operator $L_+ (z)$ appeared in Section~\ref{LLTsection}, and the operator
$L_- (z)$ resembles the operator $L_+ (z)^{\ast}$ that we used there, except
that $z$ is replaced by $z^{- 1}$. Now we will make use of the shift operator,
and $J_0$. Define
\begin{equation}
  U_{\pm} (z) = \exp (L_{\pm} (z)), \hspace{2em} U^{\diamondsuit} (z) = U_-
  (z) U_+ (z)\,, \hspace{2em} U (z) = Q z^{J_0}
  U^{\diamondsuit} (z) .
\end{equation}
Now let us define the normal-ordered product
\[ {: \mathrel{U^{\diamondsuit} (z) U^{\diamondsuit} (w)} :} = U_- (z) U_- (w) U_+ (z)
   U_+ (w) . \]
This is meaningful for all $z$ and $w$ in the sense that if 
${\mu},
\lambda$ are given, then $\langle {\mu}| {: \mathrel{U^{\diamondsuit} (z)
U^{\diamondsuit} (w)} :} | \lambda \rangle$ is always a finite sum.

\begin{proposition}
  \label{diamondnoprop}If $| z | / | w |$ is sufficiently small, then
  \begin{equation}
    \label{udiamondno} U^{\diamondsuit} (z) U^{\diamondsuit} (w) = \prod_{j =
    0}^{n - 1} \frac{1}{1 - v^j z / w} :\mathrel{U^{\diamondsuit} (z) U^{\diamondsuit}
    (w)} : \; .
  \end{equation}
\end{proposition}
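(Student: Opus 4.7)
The plan is to reduce the identity to a direct application of the Baker–Campbell–Hausdorff (BCH) formula, exploiting the fact that $[L_+(z),L_-(w)]$ is a scalar. Concretely, since $L_\pm(z)$ involve only $J_{\pm k}$ with $k\geqslant 1$, the operators $L_+(z)$ and $L_+(w)$ commute with each other (as all the $J_k$, $k>0$, commute among themselves), and similarly $L_-(z)$ and $L_-(w)$ commute. Thus the only nontrivial reordering required to bring $U^{\diamondsuit}(z)U^{\diamondsuit}(w)=U_-(z)U_+(z)U_-(w)U_+(w)$ into normal order is to swap the middle factor $U_+(z)U_-(w)$.

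The commutator $[L_+(z),L_-(w)]$ is a scalar that I compute using (\ref{jkjmkcomm}):
\begin{equation*}
[L_+(z),L_-(w)]=\sum_{k\geqslant 1}\frac{1}{k^2}(z/w)^k[J_k,J_{-k}]
=\sum_{k\geqslant 1}\frac{1}{k}(z/w)^k\,\frac{1-v^{nk}}{1-v^k}.
\end{equation*}
Using the telescoping identity $\frac{1-v^{nk}}{1-v^k}=\sum_{j=0}^{n-1}v^{jk}$, this becomes
\begin{equation*}
[L_+(z),L_-(w)]=\sum_{j=0}^{n-1}\sum_{k\geqslant 1}\frac{1}{k}(v^jz/w)^k=-\sum_{j=0}^{n-1}\log(1-v^jz/w),
\end{equation*}
where the series converge provided $|v^jz/w|<1$ for every $0\leqslant j\leqslant n-1$; this is the precise meaning of ``$|z|/|w|$ sufficiently small.'' Exponentiating gives the constant $\prod_{j=0}^{n-1}(1-v^jz/w)^{-1}$.

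Since $[L_+(z),L_-(w)]$ is central (a scalar), the BCH formula in its simplest form yields $e^{L_+(z)}e^{L_-(w)}=e^{[L_+(z),L_-(w)]}\,e^{L_-(w)}e^{L_+(z)}$, i.e.
\begin{equation*}
U_+(z)U_-(w)=\prod_{j=0}^{n-1}\frac{1}{1-v^jz/w}\,U_-(w)U_+(z).
\end{equation*}
Substituting into $U^{\diamondsuit}(z)U^{\diamondsuit}(w)=U_-(z)\bigl(U_+(z)U_-(w)\bigr)U_+(w)$ and then using the commutativity of $U_-(z)$ with $U_-(w)$ to reassemble the factors gives exactly the right-hand side $\prod_{j=0}^{n-1}(1-v^jz/w)^{-1}\,U_-(z)U_-(w)U_+(z)U_+(w)=\prod_{j=0}^{n-1}(1-v^jz/w)^{-1}\,{:\!U^{\diamondsuit}(z)U^{\diamondsuit}(w)\!:}$.

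There is no real obstacle here; the only point requiring care is the convergence, namely specifying the domain on which the geometric series $\sum_{k\geqslant 1}\frac{1}{k}(v^jz/w)^k$ converge simultaneously for $j=0,\ldots,n-1$. The resulting scalar identity then extends as an identity of rational functions of $z/w$ with poles only on the lines $z=v^{-j}w$, fitting into the locality framework described earlier in this section.
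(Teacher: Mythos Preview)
Your proof is correct and follows essentially the same approach as the paper: compute the scalar commutator $[L_+(z),L_-(w)]$ via the Heisenberg relations, recognize it as $-\sum_{j=0}^{n-1}\log(1-v^jz/w)$, and apply the Baker--Campbell--Hausdorff formula. Your version is somewhat more explicit about which reordering is needed and about the convergence condition, but the argument is the same.
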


\begin{proof}
  We have
  \[ [L_+ (z), L_- (w)] = \sum_{k = 1}^{\infty} \frac{1}{k^2} k \frac{v^{nk} -
     1}{v^k - 1} \left( \frac{z}{w} \right)^k = - \sum_{j = 0}^{n - 1} \log (1
     - v^j z / w), \]
  so by the Baker-Campbell-Hausdorff formula we have
  \[ U^{\diamondsuit} (z) U^{\diamondsuit} (w) = \prod_{j = 0}^{n - 1}
     \frac{1}{1 - v^j z / w} {:\mathrel{ U^{\diamondsuit} (z) U^{\diamondsuit} (w)}:} \qedhere \]
\end{proof}

We may take (\ref{udiamondno}) as giving meaning to $U^{\diamondsuit} (z)
U^{\diamondsuit} (w)$ for all $z, w$ except at the poles of the denominator.
Then naturally $U (z) U (w)$ may be defined to be $U_0 (z) U_0 (w)
U^{\diamondsuit} (z) U^{\diamondsuit} (w)$ where $U_0 (z) = Q z^{J_0}$. (Note
that $U_0 (w)$ commutes with $U^{\diamondsuit} (z)$.)

\begin{proposition}
  \label{ulocalcr}We have
  \[ U (z) U (w) = \left( \prod_{j = 0}^{n - 1} \frac{z - v^j w}{w - v^j z}
     \right) U (w) U (z) . \]
\end{proposition}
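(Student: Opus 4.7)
\textbf{Proof proposal for Proposition~\ref{ulocalcr}.} The plan is to reduce the statement to Proposition~\ref{diamondnoprop} by bookkeeping of the two auxiliary factors $S$ and $z^{J_0}$ that distinguish $U(z)$ from $U^\diamondsuit(z)$. First I would record the elementary commutation relations between these factors and $U^\diamondsuit$. Since $J_k$ for $k\neq 0$ changes one index $i\mapsto i-nk$ while $S$ shifts every index by $n$, a direct check on the generators $u_{\mathbf{i}}$ shows $J_k S = S J_k$ for $k\neq 0$, and therefore $U^\diamondsuit(w) S = S\,U^\diamondsuit(w)$. On the other hand $J_0$ acts on $\mathfrak{F}_m$ as the scalar $m$ while $S$ raises the level by $n$, so $[J_0,S]=nS$ and equivalently $z^{J_0}S = z^n S\,z^{J_0}$.

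Next I would apply both sides to an arbitrary $|v\rangle\in\mathfrak{F}_m$ and move every $S$ and $z^{J_0}$, $w^{J_0}$ to the outside. Using the relations above,
\begin{equation*}
  U(z)U(w)|v\rangle = z^{m+n}w^{m}\,S^{2}\,U^{\diamondsuit}(z)U^{\diamondsuit}(w)|v\rangle,
\end{equation*}
and by the same calculation with $z$ and $w$ interchanged,
\begin{equation*}
  U(w)U(z)|v\rangle = w^{m+n}z^{m}\,S^{2}\,U^{\diamondsuit}(w)U^{\diamondsuit}(z)|v\rangle.
\end{equation*}
The prefactors contribute the scalar $z^{n}/w^{n}$ to the ratio $U(z)U(w)/U(w)U(z)$, while the ratio of the $U^\diamondsuit$-products is controlled by Proposition~\ref{diamondnoprop}: both $U^\diamondsuit(z)U^\diamondsuit(w)$ and $U^\diamondsuit(w)U^\diamondsuit(z)$ equal the (symmetric) normal-ordered product $:\!U^{\diamondsuit}(z)U^{\diamondsuit}(w)\!:$ up to the scalars $\prod_{j=0}^{n-1}(1-v^{j}z/w)^{-1}$ and $\prod_{j=0}^{n-1}(1-v^{j}w/z)^{-1}$ respectively.

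Finally I would combine the two contributions. Clearing denominators factor by factor gives the algebraic identity
\begin{equation*}
  \frac{1-v^{j}w/z}{1-v^{j}z/w}=\frac{w}{z}\cdot\frac{z-v^{j}w}{w-v^{j}z},
\end{equation*}
and multiplying over $j=0,\dots,n-1$ produces an extra $w^{n}/z^{n}$ which precisely cancels the $z^{n}/w^{n}$ coming from the shift operators. What remains is exactly $\prod_{j=0}^{n-1}(z-v^{j}w)/(w-v^{j}z)$, as desired. There is no real obstacle beyond keeping the level-shift bookkeeping straight; the statement is an analytic continuation identity, so it suffices to establish it in the domain where both sides of Proposition~\ref{diamondnoprop} converge, and as in Remark~\ref{noadvantages} matrix coefficients $\langle\mu|U(z)U(w)|\lambda\rangle$ are meaningful there.
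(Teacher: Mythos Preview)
Your argument is correct and follows essentially the same route as the paper's proof: both separate out the factors $S z^{J_0}$ from $U^\diamondsuit$, use the commutation relations $[J_k,S]=0$ for $k\neq 0$ and $z^{J_0}S=z^nSz^{J_0}$ to produce the scalar $(z/w)^n$, and then invoke Proposition~\ref{diamondnoprop} for the $U^\diamondsuit$ part so that the extra $(w/z)^n$ cancels. The only difference is cosmetic: the paper keeps the computation at the operator level by writing $U_0(z)U_0(w)=(z/w)^nU_0(w)U_0(z)$, whereas you apply everything to a vector in $\mathfrak{F}_m$ and track the scalar $z^{m+n}w^m$ explicitly.
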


\begin{proof}
  By Proposition~\ref{diamondnoprop},
  \begin{equation}
    \label{diamondcommute} U^{\diamondsuit} (z) U^{\diamondsuit} (w) = \left(
    \frac{w}{z} \right)^n  \prod_{j = 0}^{n - 1} \frac{z - v^j w}{w - v^j z}
    U^{\diamondsuit} (w) U^{\diamondsuit} (z) .
  \end{equation}
  On the other hand $J_0$ and $Q$ commute with $J_k$ if $k \neq 0$ while
  $[J_0, Q] = nQ$. We have
  \[ z^{J_0} Qz^{- J_0} = e^{\log (z) J_0} Qe^{- \log (z) J_0} = z^n Q, \]
  so
  \[ U_0 (z) U_0 (w) = z^n Q^2 z^{J_0} w^{J_0} = \left( \frac{z}{w} \right)^n
     U_0 (w) U_0 (z) . \]
  The statement follows.
\end{proof}

We may now discuss the effect of the factor $U_0 (z)$ in this definition. If
we had omitted it we would have had locality relation, but the factor $S (w /
z)$ would have had to include the $(w / z)^n$ that appears in
(\ref{diamondcommute}). By including the factor $U_0 (z)$ in the definition of
$U (z)$, we are able to eliminate the pole at $z = 0$. The resulting $\phi (z
/ w)$ in the locality propery (\ref{generalcommute-phi}) for $U (z)$ is then
$\prod_{j = 0}^{n - 1} (z - v^j w) / (w - v^j z)$.
\end{example}

\bibliographystyle{habbrv}
\bibliography{hamiltonians.bib}

\end{document}